\def\section{\@startsection{section}{1}%
	\z@{.7\linespacing\@plus\linespacing}{.5\linespacing}%
	{\bfseries%\normalfont\scshape
		\centering
}}
\def\@secnumfont{\bfseries}
\newcommand\reallywidehat[1]{%
	\savestack{\tmpbox}{\stretchto{%
			\scaleto{%
				\scalerel*[\widthof{\ensuremath{#1}}]{\kern-.6pt\bigwedge\kern-.6pt}%
				{\rule[-\textheight/2]{1ex}{\textheight}}%WIDTH-LIMITED BIG WEDGE
			}{\textheight}%
		}{0.5ex}}%
	\stackon[1pt]{#1}{\tmpbox}%
}
\newtheorem{theorem}{Theorem}[section]
\newaliascnt{lemma}{theorem}
\newtheorem{lemma}[lemma]{Lemma}
\newaliascnt{proposition}{theorem}
\newaliascnt{assumption}{theorem}
\newtheorem{assumption}[assumption]{Assumption}
\newaliascnt{corollary}{theorem}
\newaliascnt{definition}{theorem}
\newtheorem{definition}[definition]{Definition}
\newaliascnt{example}{theorem}
\newtheorem{example}[example]{Example}
\newaliascnt{remark}{theorem}
\newtheorem{remark}[remark]{Remark}
\newaliascnt{hypothesis}{theorem}
\newaliascnt{property}{theorem}
\let\originalleft\left
\let\originalright\right
\renewcommand{\left}{\mathopen{}\mathclose\bgroup\originalleft}
\renewcommand{\right}{\aftergroup\egroup\originalright}
\newcommand{\doublewidetilde}[1]{{%
		\mathpalette\double@widetilde{#1}%
}}
\newcommand{\double@widetilde}[2]{%
	\sbox\z@{$\m@th#1\widetilde{#2}$}%
	\ht\z@=.9\ht\z@
	\widetilde{\box\z@}%
}
\renewcommand{\d}{\/\mathrm{d}\/}
\def\u{\textbf{X}^{n, \varepsilon}}
\def\n{\textbf{X}_{\theta_n}}
\def\w{\textbf{W}^{\varepsilon}_{{\theta}^{\varepsilon}}}
\def\L{\mathrm{L}}
\def\A{\mathrm{A}}
\def\F{\mathrm{F}}
\def\C{\mathrm{C}}
\def\h{\mathbf{h}}
\def\J{\mathrm{J}}
\def\B{\mathrm{B}}
\def\D{\mathrm{D}}
\def\y{\mathbf{y}}
\def\X{\mathbb{X}}
\def\z{\mathbf{z}}
\def\v{\mathbf{v}}
\def\V{\mathbb{V}}
\def\w{\mathbf{w}}
\def\W{\mathrm{W}}
\def\G{\mathbb{G}}
\def\no{\nonumber}
\def\V{\mathbb{V}}
\def\U{\mathrm{U}}
\def\u{\mathbf{u}}
\def\H{\mathbb{H}}
\def\n{\mathbf{n}}
\def\p{\mathbf{p}}
\newcommand{\R}{\mathbb{R}}
\renewcommand{\d}{\/\mathrm{d}\/}
\newcommand{\Addresses}{{% additional braces for segregating \footnotesize
		\footnote{
			%	\footnotesize

			\noindent \textsuperscript{1}School of Mathematics,
			Indian Institute of Science Education and Research, Trivandrum (IISER-TVM),
			Maruthamala PO, Vithura, Thiruvananthapuram, Kerala, 695 551, INDIA.  \par\nopagebreak \noindent
			\textit{e-mail:} \texttt{tania9114@iisertvm.ac.in}

			\noindent \textsuperscript{2}School of Mathematics, Indian Institute of Science Education and Research, Trivandrum (IISER-TVM),
			Maruthamala PO, Vithura, Thiruvananthapuram, Kerala, 695 551, INDIA  \par\nopagebreak \noindent
			\textit{e-mail:} \texttt{sheetal@iisertvm.ac.in}

			\noindent \textsuperscript{3}Department of Mathematics, Indian Institute of Technology (IIT), Roorkee,
			Haridwar Highway, Roorkee,  Uttarakhand 247667, INDIA \par\nopagebreak
			\noindent  \textit{e-mail:} \texttt{maniltmohan@gmail.com, manil$\_$vs@isibang.ac.in}
			
			\noindent \textsuperscript{*}Corresponding author.

			\medskip\noindent
			{\bf Acknowledgments:}  Tania Biswas  would like to thank the  Indian Institute of Science Education and Research, Thiruvananthapuram, for providing financial support and stimulating environment for the research. M. T. Mohan would like to thank the Department of Science and Technology (DST), India for Innovation in Science Pursuit for Inspired Research (INSPIRE) Faculty Award (IFA17-MA110) and Indian Institute of Technology (IIT), Roorkee, for providing stimulating scientific environment and resources.
			
}}}
\begin{document}
	
	\title[Pontryagin Maximun Principle and Second order optimality conditions]{Pontryagin Maximun Principle and Second order optimality conditions for optimal control problems governed by 2D nonlocal Cahn-Hillard-Navier-Stokes equations	\Addresses	}

	\author[T. Biswas, S. Dharmatti and M. T. Mohan ]
	{Tania Biswas\textsuperscript{1}, Sheetal Dharmatti\textsuperscript{2*} and Manil T. Mohan\textsuperscript{3}} 
\maketitle

	\begin{abstract}
		In this paper, we formulate a distributed optimal control problem related to the evolution of two isothermal, incompressible, immisible fluids in a two dimensional  bounded domain. The distributed optimal control problem is framed as the minimization of a suitable cost functional subject to the controlled nonlocal Cahn-Hilliard-Navier-Stokes equations.   We describe the first order necessary conditions of optimality via Pontryagin minimum principle and prove second order necessary and sufficient conditions of optimality for the problem.
	\end{abstract}

	\keywords{\textit{Key words:} optimal control, nonlocal Cahn-Hilliard-Navier-Stokes systems, Pontryagin maximum principle, necessary and sufficient optimality conditions. }
	
	Mathematics Subject Classification (2010): 49J20, 35Q35, 76D03.

\section{Introduction}\label{sec1}\setcounter{equation}{0}
 We consider the evolution of two isothermal, incompressible, immiscible fluids in a bounded domain $ \Omega\subset\R^2 $ or $ \R^3$.   The  average velocity of the fluid  is denoted by $\u(x,t)$ and the relative concentration of one fluid  is denoted by $\varphi(x,t),$ for $(x,t)\in\Omega\times(0,T)$. A general model for such a system is known as \emph{nonlocal Cahn-Hilliard-Navier-Stokes (CHNS) system}  $ \text{ in } \ \Omega\times (0,T)$  and is given by 
\begin{equation}\label{1.1}
\left\{
\begin{aligned}
\varphi_t + \u\cdot \nabla \varphi &=\text{ div} (m(\varphi) \nabla \mu), \ \text{ in } \  \Omega \times (0,T),\\
\mu &= a \varphi - \J\ast \varphi + \F'(\varphi),\\
\u_t - 2 \text{div } ( \nu(\varphi) \mathrm{D}\u ) + (\u\cdot \nabla )\u + \nabla \uppi &= \mu \nabla \varphi + \mathbf{h}, \ \text{ in } \ \Omega \times (0,T),\\
\text{div }\u&= 0, \ \text{ in } \  \Omega \times (0,T), \\
\frac{\partial \mu}{\partial\mathbf{n}} &= 0 \ , \u=0 \ \text{ on } \ \partial \Omega \times (0,T),\\
\u(0) &= \u_0, \  \ \varphi(0) = \varphi _0 \ \text{ in } \ \Omega, 
\end{aligned}   
\right.
\end{equation}
where
$m$ is the mobility parameter, $\mu$ is the \emph{chemical potential}, $\uppi$ is the \emph{pressure}, $\J$ is the \emph{spatial-dependent internal kernel}, $\J \ast \varphi$ denotes the spatial convolution over $\Omega$, $a$ is defined by $a(x) := \int _\Omega \J(x-y) \d y$, $\F$ is a double well potential, $\nu$ is the \emph{kinematic viscosity} and $\mathbf{h}$ is the external forcing term acting in the mixture.  (see \cite{unique}). Also, $\mathrm{D}\u$ is the symmetric part of the gradient of the flow velocity vector, i.e., $\mathrm{D}\u$ is the strain tensor $\frac{1}{2}\left(\nabla\u+(\nabla\u)^{\top}\right)$. The chemical potential $\mu$ is the first variation of the functional:
\begin{equation*}
\mathcal{E}(\varphi) = f(\varphi):= \frac{1}{4}\int_\Omega \int_\Omega \mathrm{J}(x-y)(\varphi(x)-\varphi(y))^2\d x  \d y + \int_\Omega \mathrm{F}(\varphi(x))\d x.
\end{equation*} The density is supposed to be constant and is equal to one (i.e., matched densities). The system (\ref{1.1}) is  called nonlocal because of the term $\J$, which is averaged over the spatial domain. Various simplified models of this system are studied by several mathematicians and physicists.   The local version of the system  is obtained by replacing  $\mu $ equation by 
$ \mu = \Delta \varphi + \F' (\varphi) $.  Another simplification appeared in the literature is to assume that the constant mobility parameter and/or constant viscosity.
From the mathematical point of view, the nonlocal version is physically more relevant and mathematically challenging too. This model is more difficult to handle because of the nonlinear terms like the \emph{capillarity term} (i.e., \emph{Korteweg force}) $\mu \nabla \varphi$ acting on the fluid.  Even in two dimensions, this term can be less regular than the convective term $(\u\cdot\nabla)\u$ (see \cite{weak}).

We now discuss some of the works available in literature for the solvability of the system (\ref{1.1}) and also the  simplified Cahn-Hilliard-Navier-Stokes models. In \cite{exist local chns}, author studies the local Cahn-Hilliard-Navier-Stokes system and establishes the existence of weak solutions in dimensions 2 and 3. The existence and uniqueness of strong solutions in 2 and 3 dimensions  (global in 2D and local in time for 3D) is also established in \cite{exist local chns}. The authors in \cite{weak} proved the existence of a weak solution for nonlocal Cahn-Hilliard-Navier-Stokes system with mobility parameter  equal to one and variable viscosity. The uniqueness of weak solution for such systems remained open until  2016 and the authors in \cite{unique} resolved it for dimension 2. The authors in  \cite{unique} also considered the case of nonlocal systems with variable mobility parameter and viscosity coefficient under certain assumptions on the kernel $\J$.  The existence of a unique strong solution in two dimensions for the nonlocal system  with constant viscosity and mobility parameter equal to 1 is proved in \cite{strong} and the authors showed that any weak solution regularises in finite time uniformly with respect to bounded sets of initial data.  As in the case of 3D Navier-Stokes, in three dimensions, the existence of a weak solution is known (see \cite{weak}), but the uniqueness of weak solution  for the nonlocal Cahn-Hilliad-Navier-Stokes system still remains open. 

Optimal  control theory of fluid dynamic models has been one of the far-reaching areas of applied mathematics with  several engineering  applications (see for example \cite{sritharan,fursikov,gunzburger}). Controlling fluid flow and turbulence inside a flow in a given physical domain, with known initial data and by various means, for example, body forces, boundary values, temperature (cf. \cite{FART,raymond2} etc), is an interesting problem in fluid mechanics. The mathematical developments in infinite dimensional nonlinear system theory and partial differential equations in the past several decades, opened up a new window for the optimal control theory of Navier-Stokes equations. Such problems are extensively addressed  in \cite{sritharan,fursikov,gunzburger,FART,lions,raymond,SDMTS} etc. 

The optimal control problem for the Cahn-Hilliard system \cite{optimal control for ch eq,optimal control for ch eq with state constraints} and optimal control problem for Cahn-Hilliard system with dynamic boundary control \cite{optimal1} are available in the literature. In \cite{analysis,double}, authors discuss about the control problems related to the phase field system of Cahn-Hilliard type. Optimal control of time discrete two phase flow is studied in \cite{num3} and topological optimization for phase field models is studied in \cite{num2}. Turning to the local Cahn-Hilliard-Navier-Stokes  equations, an optimal control problem with state constraint and robust control are investigated in \cite{robust,state const}, respectively. An optimal distributed control of a diffuse interface model of tumor growth is considered in \cite{tumor}. The model studied in \cite{tumor} is  a kind of local Cahn-Hilliard type system. Optimal control problems of semi discrete Cahn-Hilliard-Navier-Stokes system for various cases like distributed and boundary control, with non smooth Ginzburg-Landau energies and with non matched fluid densities are studied in \cite{num4,num5,num1}. These works considered the local Cahn-Hilliard-Navier-Stokes equations for their numerical studies. 

The first order necessary conditions of optimality for various optimal control problems governed by nonlocal Cahn-Hilliad-Navier-Stokes system has been established in \cite{ControlCHNS,SFMG,BDM2}, etc. In \cite{BDM2}, the authors have studied a distributed optimal control problem for nonlocal Cahn-Hilliard-Navier-Stokes system and established Pontryagin’s maximum principle using Ekeland's Variational Principle. They have also studied a initial value optimization problem. An optimal distributed control problem for the two-dimensional nonlocal Cahn-Hilliard-Navier-Stokes systems with degenerate mobility and singular potential is studied in \cite{SFMG}. A distributed optimal control problem for the nonlocal Cahn-Hilliard-Navier-Stokes system with non-constant viscosity and regular potential  is examined in \cite{ControlCHNS}. 
The second order optimality condition for 3D Navier-Stokes equations in a  periodic domain is established in \cite{LijuanPezije}. The second order optimality condition for various optimization problems governed by Navier-Stokes equations is obtained in \cite{CT,TW,WD}, etc.

In this paper, we are studying a distributed optimal control problem related to \eqref{1.1} with constant viscosity and mobility parameter $m=1$. We give a systematic approach to the mathematical formulation of the optimal control problem  and resolve the problem of minimizing total energy.  We consider two dimensional fluid flows, since the nonlocal Cahn-Hilliard-Navier-Stokes equations are not known to be well-posed in three dimensions.  We  establish Pontryagin maximum principle for the controlled nonlocal Cahn-Hilliard-Navier-Stokes system. The unique global strong solution of the system (\ref{nonlin phi})-(\ref{initial conditions}) (see below) established in \cite{strong} helps us to achieve this goal. Then we prove second order necessary and sufficient conditions of optimality for the optimal control problem. The coupling in the system (\ref{1.1}) makes the problem mathematically challenging and harder to resolve than that of the corresponding problem for the Navier-Stokes and Cahn-Hilliard systems.

The paper is organized as follows: In the next section, we discuss the functional setting for the unique  solvability of the system (\ref{nonlin phi})-(\ref{initial conditions}) (see below). We also state the existence and uniqueness of a weak as well as strong solution of the system in the same section.  Then we state the unique solvability of the  linearized system, which we have proved in \cite{BDM2}. In the section \ref{se4}, an optimal control problem is formulated as the minimization of a suitable cost functional (the sum of total energy and total effort by controls). We first prove the existence of an optimal control (see Theorem \ref{optimal}) and then establish  the Pontryagin maximum principle (see Theorem \ref{main}), which gives the first-order necessary optimality conditions for the associated optimal control problem. We characterize the optimal control using the adjoint system. In the section \ref{se5}, we have obtained the second order necessary and sufficient optimality condition for the optimal control problem (see Theorems \ref{necessary} and \ref{sufficient}).

\section{Mathematical Formulation}\label{sec2}
 In this section, we mathematically formulate the two dimensional Cahn-Hilliard-Navier-Stokes system and discuss the necessary function spaces required to obtain the global solvability results for such systems. We mainly follow the papers \cite{unique,weak} for the mathematical formulation and functional setting. 
\subsection{\bf Governing Equations}
A well known model which describes the evolution of an incompressible isothermal mixture of two immiscible fluids is governed by  \emph{Cahn-Hilliard-Navier-Stokes system} (see \cite{weak}).  
We consider the following controlled Cahn-Hilliard-Navier-Stokes system:
\begin{subequations}
	\begin{align}
	\varphi_t + \u\cdot \nabla \varphi &= \Delta \mu, \label{nonlin phi}\ \text{ in }\ \Omega\times(0,T),\\
	\mu &= a \varphi - \J\ast \varphi + \F'(\varphi), \label{mu}\ \text{ in }\ \Omega\times(0,T),\\
	\u_t - \nu \Delta \u + (\u\cdot \nabla )\u + \nabla \uppi &= \mu \nabla \varphi + \mathbf{h} +\U, \label{nonlin u}\ \text{ in }\ \Omega\times(0,T),\\
	\text{div }\u&= 0, \ \text{ in }\ \Omega\times(0,T), \label{div zero}\\
	\frac{\partial \mu}{\partial\mathbf{n}} &= 0 \ , \u=0 \ \text{ on } \ \partial \Omega \times (0,T),\label{boundary conditions}\\
	\u(0) &= \u_0, \  \ \varphi(0) = \varphi _0 \ \text{ in } \ \Omega, \label{initial conditions}
	\end{align}   
\end{subequations}
where $\Omega \subset \mathbb{R}^2$ is a bounded domain with sufficiently smooth boundary and $\mathbf{n}$ is the unit outward normal to the boundary $\partial\Omega$. In the system (\ref{nonlin phi})-(\ref{initial conditions}),  $\U$ is the \emph{distributed control} acting on the system.

\subsection{\bf Functional Setting}
Let us introduce the following functional spaces required for getting the unique global solvability results of the system (\ref{nonlin phi})-(\ref{initial conditions}).
\begin{align*}
\G_{\text{div}} &:= \Big\{ \u \in \mathrm{L}^2(\Omega;\R^2) : \text{div }\u=0,\  \u\cdot \mathbf{n}\big|_{\partial\Omega}=0 \Big\}, \\
\V_{\text{div}} &:=\Big\{\u \in \mathrm{H}^1_0(\Omega;\R^2): \text{div }\u=0\Big\},\\ \mathrm{H} &:=\mathrm{L}^2(\Omega;\R),\ \mathrm{V}:=\mathrm{H}^1(\Omega;\R). \end{align*}
Let us denote $\| \cdot \|$ and $(\cdot, \cdot)$ the norm and the scalar product, respectively, on both $\mathrm{H}$ and $\G_{\text{div}}$. The duality between any Hilbert space $\X$ and its dual $\X'$ will be denoted by $\left<\cdot,\cdot\right>$. We know that $\V_{\text{div}}$ is endowed with the scalar product 
$$(\u,\v)_{\V_{\text{div}} }= (\nabla \u, \nabla \v)=2(\mathrm{D}\u,\mathrm{D}\v),\ \text{ for all }\ \u,\v\in\V_{\text{div}}.$$ The norm on $\V_{\text{div}}$ is given by $\|\u\|_{\V_{\text{div}}}^2:=\int_{\Omega}|\nabla\u(x)|^2\d x=\|\nabla\u\|^2$. Since $\Omega$ is bounded, the embedding of $\mathbb{V}_{\text{div}}\subset\G_{\text{div}}\equiv\G_{\text{div}}'\subset\V_{\text{div}}'$ is compact (see \cite{Te}). In the sequel, we use the notations $\mathbb{H}^2(\Omega):=\mathrm{H}^2(\Omega;\mathbb{R}^2)$ and  $\mathrm{H}^2(\Omega):=\mathrm{H}^2(\Omega;\mathbb{R})$ for second order Sobolev spaces. 
\subsection{\bf Linear and Nonlinear Operators}
Let us define the Stokes operator $\A : \D(\A)\cap  \G_{\text{div}} \to \G_{\text{div}}$ by 
$$\A=-\mathrm{P} \Delta,\ \D(\A)=\mathbb{H}^2(\Omega) \cap \V_{\text{div}},$$ where $\mathrm{P} : \mathbb{L}^2(\Omega) \to \G_{\text{div}}$ is the \emph{Helmholtz-Hodge orthogonal projection}. Note also that, we have
\begin{equation*}
\langle\A\u, \v\rangle = (\u, \v)_{\V_\text{div}} = (\nabla\u, \nabla\v),  \text{ for all } \u \in\D(\A), \v \in \V_{\text{div}}.
\end{equation*}
It should also be noted that  $\A^{-1} : \G_{\text{div}} \to \G_{\text{div}}$ is a self-adjoint compact operator on $\G_{\text{div}}$ and by
the classical \emph{spectral theorem}, there exists a sequence $\lambda_j$ with $0<\lambda_1\leq \lambda_2\leq \lambda_j\leq\cdots\to+\infty$
and a family  $\mathbf{e}_j \in \D(\A)$ of eigenvectors is orthonormal in $\G_\text{div}$ and is such that $\A\mathbf{e}_j =\lambda_j\mathbf{e}_j$. We know that $\u$ can be expressed as $\u=\sum\limits_{j=1}^{\infty}\langle \u,\mathbf{e}_j\rangle \mathbf{e}_j,$ so that $\A\u=\sum\limits_{j=1}^{\infty}\lambda_j\langle \u,\mathbf{e}_j\rangle \mathbf{e}_j$. Thus, it is immediate that 
\begin{equation}
\|\nabla\u\|^2=\langle \A\u,\u\rangle =\sum_{j=1}^{\infty}\lambda_j|\langle \u,\mathbf{e}_j\rangle|^2\geq \lambda_1\sum_{j=1}^{\infty}|\langle \u,\mathbf{e}_j\rangle|^2=\lambda_1\|\u\|^2,
\end{equation} 
which is the \emph{Poincar\'e inequality}.
For $\u,\v,\w \in \V_{\text{div}}$ we define the trilinear operator $b(\cdot,\cdot,\cdot)$ as
$$b(\u,\v,\w) = \int_\Omega (\u(x) \cdot \nabla)\v(x) \cdot \w(x)\d x=\sum_{i,j=1}^2\int_{\Omega}u_i(x)\frac{\partial v_j(x)}{\partial x_i}w_j(x)\d x,$$
and the bilinear operator $\B$ from $\V_{\text{div}} \times \V_{\text{div}} $ into $\V_{\text{div}}'$ defined by,
$$ \langle \B(\u,\v),\w  \rangle := b(\u,\v,\w), \  \text{ for all } \ \u,\v,\w \in \V_\text{{div}}.$$
An integration by parts yields, 
\begin{equation*}
\left\{
\begin{aligned}
b(\u,\v,\v) &= 0, \ \text{ for all } \ \u,\v \in\V_\text{{div}},\\
b(\u,\v,\w) &=  -b(\u,\w,\v), \ \text{ for all } \ \u,\v,\w\in \V_\text{{div}}.
\end{aligned}
\right.\end{equation*}
For more details about the linear and nonlinear operators, we refer the readers to \cite{Te}. Now, we give some important inequalities which are used in the rest of the paper. 
\begin{lemma}[Gagliardo-Nirenberg Inequality, Theorem 2.1, \cite{ED}] \label{gn}
	Let $\Omega\subset\R^n$ and $\u\in\mathrm{W}^{1,p}(\Omega;\R^n),p\geq 1$. Then for any fixed number $p,q\geq 1$, there exists a constant $C>0$ depending only on $n,p,q$ such that 
	\begin{equation}\label{2.3}
	\|\u\|_{\mathbb{L}^r}\leq C\|\nabla\u\|_{\mathbb{L}^p}^{\theta}\|\u\|_{\mathbb{L}^q}^{1-\theta},\;\theta\in[0,1],
	\end{equation}
	where the numbers $p, q, r$ and $\theta$ satisfy the relation
	$$\theta=\left(\frac{1}{q}-\frac{1}{r}\right)\left(\frac{1}{n}-\frac{1}{p}+\frac{1}{q}\right)^{-1}.$$
\end{lemma}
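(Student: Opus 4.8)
Although the inequality \eqref{2.3} is quoted verbatim from \cite{ED}, here is how I would argue it. The plan is to reduce the vector-valued statement to the scalar borderline Sobolev embedding $\mathrm{W}^{1,1}(\R^n)\hookrightarrow\mathrm{L}^{n/(n-1)}(\R^n)$, then upgrade this to the full Sobolev scale by applying it to powers of $|\u|$, and finally reach an arbitrary target exponent $r$ by interpolating against $\|\u\|_{\mathbb{L}^q}$ via H\"older's inequality. It is worth noting first that the value of $\theta$ is forced rather than chosen: testing the desired estimate on the rescalings $x\mapsto\u(\lambda x)$, $\lambda>0$, and demanding that it survive both $\lambda\to 0$ and $\lambda\to\infty$, one is led to the homogeneity identity $-\frac{n}{r}=\theta(1-\frac{n}{p})-(1-\theta)\frac{n}{q}$, which rearranges precisely into the stated relation $\theta=(\frac1q-\frac1r)(\frac1n-\frac1p+\frac1q)^{-1}$; so the substance of the lemma is that with this single admissible value of $\theta\in[0,1]$ the inequality indeed holds.

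Concretely, I would proceed as follows. First, for $g\in\mathrm{C}_c^1(\R^n)$ I would write $g$ at each point as the integral of $\partial_i g$ along the $i$-th coordinate line and iterate the generalized H\"older inequality over the remaining $n-1$ variables (the Loomis--Whitney argument), obtaining $\|g\|_{\mathrm{L}^{n/(n-1)}}\le\prod_{i=1}^{n}\big(\int_{\R^n}|\partial_i g|\,\d x\big)^{1/n}\le\|\nabla g\|_{\mathrm{L}^1}$, and then pass to $\mathrm{W}^{1,1}$ by density. Next, applying this to $g=|v|^{\gamma}$ with $\gamma=\frac{p(n-1)}{n-p}$ (for $1\le p<n$) and using H\"older's inequality on the resulting term $\gamma\big\||v|^{\gamma-1}\nabla v\big\|_{\mathrm{L}^1}$, a common power of $\|v\|_{\mathrm{L}^{p^*}}$ cancels and one is left with the homogeneous Sobolev inequality $\|v\|_{\mathrm{L}^{p^*}}\le C\|\nabla v\|_{\mathrm{L}^p}$, where $p^*=\frac{np}{n-p}$. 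Then, for $r$ lying between $q$ and $p^*$, H\"older's interpolation inequality gives $\|v\|_{\mathrm{L}^r}\le\|v\|_{\mathrm{L}^{p^*}}^{\alpha}\|v\|_{\mathrm{L}^q}^{1-\alpha}$ with $\frac1r=\frac{\alpha}{p^*}+\frac{1-\alpha}{q}$, and inserting the Sobolev bound and solving for $\alpha$ one checks that $\alpha$ is exactly the $\theta$ above. The borderline cases $p=n$ and $p>n$ I would treat separately, replacing the endpoint embedding respectively by $\mathrm{W}^{1,n}\hookrightarrow\mathrm{L}^s$ for all finite $s$ and by Morrey's embedding $\mathrm{W}^{1,p}\hookrightarrow\mathrm{C}^{0,1-n/p}$. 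Finally, the vector-valued inequality follows by applying the scalar one to $|\u|$ together with the pointwise bound $\big|\nabla|\u|\big|\le|\nabla\u|$, and the passage from $\R^n$ to the bounded domain $\Omega$ is made through a Sobolev extension operator $\mathrm{W}^{1,p}(\Omega)\to\mathrm{W}^{1,p}(\R^n)$, which is available since $\partial\Omega$ is smooth, at the cost of letting $C$ depend on $\Omega$.

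The hard part will be the combinatorial bookkeeping in the Loomis--Whitney step: the $n$ factors have to be peeled off one at a time, each time applying H\"older in exactly the right subset of the remaining variables, and getting this right is where the argument really lives. A secondary obstacle is the case analysis according to whether $p<n$, $p=n$, or $p>n$, since both the relevant endpoint embedding and the admissible range of $r$ change with the position of $p$ relative to $n$. I also expect a minor nuisance in transferring from $\R^n$ to $\Omega$: the genuinely scale-invariant inequality lives on $\R^n$, and after extension the estimate keeps the stated homogeneous form but its constant is no longer dimensionally universal. The degenerate endpoints $\theta=0$ (then $r=q$ and \eqref{2.3} is trivial) and $\theta=1$ (the pure Sobolev inequality) require no separate argument.
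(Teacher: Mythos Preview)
Your argument is sound and follows the classical route to Gagliardo--Nirenberg: the Loomis--Whitney iteration yields the borderline embedding $\mathrm{W}^{1,1}\hookrightarrow\mathrm{L}^{n/(n-1)}$, the substitution $g=|v|^{\gamma}$ bootstraps this to the full Sobolev inequality, and H\"older interpolation between $\mathrm{L}^{p^*}$ and $\mathrm{L}^q$ closes the loop with the stated value of $\theta$. The scaling check, the case split at $p=n$, the reduction from vector-valued to scalar via $\big|\nabla|\u|\big|\le|\nabla\u|$, and the extension step from $\Omega$ to $\R^n$ are all handled correctly.

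There is nothing to compare against, however: the paper does not prove Lemma~\ref{gn} but simply quotes it from \cite{ED} (Theorem~2.1 there) as a known tool, used later only through its two-dimensional consequences (Ladyzhenskaya's inequality and the estimate $\|\u\|_{\mathbb{L}^r}\le C\|\nabla\u\|^{1-2/r}\|\u\|^{2/r}$). Your final remark that the extension forces $C$ to depend on $\Omega$ is well taken and in fact flags a slight imprecision in the lemma as stated; for the applications in the paper this is harmless, since $\Omega$ is fixed throughout.
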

A particular case of Lemma \ref{gn} is the well known inequality due to Ladyzhenskaya (see Lemma 1 and 2, Chapter 1, \cite{OAL}), which is true even in unbounded domains and, is given below:
\begin{lemma}[Ladyzhenskaya Inequality]\label{lady}
	For $\u\in\ \C_0^{\infty}(\Omega;\R^n), n = 2, 3$, there exists a constant $C$ such that
	\begin{equation}
	\|\u\|_{\mathbb{L}^4}\leq C^{1/4}\|\u\|^{1-\frac{n}{4}}\|\nabla\u\|^{\frac{n}{4}},\text{ for } n=2,3,
	\end{equation}
	where $C=2,4$ for $n=2,3$ respectively. 
\end{lemma}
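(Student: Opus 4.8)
The plan is to run the classical argument of Ladyzhenskaya: after extending $\u$ by zero to all of $\R^n$ (so that it becomes a compactly supported smooth field), one bounds $|\u|^2$ pointwise by one--dimensional integrals coming from the fundamental theorem of calculus in each coordinate direction, and then assembles these estimates using the Cauchy--Schwarz inequality and Fubini's theorem. Since every integration is performed over all of $\R^n$ and uses only the compactness of the support, the argument never sees $\partial\Omega$, which is precisely why the estimate persists on unbounded domains.

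\emph{The case $n=2$.} Set $\phi := |\u|^2 = \sum_k u_k^2 \in \C_0^\infty(\R^2)$. Since $\phi$ vanishes as $x_1\to-\infty$ and as $x_2\to-\infty$, and since $|\partial_i\phi| = 2\,|\sum_k u_k\partial_i u_k| \le 2\,|\u|\,|\partial_i\u|$ by Cauchy--Schwarz (with $|\partial_i\u| := (\sum_k|\partial_i u_k|^2)^{1/2}$), the fundamental theorem of calculus gives
\begin{align*}
\phi(x_1,x_2) &\le \int_{\R}|\partial_1\phi|\d x_1 \le 2\int_{\R}|\u|\,|\partial_1\u|\d x_1 =: F_1(x_2),\\
\phi(x_1,x_2) &\le \int_{\R}|\partial_2\phi|\d x_2 \le 2\int_{\R}|\u|\,|\partial_2\u|\d x_2 =: F_2(x_1).
\end{align*}
Multiplying these two pointwise bounds, integrating over $\R^2$, and using Fubini's theorem followed by the Cauchy--Schwarz inequality,
\[
\int_{\R^2}|\u|^4\d x = \int_{\R^2}\phi^2\d x \le \Big(\int_{\R}F_1\d x_2\Big)\Big(\int_{\R}F_2\d x_1\Big) \le 4\,\|\u\|^2\,\|\partial_1\u\|\,\|\partial_2\u\|.
\]
Since $\|\partial_1\u\|\,\|\partial_2\u\| \le \tfrac12\big(\|\partial_1\u\|^2+\|\partial_2\u\|^2\big) = \tfrac12\|\nabla\u\|^2$, this yields $\|\u\|_{\mathbb{L}^4}^4 \le 2\|\u\|^2\|\nabla\u\|^2$, which on taking fourth roots is the assertion with $C=2$.

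\emph{The case $n=3$.} I would reduce to the two--dimensional case by slicing. For each fixed $x_3$ the field $\u(\cdot,\cdot,x_3)$ belongs to $\C_0^\infty(\R^2;\R^3)$, so the inequality just proved, applied on that slice, gives
\[
\int_{\R^2}|\u|^4\d x_1\d x_2 \le 2\Big(\int_{\R^2}|\u|^2\d x_1\d x_2\Big)\Big(\int_{\R^2}|\nabla'\u|^2\d x_1\d x_2\Big),
\]
where $\nabla'=(\partial_1,\partial_2)$. Writing $g(x_3):=\int_{\R^2}|\u|^2\d x_1\d x_2$, which is compactly supported in $x_3$, the fundamental theorem of calculus in $x_3$ together with the Cauchy--Schwarz inequality gives
\[
\sup_{x_3\in\R}g(x_3) \le \int_{\R}|g'(x_3)|\d x_3 \le 2\int_{\R^3}|\u|\,|\partial_3\u|\d x \le 2\,\|\u\|\,\|\nabla\u\|.
\]
Integrating the sliced inequality in $x_3$, bounding $|\nabla'\u|\le|\nabla\u|$, and pulling out the supremum of $g$,
\[
\int_{\R^3}|\u|^4\d x \le 2\Big(\sup_{x_3}g(x_3)\Big)\int_{\R^3}|\nabla\u|^2\d x \le 2\cdot 2\,\|\u\|\,\|\nabla\u\|\cdot\|\nabla\u\|^2 = 4\,\|\u\|\,\|\nabla\u\|^3,
\]
which on taking fourth roots is the assertion with $C=4$.

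I do not expect a genuinely hard step here: the only points that need a little care are the componentwise Cauchy--Schwarz bound $|\partial_i\phi|\le 2|\u|\,|\partial_i\u|$ (which lets the vector--valued case be treated exactly as the scalar one), and, in three dimensions, keeping track of the numerical constant when passing from the slice estimate to the global one. The extension by zero is what makes the boundedness of $\Omega$ irrelevant.
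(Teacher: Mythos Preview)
Your proof is correct and is precisely the classical argument due to Ladyzhenskaya. The paper, however, does not supply a proof of this lemma at all: it is stated with a citation to Lemma~1 and~2, Chapter~1 of \cite{OAL} and then used without further justification. What you have written is essentially the argument found in that reference, so there is nothing to compare --- you have filled in what the paper deliberately left to the literature.
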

Note that the constant appearing in Ladyzhenskaya inequality does not depend on $\Omega$. Thus, for every $\u,\v,\w \in \V_{\text{div}}$, the following estimates hold:
\begin{equation}
|b(\u,\v,\w)| \leq \sqrt{2}\|\u\|^{1/2}\| \nabla \u\|^{1/2}\|\v\|^{1/2}\| \nabla \v\|^{1/2}\| \nabla \w\|,
\end{equation}
so that for all $\u\in\V_{\text{div}},$ we have
\begin{equation}
\label{be}
\|\B(\u,\u)\|_{\V_{\text{div}}'}\leq \sqrt{2}\|\u\|\|\nabla\u\|\leq \sqrt{\frac{2}{\lambda_1}}\|\u\|_{\V_{\text{div}}}^2 ,
\end{equation}
by using the Poincar\'e inequality. 

Taking $n=p=q=2$ and in \eqref{2.3}, we get $\theta=1-\frac{2}{r}$, so that for all $r\geq 2$, we have 
\begin{equation}
\|\u\|_{\mathbb{L}^r}\leq C\|\nabla\u\|^{1-\frac{2}{r}}\|\u\|^{\frac{2}{r}},
\end{equation}

\begin{lemma}[Agmon's Inequality, Lemma 13.2, \cite{SA}]
	For any 
	$\u\in \H^{s_2}(\Omega;\R^n),$ choose  $s_{1}$ and $s_{2}$ such that 
	$ s_1< \frac{n}{2} < s_2$. Then, if 
	$0< \alpha < 1$ and 
	$\frac{n}{2} = \alpha s_1 + (1-\alpha)s_2$, the following inequality holds 
	$$ \|\u\|_{\mathbb{L}^\infty}\leq C \|\u\|_{\H^{s_1}}^{\alpha} \|\u\|_{\H^{s_2}}^{1-\alpha}.$$
\end{lemma}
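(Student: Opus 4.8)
The plan is to establish this classical interpolation inequality by Fourier analysis on $\R^{n}$, after a routine reduction to the whole space. Agmon's estimate expresses the fact that, although the critical Sobolev space $\H^{n/2}$ does not embed into $\mathbb{L}^{\infty}$, the sup-norm is still controlled by a product of a subcritical and a supercritical Sobolev norm whose interpolation weights $\alpha$, $1-\alpha$ are pinned at the critical exponent $n/2$. First I would invoke --- using the standing assumption that $\partial\Omega$ is sufficiently smooth --- a Sobolev extension operator $E\colon\H^{s}(\Omega;\R^{n})\to\H^{s}(\R^{n};\R^{n})$, bounded for every $s\geq 0$ with operator norm depending only on $s$, $n$ and $\Omega$, and satisfying $(E\u)\big|_{\Omega}=\u$. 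Then $\|\u\|_{\mathbb{L}^{\infty}(\Omega)}\leq\|E\u\|_{\mathbb{L}^{\infty}(\R^{n})}$ and $\|E\u\|_{\H^{s_1}(\R^{n})}\leq C\|\u\|_{\H^{s_1}(\Omega)}$, $\|E\u\|_{\H^{s_2}(\R^{n})}\leq C\|\u\|_{\H^{s_2}(\Omega)}$, so it suffices to prove the inequality for functions $\v\in\H^{s_2}(\R^{n};\R^{n})$, with the Bessel-potential norm $\|\v\|_{\H^{s}}^{2}=\int_{\R^{n}}(1+|\xi|^{2})^{s}|\widehat{\v}(\xi)|^{2}\d\xi$.

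On $\R^{n}$, since $2s_2>n$ the Cauchy--Schwarz inequality gives $\int_{\R^{n}}|\widehat{\v}(\xi)|\d\xi\leq\big(\int_{\R^{n}}(1+|\xi|^{2})^{-s_2}\d\xi\big)^{1/2}\|\v\|_{\H^{s_2}}<\infty$; hence $\widehat{\v}\in\mathrm{L}^{1}(\R^{n})$, $\v$ admits a bounded continuous representative, and $\|\v\|_{\mathbb{L}^{\infty}}\leq c_{n}\|\widehat{\v}\|_{\mathrm{L}^{1}}$ for a dimensional constant $c_{n}$. For a cutoff $R\geq 1$ I would split $\|\widehat{\v}\|_{\mathrm{L}^{1}}=\int_{|\xi|\leq R}|\widehat{\v}|\d\xi+\int_{|\xi|>R}|\widehat{\v}|\d\xi$, insert the weight $(1+|\xi|^{2})^{s_1/2}$ in the low-frequency piece and $(1+|\xi|^{2})^{s_2/2}$ in the high-frequency piece, and apply Cauchy--Schwarz again; together with the elementary bounds $\int_{|\xi|\leq R}(1+|\xi|^{2})^{-s_1}\d\xi\leq CR^{n-2s_1}$ (valid because $2s_1<n$) and $\int_{|\xi|>R}(1+|\xi|^{2})^{-s_2}\d\xi\leq CR^{n-2s_2}$ (valid because $2s_2>n$), this yields
\[
\|\v\|_{\mathbb{L}^{\infty}}\leq C\big(R^{(n-2s_1)/2}\|\v\|_{\H^{s_1}}+R^{(n-2s_2)/2}\|\v\|_{\H^{s_2}}\big),\qquad R\geq 1.
\]
Since $s_1<s_2$ forces $\|\v\|_{\H^{s_1}}\leq\|\v\|_{\H^{s_2}}$, the balancing choice $R=\big(\|\v\|_{\H^{s_2}}/\|\v\|_{\H^{s_1}}\big)^{1/(s_2-s_1)}$ is admissible (it is $\geq 1$, and the case $\v=0$ is trivial); substituting it and simplifying exponents produces $\|\v\|_{\mathbb{L}^{\infty}}\leq C\|\v\|_{\H^{s_2}}^{\gamma}\|\v\|_{\H^{s_1}}^{1-\gamma}$ with $\gamma=\tfrac{n-2s_1}{2(s_2-s_1)}$. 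A final check shows $1-\gamma=\tfrac{2s_2-n}{2(s_2-s_1)}$, which is exactly the solution $\alpha$ of $\frac{n}{2}=\alpha s_1+(1-\alpha)s_2$ and automatically lies in $(0,1)$ when $s_1<\frac{n}{2}<s_2$; combined with the reduction step this gives $\|\u\|_{\mathbb{L}^{\infty}}\leq C\|\u\|_{\H^{s_1}}^{\alpha}\|\u\|_{\H^{s_2}}^{1-\alpha}$.

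I do not expect a genuine obstruction, since Agmon's inequality is classical; the points that require actual care are the optimization bookkeeping --- checking that the balanced $R$ reproduces precisely the exponent $\alpha$ of the hypothesis, which is exactly where the criticality relation $\alpha s_1+(1-\alpha)s_2=\frac{n}{2}$ enters --- and the whole-space reduction, which silently uses that $\partial\Omega$ is smooth enough for one extension operator to be bounded on both $\H^{s_1}$ and $\H^{s_2}$ simultaneously. Alternatively, one can argue intrinsically on $\Omega$ by real interpolation: $(\H^{s_1},\H^{s_2})_{1-\alpha,1}$ coincides with a Besov space $\mathrm{B}^{n/2}_{2,1}(\Omega)$ that does embed continuously into $\mathbb{L}^{\infty}(\Omega)$, and the real-interpolation inequality $\|\cdot\|_{(\H^{s_1},\H^{s_2})_{1-\alpha,1}}\leq C\|\cdot\|_{\H^{s_1}}^{\alpha}\|\cdot\|_{\H^{s_2}}^{1-\alpha}$ then closes the argument; the Fourier proof above is essentially the same computation with the $K$-functional written out explicitly.
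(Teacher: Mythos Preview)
The paper does not prove this lemma at all: it is simply quoted from Agmon's book \cite{SA} as a known tool, so there is no in-paper argument to compare against. Your Fourier-analytic proof is correct and is in fact one of the standard proofs of Agmon's inequality; the frequency splitting at scale $R$, the Cauchy--Schwarz estimates with the weights $(1+|\xi|^{2})^{s_j}$, and the optimization in $R$ all go through, and your bookkeeping $1-\gamma=\alpha=\tfrac{2s_2-n}{2(s_2-s_1)}$ matches the interpolation constraint $\frac{n}{2}=\alpha s_1+(1-\alpha)s_2$ exactly. The only caveats you already flag yourself: the extension step needs enough boundary regularity for a single extension operator bounded on both $\H^{s_1}$ and $\H^{s_2}$, and the constraint $R\geq 1$ is genuinely used in the low-frequency bound $\int_{|\xi|\leq R}(1+|\xi|^{2})^{-s_1}\d\xi\leq CR^{n-2s_1}$ (for $R<1$ the integral is of order $R^{n}$ instead), but your observation that the optimal $R$ is automatically $\geq 1$ because $\|\v\|_{\H^{s_1}}\leq\|\v\|_{\H^{s_2}}$ disposes of this cleanly.
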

For $\u\in\H^2(\Omega)\cap\H_0^1(\Omega)$, the \emph{Agmon's inequality} in 2D states that there exists a constant 
$C>0$ such that
\begin{equation}\label{agm}
\|\u\|_{\mathbb{L}^{\infty}}\leq C\|\u\|^{1/2}\|\u\|_{\H^2}^{1/2}\leq C\|\u\|_{\H^2}.
\end{equation}
For every $f \in \mathrm{V}'$ we denote $\overline{f}$ the average of $f$ over $\Omega$, i.e., $\overline{f} := |\Omega|^{-1}\langle f, 1 \rangle$, where $|\Omega|$ is the Lebesgue measure of $\Omega$. 
Let us also introduce the spaces (see \cite{unique})
\begin{align*}\mathrm{V}_0 &= \{ v \in \mathrm{V} \ : \ \overline{v} = 0 \},\\
\mathrm{V}_0' &= \{ f \in \mathrm{V}' \ : \ \overline{f} = 0 \},
\end{align*}
and the operator $\mathcal{A} : \mathrm{V} \rightarrow \mathrm{V}'$ is defined by
\begin{equation*}\langle \mathcal{A} u ,v \rangle := \int_\Omega \nabla u(x) \cdot \nabla v(x) \d x \  \text{for all } \ u,v \in \mathrm{V}.
\end{equation*}
Clearly $\mathcal{A}$ is linear and it maps $\mathrm{V}$ into $\mathrm{V}_0'$ and its restriction $\mathcal{B}$ to $\mathrm{V}_0$ onto $\mathrm{V}_0'$ is an isomorphism.   
We know that for every $f \in \mathrm{V}_0'$, $\mathcal{B}^{-1}f$ is the unique solution with zero mean value of the \emph{Neumann problem}:
\begin{equation*}
\left\{
\begin{aligned}
- \Delta u = f, \  \mbox{ in } \ \Omega, \\
\frac{\partial u}{\partial\mathbf{n}} = 0, \ \mbox{ on } \  \partial \Omega.
\end{aligned}
\right.\end{equation*}
In addition, we have
\begin{eqnarray} 
&\langle \mathcal{A}u , \mathcal{B}^{-1}f \rangle = \langle f ,u \rangle, \ \text{ for all } \ u\in \mathrm{V},  \ f \in \mathrm{V}_0' , \label{bes}\\
&\langle f , \mathcal{B}^{-1}g \rangle = \langle g ,\mathcal{B}^{-1}f \rangle = \int_\Omega \nabla(\mathcal{B}^{-1}f)\cdot \nabla(\mathcal{B}^{-1}g)\d x, \ \text{for all } \ f,g \in \mathrm{V}_0'.\label{bes1}
\end{eqnarray}
Note that $\mathcal{B}$ can be also viewed as an unbounded linear operator on $\mathrm{H}$ with
domain $\D(\mathcal{B}) = \left\{v \in \mathrm{H}^2(\Omega) : \frac{\partial v}{\partial\mathbf{n}}= 0\text{ on }\partial\Omega \right\}$. 
\begin{definition}
Let $\X$ be a Banach space and
$\tau\in\R$, we denote by $\mathrm{L}^p_{\text{tb}} ([\tau,\infty) ;\X)$, $1\leq  p<\infty$, the space of functions 
$f \in \mathrm{L}^p_{\text{loc}} ([\tau,\infty); \X)$ that are translation-bounded in $\mathrm{L}_{\text{loc}}^p ([\tau,\infty); \X)$, that is, $$\|f\|^p_{\mathrm{L}^p_{\text{tb}} ([\tau,\infty) ;\X)}:=\sup_{t\geq \tau}\int_t^{t+1}\|f(s)\|_{\X}^p\d s<+\infty.$$
\end{definition}
\subsection{\bf Existence and Uniqueness of the Governing Equations}
Now we state the existence theorem and uniqueness theorem for the nonlocal Cahn-Hilliard-Navier-Stokes system  given in \eqref{nonlin phi}-\eqref{initial conditions}.  Let us first make the following assumptions:
\begin{assumption}\label{prop of F and J} Let
	$\mathrm{J}$ and $\mathrm{F}$ satisfy:
	\begin{enumerate}
		\item [(1)] $ \J \in \W^{1,1}(\mathbb{R}^2;\R), \  \J(x)= \J(-x) \; \text {and} \ a(x) = \int_\Omega \J(x-y)\d y \geq 0,$ a.e., in $\Omega$.
		\item [(2)] $\F \in \C^{2}(\mathbb{R})$ and there exists $C_0 >0$ such that $\F''(s)+ a(x) \geq C_0$, for all $s \in \mathbb{R}$, a.e., $x \in \Omega$.
		\item [(3)] Moreover, there exist $C_1 >0$, $C_2 > 0$ and $q>0$ such that $\F''(s)+ a(x) \geq C_1|s|^{2q} - C_2$, for all $s \in \mathbb{R}$, a.e., $x \in \Omega$.
		\item [(4)] There exist $C_3 >0$, $C_4 \geq 0$ and $r \in (1,2]$ such that $|\F'(s)|^r \leq C_3|\F(s)| + C_4,$ for all $s \in \mathbb{R}$.
	\end{enumerate}
\end{assumption}
\begin{remark}\label{remark J}
	Assumption $\J \in \W^{1,1}(\mathbb{R}^2;\R)$ can be weakened. Indeed, it can be replaced by $\J \in \W^{1,1}(\B_\delta;\R)$, where $\B_\delta := \{z \in \mathbb{R}^2 : |z| < \delta \}$ with $\delta := \emph{diam}(\Omega)=\sup\limits_{x,y\in \Omega}d(x,y)$, where $d(\cdot,\cdot)$ is the Euclidean metric on $\mathbb{R}^2$, or also by
	 \begin{equation} \label{Estimate J}
	\sup_{x\in \Omega} \int_\Omega \left( |\J(x-y)| + |\nabla \J(x-y)| \right) \d y < +\infty .
	\end{equation}
\end{remark}

\begin{remark}\label{remark F}
	Since $\F(\cdot)$ is bounded from below, it is easy to see that the Assumption \ref{prop of F and J} (4) implies that $\F(\cdot)$ has a polynomial growth of order $r'$, where $r' \in [2,\infty)$ is the conjugate index to $r$. Namely, there exist $C_5$ and $C_6 \geq 0$ such that 
	\begin{eqnarray}\label{2.9}
	|\F(s)| \leq C_5|s|^{r'} + C_6,\   \text{for all } \ s \in \R.
	\end{eqnarray}
	Observe that the Assumption \ref{prop of F and J} (4) is fulfilled by a potential of arbitrary polynomial growth. For example,  the Assumption \ref{prop of F and J}  (2)-(4) are satisfied for the case of the well-known double-well potential $$\F (s)= (s^2 - 1)^2.$$
\end{remark}
Let us now give the definition of a weak solution for the system  \eqref{nonlin phi}-\eqref{initial conditions}.
\begin{definition}[Weak Solution]
	Let $\u_0\in\G_{\text{div}}$, $\varphi_0\in \mathrm{H}$ with $\F(\varphi_0)\in\mathrm{L}^1(\Omega)$ and $0<T<\infty$ be given. Then $(\u,\varphi)$ is said to be a \emph{weak solution} to the uncontrolled system  \eqref{nonlin phi}-\eqref{initial conditions} on $[0,T]$ corresponding to the initial conditions $\u_0$ and $\varphi_0$ if 
	\begin{itemize}
		\item [(i)] $\u,\varphi$ and $\mu$ satisfy 
		\begin{equation}\label{sol}
		\left\{
		\begin{aligned}
		&	\u \in \mathrm{L}^{\infty}(0,T;\G_{\text{div}}) \cap \mathrm{L}^2(0,T;\V_{\text{div}}),  \\ 
		&	\u_t \in \mathrm{L}^{2-\gamma}(0,T;\V_{\text{div}}'),\text{ for all  }\gamma \in (0,1),  \\
		&	\varphi \in \mathrm{L}^{\infty}(0,T;\mathrm{H}) \cap \mathrm{L}^2(0,T;\mathrm{V}),   \\
		&	\varphi_t \in \mathrm{L}^{2-\delta}(0,T;\mathrm{V}'),\text{ for all  }\delta \in (0,1), \\
		&	\mu \in \mathrm{L}^2(0,T;\mathrm{V}),
		\end{aligned}
		\right.
		\end{equation}
		\item [(ii)]  For every $\psi\in\mathrm{V}$, every $\v \in \V_{\text{div}}$, if we define 		             $\rho$ by 
		\begin{equation}
		\rho(x,\varphi):=a(x)\varphi+\F'(\varphi),
		\end{equation} and for almost any $t\in(0,T)$, we have
		\begin{align}
		\langle \varphi_t,\psi\rangle +(\nabla\rho,\nabla\psi)&=\int_{\Omega}(\u\cdot\nabla\psi)\varphi\d x+\int_{\Omega}(\nabla\J*\varphi)\cdot\nabla\psi\d x,\\
		\langle \u_t,\v\rangle +\nu(\nabla\u,\nabla\v)+b(\u,\v,\w)&=-\int_{\Omega}(\v\cdot\nabla\mu)\varphi\ d x+\langle\h,\v\rangle.
		\end{align}
		\item [(iii)] Moreover, the following initial conditions hold in the weak sense 
		\begin{equation}
		\u(0)=\u_0,\ \varphi(0)=\varphi_{0},
		\end{equation}
		i.e., for every $\v \in \V_{\text{div}}$, we have $(\u(t),\v) \to (\u_0,\v)$ as $t\to 0$, and for every $\chi \in \mathrm{V}$, we have $(\varphi(t),\chi) \to (\varphi_0,\chi)$ as $t\to  0$.
	\end{itemize}
\end{definition}
Next, we discuss the existence and uniqueness of weak solution results available in the literature for  the system  \eqref{nonlin phi}-\eqref{initial conditions}.
\begin{theorem}[Existence, Theorem 1, Corollaries 1 and 2, \cite{weak}]\label{exist}
	Let the Assumption \ref{prop of F and J} be satisfied. Let $\u_0 \in \G_{\text{div}}$, $\varphi_0 \in \mathrm{H}$ such that $\F(\varphi_0) \in \mathrm{L}^1(\Omega)$ and $\mathbf{h} \in \mathrm{L}^2_{\text{loc}}([0,\infty), \V_{\text{div}}')$. Then, for every given $T>0$, there exists a weak solution $(\u,\varphi)$ to the uncontrolled equation \eqref{nonlin phi}-\eqref{initial conditions} such that \eqref{sol} is satisfied.
	Furthermore, setting
	\begin{equation*}\mathscr{E}(\u(t),\varphi(t)) = \frac{1}{2} \|\u(t)\|^2 + \frac{1}{4} \int_\Omega \int_\Omega \J(x-y) (\varphi(x,t) - \varphi(y,t))^2 \d x \d y + \int_\Omega \F(\varphi(t))\d x,
\end{equation*}
	the following energy estimate holds for almost any $t>0$:
	\begin{equation}\label{energy}
	\mathscr{E}(\u(t),\varphi(t)) + \int_0^t \left(\nu \| \nabla \u(s)\|^2 + \| \nabla\mu(s) \|^2 \right)\d s \leq \mathscr{E}(\u_0,\varphi_0) + \int_0^t \langle \mathbf{h}(s), \u(s) \rangle\d s,\end{equation}
	or the weak solution $(\u,\varphi)$ satisfies the following energy identity,
	$$\frac{\d}{\d t}\mathscr{E}(\u(t),\varphi(t)) + \nu \|\nabla \u(t) \|^2+ \| \nabla \mu(t) \|^2 = \langle \mathbf{h}(t) , \u(t) \rangle.$$
	
	Furthermore, if in addition $\mathbf{h}  \in \mathrm{L}^2_{\text{tb}}([0,\infty);\V_{\text{div}}')$, then the following dissipative estimate is satisfied:
	$$\mathscr{E}(\u(t),\varphi(t)) \leq \mathscr{E}(\u_0,\varphi_0) \exp (-kt) + \F(m_0)|\Omega| + K,  $$
	where $m_0 = (\varphi_0,1)$ and $k$, $K$ are two positive constants which are independent of the initial data, with $K$ depending on $\Omega$, $\nu$, $\J$, $\F$, $\|\mathbf{h}\|_{\mathrm{L}_{\text{tb}}^2(0,\infty;\V_{\text{div}}')}$. 
\end{theorem}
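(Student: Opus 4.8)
The plan is to build a weak solution by a Faedo--Galerkin scheme and then pass to the limit using compactness; the energy inequality, energy identity and dissipative estimate are then read off from the approximating system. First I would fix the bases: for the velocity the eigenfunctions $\{\me_j\}\subset\D(\A)$ of the Stokes operator $\A$, orthonormal in $\G_{\text{div}}$, and for the order parameter the eigenfunctions $\{\psi_j\}$ of the Neumann operator $\mathcal B$ on $\mathrm H$. Writing $\u_n=\sum_{j=1}^n a_j^n(t)\me_j$, $\varphi_n=\sum_{j=1}^n b_j^n(t)\psi_j$ and $\mu_n=a\varphi_n-\J\ast\varphi_n+\F'(\varphi_n)$, and projecting \eqref{nonlin phi}--\eqref{nonlin u} onto these spaces, one obtains an ODE system for $(a^n,b^n)$ whose right-hand side is locally Lipschitz (here $\F\in\C^2$ is used), hence a local solution on some $[0,T_n)$; the a priori bounds below force $T_n=T$.

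Next I would derive the basic energy estimate at the Galerkin level by testing the $\varphi_n$--equation with $\mu_n$ and the $\u_n$--equation with $\u_n$ and adding. The key algebraic fact is that the transport term $\int_\Omega(\u_n\cdot\nabla\varphi_n)\mu_n\,\d x$ from the first equation cancels the capillarity term $-\int_\Omega(\u_n\cdot\nabla\mu_n)\varphi_n\,\d x$ from the second (integrate by parts, $\Div\u_n=0$), while $b(\u_n,\u_n,\u_n)=0$. Since $t\mapsto\mathscr E(\u_n,\varphi_n)$ is $\C^1$ along the Galerkin solution (and $\langle\varphi_{n,t},\mu_n\rangle$ is precisely the time derivative of the non-kinetic part of $\mathscr E$, using Assumption \ref{prop of F and J}(1)), the left side equals $\frac{\d}{\d t}\mathscr E(\u_n,\varphi_n)+\nu\|\nabla\u_n\|^2+\|\nabla\mu_n\|^2$ and the right side is $\langle\h,\u_n\rangle$. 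Integrating and using Young's and Poincar\'e's inequalities yields uniform bounds for $\u_n$ in $\mathrm L^\infty(0,T;\G_{\text{div}})\cap\mathrm L^2(0,T;\V_{\text{div}})$ and for $\nabla\mu_n$ in $\mathrm L^2(0,T;\mathrm H)$. The bounds on $\varphi_n$ and $\mu_n$ themselves come from Assumption \ref{prop of F and J}(2)--(4): the coercivity $\F''+a\ge C_0$ controls $\varphi_n$ in $\mathrm L^\infty(0,T;\mathrm H)$, the stronger coercivity (3) together with the growth bound \eqref{2.9} controls $\F(\varphi_n)$ in $\mathrm L^\infty(0,T;\mathrm L^1)$ and the mean $\overline{\mu_n}$, and a Poincar\'e--Wirtinger inequality then upgrades the gradient bound to $\mu_n\in\mathrm L^2(0,T;\mathrm V)$; elliptic regularity applied to $a\varphi_n+\F'(\varphi_n)=\mu_n+\J\ast\varphi_n$ gives $\varphi_n\in\mathrm L^2(0,T;\mathrm V)$.

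Then I would estimate the time derivatives. Pairing the equations with test functions in $\V_{\text{div}}$ resp.\ $\mathrm V$, every term is bounded in $\mathrm L^2(0,T)$ except the capillarity term $\varphi_n\nabla\mu_n$: in two dimensions $\varphi_n\in\mathrm L^\infty(0,T;\mathrm H)\cap\mathrm L^2(0,T;\mathrm V)\hookrightarrow\mathrm L^4(0,T;\mathbb L^4)$ while $\nabla\mu_n\in\mathrm L^2(0,T;\mathrm H)$, so $\varphi_n\nabla\mu_n$ is only bounded in $\mathrm L^{4/3}(0,T;\mathbb L^{4/3})$; this forces the weaker exponents $\u_{n,t}\in\mathrm L^{2-\gamma}(0,T;\V_{\text{div}}')$ and $\varphi_{n,t}\in\mathrm L^{2-\delta}(0,T;\mathrm V')$ for every $\gamma,\delta\in(0,1)$. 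I expect this to be the main obstacle: the Korteweg force is genuinely less integrable in time than the Navier--Stokes convection term, so the compactness step and every subsequent limit passage must be carried out with these reduced exponents in mind.

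Finally I would pass to the limit. The uniform bounds plus the Aubin--Lions--Simon lemma (using the compact embeddings $\V_{\text{div}}\Subset\G_{\text{div}}$ and $\mathrm V\Subset\mathrm H$) give, along a subsequence, the weak/weak-$\ast$ convergences in the spaces of \eqref{sol} together with $\u_n\to\u$ strongly in $\mathrm L^2(0,T;\G_{\text{div}})$, $\varphi_n\to\varphi$ strongly in $\mathrm L^2(0,T;\mathrm H)$ and a.e.\ in $\Omega\times(0,T)$; the a.e.\ convergence with $\F\in\C^2$ and \eqref{2.9} identifies $\F'(\varphi_n)\to\F'(\varphi)$ by a Vitali argument, hence $\mu_n\to\mu$, and lets one pass to the limit in $b(\u_n,\u_n,\cdot)$, $(\u_n\cdot\nabla\psi)\varphi_n$ and $(\v\cdot\nabla\mu_n)\varphi_n$. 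Density of $\bigcup_n\mathrm{span}\{\me_j\}$ and $\bigcup_n\mathrm{span}\{\psi_j\}$ then yields the weak formulation, and the initial data are recovered from $\u\in\C([0,T];\V_{\text{div}}')$, $\varphi\in\C([0,T];\mathrm V')$. The energy inequality \eqref{energy} follows by integrating the Galerkin identity and using weak lower semicontinuity of the $\G_{\text{div}}$-- and gradient-norms and Fatou's lemma (with \eqref{2.9}) for $\int_\Omega\F(\varphi)\,\d x$; the energy identity requires in addition a regularization/density argument justifying that the limit pair $(\u,\varphi)$ is itself admissible as a test pair, which uses the $\u_t,\varphi_t$ regularity and $\mu\in\mathrm L^2(0,T;\mathrm V)$. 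The dissipative estimate then follows from the differential identity $\frac{\d}{\d t}\mathscr E+\nu\|\nabla\u\|^2+\|\nabla\mu\|^2=\langle\h,\u\rangle$: using Assumption \ref{prop of F and J}(2)--(3) to bound $\mathscr E$ above and below (up to additive constants depending on $m_0$) in terms of $\|\nabla\u\|^2+\|\nabla\mu\|^2$, and the translation-boundedness $\h\in\mathrm L^2_{\text{tb}}([0,\infty);\V_{\text{div}}')$, the uniform Gronwall lemma delivers the exponential decay with constants of the stated form.
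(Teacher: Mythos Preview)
The paper does not give its own proof of this theorem; it is stated without proof and attributed to Theorem~1 and Corollaries~1--2 of \cite{weak}. So there is nothing in the present paper to compare your argument against. That said, your outline is precisely the Faedo--Galerkin scheme carried out in \cite{weak}: spectral bases for velocity and concentration, the energy identity at the approximate level via the cancellation between $(\u_n\cdot\nabla\varphi_n,\mu_n)$ and $(\mu_n\nabla\varphi_n,\u_n)$, the reduced time-integrability of the time derivatives forced by the Korteweg term, and passage to the limit by Aubin--Lions together with Vitali for $\F'(\varphi_n)$.

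One small technical point worth tightening: you write $\mu_n=a\varphi_n-\J\ast\varphi_n+\F'(\varphi_n)$ without projecting, but the Galerkin $\varphi_n$--equation only admits test functions in $\mathrm{span}\{\psi_1,\dots,\psi_n\}$. In \cite{weak} the chemical potential is taken to be the $\mathrm H$--projection $\mu_n=P_n\big(a\varphi_n-\J\ast\varphi_n+\F'(\varphi_n)\big)$; the energy computation then still works because $\varphi_{n,t}\in\mathrm{span}\{\psi_j\}$, so $(\varphi_{n,t},\mu_n)=(\varphi_{n,t},a\varphi_n-\J\ast\varphi_n+\F'(\varphi_n))$, which is exactly the time derivative of the nonlocal free energy. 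Without this projection the test step is not literally justified. Apart from this detail your plan matches the original proof.
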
 
\begin{remark}
\iffalse
	It can be  proved that $\u\in\C_w([0,T];\G_{\text{div}})$ and $\varphi\in \C_w([0,T];\mathrm{H})$. Here  $\C_w$ means continuity on the
	interval $(0,T^*)$ with values in the weak topology of $\G_{\text{div}}$ and $\mathrm{H}$ respectively. i.e.,
	$\u\in \C_w([0,T^*];\G_{\text{div}})$ means that for any fixed $\v\in
	\G_{\text{div}}'\cong \G_{\text{div}}$, $(\u,\v)$ is a continuous scalar
	function on $(0,T^*)$. 
\fi	
	The above theorem also implies $\u\in\C([0,T];\G_{\text{div}})$ and $\varphi\in \C([0,T];\mathrm{H})$ by using the Aubin-Lions compactness theorem.
\end{remark}

\begin{remark}\label{rem2.5}
	We denote by $\mathbb{Q}$, a continuous monotone increasing function with respect to each of its arguments. As a consequence of energy inequality (\ref{energy}), we have the following bound:
	\begin{eqnarray}
	&	\|\u\|_{\mathrm{L}^{\infty}(0,T;\G_{\text{div}}) \cap \mathrm{L}^2(0,T;\V_{\text{div}})} + \|\varphi \|_{\mathrm{L}^{\infty}(0,T;\mathrm{H}) \cap \mathrm{L}^2(0,T;\mathrm{V})} + \|\F(\varphi)\|_{\mathrm{L}^{\infty}(0,T;\mathrm{H})} \nonumber\\& \leq \mathbb{Q}\left(\mathscr{E}(\u_0,\varphi_0),\|\mathbf{h}\|_{\mathrm{L}^2(0,T;\V_{\text{div}}')}\right),
	\end{eqnarray}
	where $\mathbb{Q}$ also depends on $\F$, $\J$, $\nu$ and $\Omega$.
\end{remark}

\begin{theorem}[Uniqueness, Theorem 2, \cite{unique}]\label{unique}
	Suppose that the Assumption \ref{prop of F and J} is satisfied. Let $\u_0 \in \G_{\text{div}}$, $\varphi_0 \in \mathrm{H}$ with $\F(\varphi_0) \in \mathrm{L}^1(\Omega)$ and $\mathbf{h} \in \mathrm{L}^2_{\text{loc}}([0,\infty);\V_{\text{div}}')$. Then, the weak solution $(\u,\varphi)$ corresponding to $(\u_0,\varphi_0)$ and given by Theorem \ref{exist} is unique. Furthermore, for $i=1,2$, let $\z_i :=  (\u_i,\varphi_i)$ be two weak solutions corresponding to two initial data $\z_{0i} :=  (\u_{0i},\varphi_{0i})$ and external forces $\h_i$, with $\u_{0i} \in \G_{\text{div}}$, $\varphi_{0i} \in \mathrm{H}$ with $\F(\varphi_{0i}) \in \mathrm{L}^1(\Omega)$ and $\h_i \in \mathrm{L}^2_{\text{loc}}([0,\infty);\V_{\text{div}}')$. Then the following continuous dependence estimate holds:
	\begin{align*}
	&\| \u_2(t) - \u_1(t) \|^2 + \| \varphi_2(t) - \varphi_1(t) \|^2_{\mathrm{V}'} \\
	&\quad + \int_0^t \left( \frac{C_0}{2} \| \varphi_2(\tau) - \varphi_1(\tau) \|^2 + \frac{\nu}{4} \|\nabla( \u_2(\tau) - \u_1(\tau)) \|^2 \right) d \tau \\
	&\leq \left( \|\u_2(0) - \u_1(0) \|^2 + \| \varphi_2(0) - \varphi_1(0) \|^2_{\mathrm{V}'} \right) \Lambda_0(t) \\
	&\quad + \| \overline{\varphi}_2(0) - \overline{\varphi}_1(0)\| \mathbb{Q} \left( \mathcal{E}(z_{01}),\mathcal{E}(z_{02}),\| \h_1 \|_{\mathrm{L}^2(0,t;\V_{div}')},\| \h_2 \|_{\mathrm{L}^2(0,t;\V_{div}')} \right)  \Lambda_1(t) \\
	&\quad + \| \h_2 - \h_1 \|^2_{\mathrm{L}^2(0,T;\V_{\text{div}}')} \Lambda_2(t) ,
	\end{align*} 
	for all $t \in [0,T]$, where $\Lambda_0(t)$, $\Lambda_1(t)$ and $\Lambda_2(t)$ are continuous functions which depend on the norms of the two solutions. The functions $\mathbb{Q}$ and $\Lambda_i(t)$ also depend on $\F$, $\J$ and $\Omega$.
\end{theorem}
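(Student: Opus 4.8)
The plan is to prove the continuous‑dependence estimate; uniqueness is its special case with $\u_{01}=\u_{02}$, $\varphi_{01}=\varphi_{02}$, $\h_1=\h_2$. Set $\u:=\u_2-\u_1$, $\varphi:=\varphi_2-\varphi_1$, $\mu:=\mu_2-\mu_1$, $\h:=\h_2-\h_1$, $\rho_i:=a\varphi_i+\F'(\varphi_i)$. Subtracting the two weak formulations \eqref{sol} gives, for all $\chi\in\mathrm{V}$, $\v\in\V_{\text{div}}$,
\begin{align*}
&\langle\varphi_t,\chi\rangle+(\nabla(\rho_2-\rho_1),\nabla\chi)=\int_\Omega(\u\cdot\nabla\chi)\varphi_2\,\d x+\int_\Omega(\u_1\cdot\nabla\chi)\varphi\,\d x+\int_\Omega(\nabla\J\ast\varphi)\cdot\nabla\chi\,\d x,\\
&\langle\u_t,\v\rangle+\nu(\nabla\u,\nabla\v)+b(\u,\u_2,\v)+b(\u_1,\u,\v)=\langle\mu_2\nabla\varphi_2-\mu_1\nabla\varphi_1,\v\rangle+\langle\h,\v\rangle.
\end{align*}
I would first record that $\overline{\varphi}_i(t)$ is constant in time (integrate the $\varphi$‑equation over $\Omega$ and use $\partial\mu_i/\partial\mathbf{n}=0$, $\operatorname{div}\u_i=0$), so the mean of $\varphi$ is the constant $\overline{\varphi}:=\overline{\varphi}_2(0)-\overline{\varphi}_1(0)$; I write $\varphi=\widetilde{\varphi}+\overline{\varphi}$ with $\widetilde{\varphi}(t)$ of zero mean, hence in the domain of $\mathcal{B}^{-1}$. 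Throughout I use the seminorm $\|\widetilde{\varphi}\|_*^2:=\langle\widetilde{\varphi},\mathcal{B}^{-1}\widetilde{\varphi}\rangle=\|\nabla\mathcal{B}^{-1}\widetilde{\varphi}\|^2$ (cf. \eqref{bes1}), equivalent to $\|\widetilde{\varphi}\|_{\mathrm{V}'}^2$ on zero‑mean functions.

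\textbf{Concentration estimate.} I would test the $\varphi$‑equation with $\chi=\mathcal{B}^{-1}\widetilde{\varphi}$. Using $\langle\varphi_t,\mathcal{B}^{-1}\widetilde{\varphi}\rangle=\tfrac12\tfrac{\d}{\d t}\|\widetilde{\varphi}\|_*^2$ and \eqref{bes}, the elliptic term equals $\langle\widetilde{\varphi},\rho_2-\rho_1\rangle=\langle\varphi,\rho_2-\rho_1\rangle-\overline{\varphi}(1,\rho_2-\rho_1)$; Assumption \ref{prop of F and J}(2) gives $\langle\varphi,\rho_2-\rho_1\rangle=\int_\Omega\big(a+\int_0^1\F''(\varphi_1+\tau\varphi)\,\d\tau\big)\varphi^2\,\d x\ge C_0\|\varphi\|^2$, while $|(1,\rho_2-\rho_1)|$ is controlled by the solution norms and the data (Assumption \ref{prop of F and J}(4), Remark \ref{rem2.5}), contributing a $|\overline{\varphi}|\,\mathbb{Q}(t)$‑type remainder. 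The term $\int_\Omega(\u\cdot\nabla\mathcal{B}^{-1}\widetilde{\varphi})\varphi_2\,\d x$ I would estimate by Lemma \ref{lady} and the elliptic regularity $\|\mathcal{B}^{-1}\widetilde{\varphi}\|_{\mathrm{H}^2}\le C\|\widetilde{\varphi}\|$. The delicate term is $\int_\Omega(\u_1\cdot\nabla\mathcal{B}^{-1}\widetilde{\varphi})\varphi\,\d x$: the piece carrying $\overline{\varphi}$ vanishes since $\operatorname{div}\u_1=0$, and with $w:=\mathcal{B}^{-1}\widetilde{\varphi}$, so $-\Delta w=\widetilde{\varphi}$, an integration by parts ($\u_1=0$ on $\partial\Omega$) gives
$$\int_\Omega(\u_1\cdot\nabla w)\widetilde{\varphi}\,\d x=-\int_\Omega(\u_1\cdot\nabla w)\Delta w\,\d x=\int_\Omega(\nabla\u_1):(\nabla w\otimes\nabla w)\,\d x\le C\|\nabla\u_1\|\,\|\widetilde{\varphi}\|_*\|\widetilde{\varphi}\|,$$
using $\|\nabla w\|_{\mathbb{L}^4}^2\le C\|\nabla w\|\,\|w\|_{\mathrm{H}^2}$; the nonlocal term uses $\J\in\mathrm{W}^{1,1}$, so $\|\nabla\J\ast\varphi\|\le\|\nabla\J\|_{\mathrm{L}^1}\|\varphi\|$. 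Young's inequality then produces, with $\varepsilon_0>0$ arbitrarily small and $g_1\in\mathrm{L}^1(0,T)$,
$$\tfrac12\tfrac{\d}{\d t}\|\widetilde{\varphi}\|_*^2+C_0\|\varphi\|^2\le\varepsilon_0\|\varphi\|^2+\tfrac{\nu}{8}\|\nabla\u\|^2+g_1(t)\big(\|\u\|^2+\|\widetilde{\varphi}\|_*^2\big)+|\overline{\varphi}|\,\mathbb{Q}_1(t).$$

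\textbf{Velocity estimate.} Testing the $\u$‑equation with $\v=\u$, the convective terms collapse to $b(\u,\u_2,\u)=-b(\u,\u,\u_2)$ (Lemma \ref{lady}, Young), and $\langle\h,\u\rangle\le\tfrac{\nu}{8}\|\nabla\u\|^2+C\|\h\|_{\V_{\text{div}}'}^2$. The crux is the Korteweg term $\langle\mu_2\nabla\varphi_2-\mu_1\nabla\varphi_1,\u\rangle$, in which no derivative of $\varphi$ or $\mu$ may survive, $\varphi$ being controlled only in $\mathrm{V}'$ and (via the coercive term above) in $\mathrm{H}$. The way out is the identity
$$\mu_i\nabla\varphi_i=\nabla\Big(\tfrac12 a\varphi_i^2+\F(\varphi_i)-(\J\ast\varphi_i)\varphi_i\Big)-\tfrac12\varphi_i^2\,\nabla a+\varphi_i\,(\nabla\J\ast\varphi_i),$$
whose pure‑gradient part is absorbed into the pressure (its potential lies in $\mathrm{L}^1(\Omega)$ by Remark \ref{rem2.5} and the boundedness of $a$); consequently, tested against $\u\in\V_{\text{div}}$, the Korteweg difference equals
$$-\tfrac12\int_\Omega(\varphi_2+\varphi_1)\varphi\,(\nabla a)\cdot\u\,\d x+\int_\Omega\big(\varphi\,(\nabla\J\ast\varphi_2)+\varphi_1\,(\nabla\J\ast\varphi)\big)\cdot\u\,\d x,$$
which contains \emph{no} derivative of $\varphi_i$ and only multiplication by $\nabla a\in\mathbb{L}^\infty$ and convolution with $\nabla\J\in\mathbb{L}^1$. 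Each summand has one factor $\varphi$, one factor $\varphi_i\in\mathrm{L}^4$ and one factor $\u$, so by H\"older, Lemma \ref{lady} and Young it is at most $\varepsilon_1\|\varphi\|^2+\tfrac{\nu}{16}\|\nabla\u\|^2+C\|\varphi_i\|_{\mathrm{L}^4}^4\|\u\|^2$ with $\|\varphi_i\|_{\mathrm{L}^4}^4\in\mathrm{L}^1(0,T)$ (Remark \ref{rem2.5}). Hence, with $g_2\in\mathrm{L}^1(0,T)$,
$$\tfrac12\tfrac{\d}{\d t}\|\u\|^2+\nu\|\nabla\u\|^2\le\varepsilon_1\|\varphi\|^2+\tfrac{3\nu}{8}\|\nabla\u\|^2+g_2(t)\|\u\|^2+C\|\h\|_{\V_{\text{div}}'}^2.$$

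\textbf{Conclusion and main obstacle.} Adding the two inequalities, choosing $\varepsilon_0+\varepsilon_1\le C_0/2$ so that $\tfrac{C_0}{2}\|\varphi\|^2$ survives and absorbing the $\|\nabla\u\|^2$‑terms, one obtains, with $g\in\mathrm{L}^1(0,T)$ (its integral bounded through the solution norms, Remark \ref{rem2.5}),
$$\tfrac{\d}{\d t}\big(\|\u\|^2+\|\widetilde{\varphi}\|_*^2\big)+\tfrac{\nu}{2}\|\nabla\u\|^2+\tfrac{C_0}{2}\|\varphi\|^2\le g(t)\big(\|\u\|^2+\|\widetilde{\varphi}\|_*^2\big)+C\|\h\|_{\V_{\text{div}}'}^2+|\overline{\varphi}|\,\mathbb{Q}(t).$$
Gronwall's lemma, the equivalences $\|\varphi(t)\|_{\mathrm{V}'}^2\le C\|\widetilde{\varphi}(t)\|_*^2+C|\overline{\varphi}|^2$ and $\|\widetilde{\varphi}(0)\|_*^2\le C\|\varphi_2(0)-\varphi_1(0)\|_{\mathrm{V}'}^2+C|\overline{\varphi}|^2$, and the bound $|\overline{\varphi}_i(0)|\le C\,\mathcal{E}(z_{0i})^{1/2}$ (which rewrites $|\overline{\varphi}|^2$ as $|\overline{\varphi}_2(0)-\overline{\varphi}_1(0)|$ times a data‑dependent factor) then yield the asserted estimate, with $\Lambda_0,\Lambda_2$ of the form $C\exp(\int_0^t g)$ and $\Lambda_1(t)=\exp(\int_0^t g)\int_0^t\mathbb{Q}$. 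I expect the main obstacle to be precisely the capillarity term: the naive forms $-\int\varphi\,(\u\cdot\nabla\mu_2)$ or $-\int\varphi_1\,(\u\cdot\nabla\mu)$ would demand an $\mathbb{L}^4$‑ or $\mathrm{V}$‑norm of the difference $\varphi$ (or $\mu$), which is unavailable, so one must route through the pressure‑modulo rewriting and genuinely use $\J\in\mathrm{W}^{1,1}$; a secondary, bookkeeping difficulty is that $\mathcal{B}^{-1}$ acts only on zero‑mean functions, forcing the splitting $\varphi=\widetilde{\varphi}+\overline{\varphi}$ and the careful cancellation/absorption of the mean remainders (as in $\int_\Omega(\u_1\cdot\nabla w)\overline{\varphi}\,\d x=0$). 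A minor technical point is the limited integrability $\varphi_t\in\mathrm{L}^{2-\delta}(0,T;\mathrm{V}')$, handled by deriving the identity for $\|\widetilde{\varphi}\|_*^2$ in integrated form (or after regularization) before passing to the limit.
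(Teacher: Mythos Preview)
The paper does not supply its own proof of this theorem: it is quoted verbatim from Theorem~2 of \cite{unique} (Frigeri--Gal--Grasselli) and no argument is given here, so there is nothing in the present paper to compare your attempt against.

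That said, your outline is exactly the strategy of \cite{unique}: testing the concentration difference against $\mathcal{B}^{-1}\widetilde\varphi$ to exploit the coercivity $(a+\F'')\ge C_0$, testing the velocity difference against $\u$, and---crucially---rewriting the Korteweg force via
\[
\mu_i\nabla\varphi_i=\nabla\Big(\tfrac{a}{2}\varphi_i^2+\F(\varphi_i)-(\J\ast\varphi_i)\varphi_i\Big)-\tfrac12\varphi_i^2\,\nabla a+\varphi_i(\nabla\J\ast\varphi_i),
\]
so that, after pairing with a divergence‑free $\u$, no derivatives of $\varphi_i$ remain and the difference can be closed in $\G_{\text{div}}\times\mathrm{V}'$. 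Your identification of this step as the main obstacle is correct, and your treatment of the trilinear term $\int(\u_1\cdot\nabla\mathcal{B}^{-1}\widetilde\varphi)\widetilde\varphi$ via the $\nabla w\otimes\nabla w$ identity, together with the mean‑splitting $\varphi=\widetilde\varphi+\overline\varphi$ and the observation that $\overline\varphi$ is conserved, matches the original argument. The only cosmetic point is that in the final packaging one usually keeps $\tfrac{\nu}{4}\|\nabla\u\|^2$ rather than $\tfrac{\nu}{2}$, consistent with the statement; your absorption constants leave enough room for that.
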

The following theorem gives the existence and uniqueness of strong solution for the uncontrolled system  \eqref{nonlin phi}-\eqref{initial conditions}. 
\begin{theorem}[Global Strong Solution, Theorem 2, \cite{strong}]\label{strongsol}
	Let $\h\in \mathrm{L}^2_{\text{loc}}([0,\infty);\G_{\text{div}})$, $\u_0\in\V_{\text{div}}$, $\varphi_0\in\mathrm{V}\cap\mathrm{L}^{\infty}(\Omega)$ be given and the Assumption \ref{prop of F and J} be satisfied. Then, for a given $T > 0$, there exists \emph{a unique weak solution} $(\u,\varphi)$ of  \eqref{nonlin phi}-\eqref{initial conditions}  such that
	\begin{equation}\label{0.22}
	\left\{
	\begin{aligned}
	& \u\in \mathrm{L}^{\infty}(0,T;\V_{\text{div}})\cap \mathrm{L}^2(0,T;\H^2(\Omega)), \ \ \varphi\in \mathrm{L}^{\infty}((0,T) \times\Omega)\cap \mathrm{L}^{\infty}(0,T;\mathrm{V}),\\ &\u_t\in\mathrm{L}^2(0,T;\G_{\text{div}}), \ \ \varphi_t\in\mathrm{L}^2(0,T;\mathrm{H}).
	\end{aligned}
	\right.
	\end{equation}
	Furthermore, suppose in addition that $\F \in\C^3(\mathbb{R}),\ a \in\mathrm{H}^2(\Omega)$ and that $\varphi_0\in \mathrm{H}^2(\Omega)$. Then, the system \eqref{nonlin phi}-\eqref{initial conditions}  admits \emph{a unique strong solution} on $[0, T ]$ satisfying (\ref{0.22}) and also
	\begin{equation}
	\label{0.23}\left\{
	\begin{aligned}
	&\varphi \in\mathrm{L}^{\infty}(0,T;\mathrm{W}^{1,p}),\ 2\leq p<\infty,\\ &\varphi_t\in \mathrm{L}^{\infty}(0,T;\mathrm{H})\cap\mathrm{L}^2(0,T;\mathrm{V}).
	\end{aligned}
	\right.
	\end{equation}
	If $\J \in\mathrm{W}^{2,1}(\mathbb{R}^2;\R)$, we have in addition
	\begin{equation}\label{0.24}\varphi\in \mathrm{L}^{\infty}(0, T ; \mathrm{H}^2(\Omega)). \end{equation}
\end{theorem}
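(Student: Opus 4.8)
The plan is to build on the two preceding theorems. Theorems \ref{exist} and \ref{unique} already give a unique weak solution of \eqref{nonlin phi}--\eqref{initial conditions} satisfying \eqref{sol}, and since every strong solution is in particular a weak solution, the uniqueness assertions in the statement come for free; what remains is to upgrade the regularity to \eqref{0.22}--\eqref{0.24}. I would obtain the higher-order a priori bounds on a Faedo--Galerkin scheme — projecting \eqref{nonlin u} onto $\operatorname{span}\{\me_1,\dots,\me_m\}$ and \eqref{nonlin phi}--\eqref{mu} onto the span of the first $m$ eigenfunctions of $\mathcal B$ — so that the manipulations below are legitimate, then pass to the limit via the Aubin--Lions lemma and weak/weak-$*$ lower semicontinuity of norms; the limit is a weak solution, hence by Theorem \ref{unique} it coincides with the one at hand and inherits the bounds.

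\emph{Step 1: propagation of the $\mathrm L^\infty$-bound on $\varphi$.} Since \eqref{mu} is algebraic in $\varphi$, the phase field gets no smoothing from $\Delta\mu$ directly. Instead I would rewrite \eqref{nonlin phi}--\eqref{mu} in the divergence form $\varphi_t-\Div\big((a+\F''(\varphi))\nabla\varphi\big)=-\u\cdot\nabla\varphi+\Div(\nabla\J\ast\varphi)-\Div(\varphi\nabla a)$, which is uniformly parabolic by Assumption \ref{prop of F and J}(2) because $a+\F''(\varphi)\ge C_0$. Testing with $\abs{\varphi}^{p-2}\varphi$ (the transport term vanishes as $\operatorname{div}\u=0$), and using $\u\in\mathrm L^\infty(0,T;\G_{\text{div}})\cap\mathrm L^2(0,T;\V_{\text{div}})$ together with the convolution smoothing (only $\J\in\mathrm W^{1,1}$ is needed here), gives a Gronwall inequality for $\|\varphi(t)\|_{\mathrm L^p(\Omega)}$ with constants that stay under control as $p\to\infty$; an Alikakos--Moser iteration then yields $\varphi\in\mathrm L^\infty((0,T)\times\Omega)$ with a bound depending only on the data. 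Hence $\mu=a\varphi+\F'(\varphi)-\J\ast\varphi\in\mathrm L^\infty(0,T;\mathrm L^\infty(\Omega))$, and the pointwise identity $\nabla\varphi=(a+\F''(\varphi))^{-1}\big(\nabla\mu-\varphi\nabla a+\nabla\J\ast\varphi\big)$ gives $\|\nabla\varphi\|\lesssim\|\nabla\mu\|+1$.

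\emph{Step 2: the coupled $\V_{\text{div}}$--$\mathrm V$ estimate and the remaining regularity.} I would then test \eqref{nonlin u} with $\A\u$ and \eqref{nonlin phi} with $\mu_t$ (this is where $\F\in\C^3$ and $a\in\mathrm H^2$ enter, to make sense of and control the terms with $\F'''(\varphi)\abs{\nabla\varphi}^2$ and $\varphi\Delta a$) and add. The first identity produces $\tfrac12\tfrac{\d}{\d t}\|\nabla\u\|^2+\nu\|\A\u\|^2$ (the pressure drops, and $\h\in\mathrm L^2_{\text{loc}}([0,\infty);\G_{\text{div}})$ pairs harmlessly with $\A\u\in\mathrm H$), the second produces $\tfrac12\tfrac{\d}{\d t}\|\nabla\mu\|^2$ together with a dissipative control of $\|\varphi_t\|$ modulo a harmless lower-order convolution term. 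Three products need care: the convective term, bounded by Ladyzhenskaya's and Agmon's inequalities as $\abs{b(\u,\u,\A\u)}\le C\|\u\|^{1/2}\|\nabla\u\|\,\|\A\u\|^{3/2}$ and absorbed into $\nu\|\A\u\|^2$ with residual coefficient a multiple of $\|\nabla\u\|^2$ (integrable by \eqref{energy}); the Korteweg term, rewritten via $\mathrm P(\mu\nabla\varphi)=-\mathrm P(\varphi\nabla\mu)$ so that $(\mu\nabla\varphi,\A\u)=-(\varphi\nabla\mu,\A\u)$ is bounded by $\|\varphi\|_{\mathrm L^\infty}\|\nabla\mu\|\,\|\A\u\|$ (by Step 1) and absorbed into $\nu\|\A\u\|^2$, its residue a multiple of $\|\nabla\mu\|^2$; and the transport term $(\u\cdot\nabla\varphi,\mu_t)$, split through $\mu_t=(a+\F''(\varphi))\varphi_t-\J\ast\varphi_t$ and, using $\|\u\|_{\mathbb L^\infty}\le C\|\u\|^{1/2}\|\A\u\|^{1/2}$, bounded by $C\|\u\|^{1/2}\|\A\u\|^{1/2}\|\nabla\varphi\|\,\|\varphi_t\|$, then absorbed into $\nu\|\A\u\|^2+C_0\|\varphi_t\|^2$ with residual coefficient a multiple of $\|\nabla\varphi\|^2$ (integrable since $\varphi\in\mathrm L^2(0,T;\mathrm V)$). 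This leads to a differential inequality
\begin{equation*}
\frac{\d}{\d t}\Big(\|\nabla\u\|^2+\|\nabla\mu\|^2\Big)+c\Big(\|\A\u\|^2+\|\varphi_t\|^2\Big)\le C\,\zeta(t)\Big(1+\|\nabla\u\|^2+\|\nabla\mu\|^2\Big),\qquad\zeta\in\mathrm L^1(0,T),
\end{equation*}
so Gronwall gives $\u\in\mathrm L^\infty(0,T;\V_{\text{div}})\cap\mathrm L^2(0,T;\H^2(\Omega))$ ($\H^2$ from $\int_0^T\|\A\u\|^2\d t<\infty$ and elliptic regularity for $\A$), $\varphi\in\mathrm L^\infty(0,T;\mathrm V)$, $\varphi_t\in\mathrm L^2(0,T;\mathrm H)$, and then $\u_t\in\mathrm L^2(0,T;\G_{\text{div}})$ by comparison in \eqref{nonlin u}; this is \eqref{0.22}. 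Next, \eqref{0.23} follows from the identity for $\nabla\varphi$ with $\nabla\mu\in\mathrm L^\infty(0,T;\mathrm H)$ and the Gagliardo--Nirenberg inequality \eqref{2.3} (giving $\varphi\in\mathrm L^\infty(0,T;\mathrm W^{1,p})$ for every $2\le p<\infty$), plus a now-legitimate further testing of \eqref{nonlin phi} upgrading $\varphi_t$ to $\mathrm L^\infty(0,T;\mathrm H)\cap\mathrm L^2(0,T;\mathrm V)$; and \eqref{0.24}, when $\J\in\mathrm W^{2,1}(\mathbb R^2;\R)$, by differentiating \eqref{mu} once more to write $\Delta\varphi$ through $\Delta\mu=\varphi_t+\u\cdot\nabla\varphi$, $\nabla\varphi$, $\varphi\Delta a$ and $\nabla^2\J\ast\varphi$, all in $\mathrm L^\infty(0,T;\mathrm H)$, so $\varphi\in\mathrm L^\infty(0,T;\mathrm H^2(\Omega))$.

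\emph{The main obstacle} is Step 2. In two dimensions the Korteweg force $\mu\nabla\varphi$ is borderline and the $\V_{\text{div}}$-estimate for $\u$ cannot be decoupled from the $\mathrm V$-estimate for $\varphi$ — each variable sits inside the worst term of the other — so the multipliers $\A\u$ and $\mu_t$ and the splitting of the critical products must be chosen so that every dangerous term is genuinely \emph{absorbed} into the dissipation $\nu\|\A\u\|^2+C_0\|\varphi_t\|^2$ and the surviving Gronwall coefficient is integrable in time. This in turn hinges on the $\mathrm L^\infty$-bound of Step 1, which is itself delicate precisely because the nonlocal relation \eqref{mu} provides no regularization of $\varphi$. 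Uniqueness, by contrast, is already delivered by Theorem \ref{unique}.
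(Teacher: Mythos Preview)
The paper does not prove this theorem: it is quoted verbatim from \cite{strong} (Frigeri--Grasselli--Krej\v{c}\'i) and stated without proof, so there is no argument in the present paper to compare your proposal against. Your outline follows the standard strategy of that reference --- first an $\mathrm L^\infty$ bound on $\varphi$ via the uniformly parabolic rewriting of \eqref{nonlin phi}--\eqref{mu} and Moser iteration, then a coupled higher-order energy estimate (test with $\A\u$ and $\mu_t$) closed by Gronwall, with uniqueness inherited from Theorem~\ref{unique} --- and is a faithful sketch of how such results are obtained.
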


\begin{remark}\label{rem2.12}
	The regularity properties given in (\ref{0.22})-(\ref{0.24}) imply that
	\begin{equation}\label{ues}
	\u\in\C  ([0, T] ; \V_{\text{div}}), \ \varphi \in \C ( [0, T ]; \mathrm{V} )  \cap \C_w ( [0, T ]; \mathrm{H}^ 2(\Omega)).
	\end{equation}
	Moreover we also have strong continuity in time, that is,	
	\begin{equation}
	\varphi\in\C([0,T];\mathrm{H}^2(\Omega)).
	\end{equation}
\end{remark}
The following weak-strong uniqueness result (see Theorem 6, \cite{unique}), Theorem \ref{thm2.16} (see Lemma 2.6, \cite{ControlCHNS}) and Remark \ref{imp} (see below) are used in section \ref{se4}. We can show the following two results by relaxing the condition on $\J$ (see Definition 2, \cite{unique}), since the CHNS system under our consideration is having constant viscosity.
\begin{theorem}[Weak-Strong Uniqueness, Theorem 6, \cite{unique}]\label{weakstrong}
	Let the Assumption \ref{prop of F and J} be satisfied. Let $\u_0\in \mathbb{G}_{\text{div}}$, $\varphi_0\in\mathrm{V}\cap\mathrm{L}^{\infty}(\Omega)$ and let $(\u_1,\varphi_1)$ be a weak solution and $(\u_2,\varphi_2)$ a strong solution satisfying \eqref{0.22}, both corresponding to $(\u_0,\varphi_0)$ and to the same external force $\mathbf{h}\in\mathrm{ L}^ 2 ( 0 , T ; \G_{\text{div}} )$. Then, $\u_1 = \u_2$ and $\varphi_1=\varphi_2$, satisfying the following differential inequality:
\begin{equation}
\frac{1}{2}\frac{\d}{\d t}\left(\|\u\|^2+\|\varphi\|^2\right)+\frac{\nu}{2}\|\nabla\u\|^2+\frac{C_0}{4}\|\nabla\varphi\|^2\leq \Pi\left(\|\u\|^2+\|\varphi\|^2\right),
\end{equation}
where $\u=\u_1-\u_2$, $\varphi=\varphi_1-\varphi_2$ and the function $\Pi$ is given by 
\begin{equation*}
\Pi=C\left(1+\|\nabla\u_2\|^2\|\u_2\|_{\mathbb{H}^2}^2+\|\nabla\u_1\|^2+\|\varphi_1\|_{\mathrm{L}^4}^2+\|\varphi_2\|_{\mathrm{L}^4}^2+\|\nabla\varphi_2\|_{\mathbb{L}^4}^2+\|\nabla\varphi_2\|_{\mathbb{L}^4}^4\right).
\end{equation*}
\end{theorem}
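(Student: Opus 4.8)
The plan is to follow the classical weak–strong uniqueness scheme for coupled hydrodynamic systems, adapted to the nonlocal Cahn--Hilliard--Navier--Stokes structure. First I would set $\u := \u_1 - \u_2$ and $\varphi := \varphi_1 - \varphi_2$, and derive the equations satisfied by the differences. For the velocity part, subtracting the weak formulation of $\u_1$ from the strong formulation of $\u_2$ gives, after testing with $\u$, an energy identity of the form $\frac12\frac{\d}{\d t}\|\u\|^2 + \nu\|\nabla\u\|^2 = -b(\u,\u_2,\u) + (\text{capillarity difference}, \u) + (\text{no forcing difference})$, where the convective term was split using $b(\u_1,\u_1,\u) - b(\u_2,\u_2,\u) = b(\u,\u_2,\u)$ together with $b(\u_1,\u,\u)=0$. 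For the phase part, since $\varphi$ satisfies an equation of parabolic type with the nonlocal chemical potential $\mu$, the natural test function is $\mathcal{B}^{-1}\varphi$ when $\overline{\varphi}=0$ (note $\overline{\varphi_1}=\overline{\varphi_2}$ by conservation of mass, so $\overline{\varphi}=0$), which produces $\frac12\frac{\d}{\d t}\|\varphi\|^2_{\mathrm{V}'}$ on the left; alternatively, and as suggested by the target inequality which carries a clean $\|\varphi\|^2$ and a $\frac{C_0}{4}\|\nabla\varphi\|^2$ term, I would test the $\varphi$-equation with $\varphi$ itself and use Assumption~\ref{prop of F and J}(2), i.e. $\F''(s)+a(x)\ge C_0$, to obtain $(\nabla\rho(\cdot,\varphi_1)-\nabla\rho(\cdot,\varphi_2),\nabla\varphi)\ge C_0\|\nabla\varphi\|^2 - (\text{lower order terms involving }\nabla a, \nabla\J)$, absorbing half of it into the left side to leave $\frac{C_0}{4}\|\nabla\varphi\|^2$.

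The heart of the argument is the estimation of the cross terms coupling the two equations, in particular the capillarity/Korteweg term $\mu_1\nabla\varphi_1 - \mu_2\nabla\varphi_2$ tested against $\u$, and the transport term $\u_1\cdot\nabla\varphi_1 - \u_2\cdot\nabla\varphi_2$ appearing in the $\varphi$-equation. These I would split as $\mu_1\nabla\varphi_1-\mu_2\nabla\varphi_2 = \mu\nabla\varphi_1 + \mu_2\nabla\varphi$ (and similarly expand $\mu = a\varphi - \J\ast\varphi + \F'(\varphi_1)-\F'(\varphi_2)$, using the $C^2$ regularity of $\F$ and the $\mathrm{L}^\infty$ bound on $\varphi_1,\varphi_2$ from \eqref{0.22} to get $\|\F'(\varphi_1)-\F'(\varphi_2)\|\le C\|\varphi\|$), and the transport term as $\u\cdot\nabla\varphi_1 + \u_2\cdot\nabla\varphi$. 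Each resulting term is then controlled by Hölder, the Ladyzhenskaya inequality \eqref{be}-type bounds, the Gagliardo--Nirenberg inequalities from Lemma~\ref{gn}, and Young's inequality, in such a way that the top-order pieces $\nu\|\nabla\u\|^2$ and $C_0\|\nabla\varphi\|^2$ are partially absorbed into the left-hand side while the remainder is bounded by $\Pi\cdot(\|\u\|^2+\|\varphi\|^2)$ with $\Pi$ exactly the quantity displayed in the statement — note the appearance of $\|\nabla\u_2\|^2\|\u_2\|_{\mathbb{H}^2}^2$ (from estimating $b(\u,\u_2,\u)$ using Agmon's inequality \eqref{agm} on $\u_2$ and a $\nabla\u$ absorption), $\|\nabla\u_1\|^2$ (from the transport coupling), and the $\varphi_i$-norms $\|\varphi_i\|_{\mathrm{L}^4}^2$, $\|\nabla\varphi_2\|_{\mathbb{L}^4}^2$, $\|\nabla\varphi_2\|_{\mathbb{L}^4}^4$ (from the capillarity and transport terms, using $\varphi_2\in\mathrm{L}^\infty(0,T;\mathrm{W}^{1,p})$ from \eqref{0.23}).

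Once the differential inequality $\frac12\frac{\d}{\d t}(\|\u\|^2+\|\varphi\|^2) + \frac{\nu}{2}\|\nabla\u\|^2 + \frac{C_0}{4}\|\nabla\varphi\|^2 \le \Pi(\|\u\|^2+\|\varphi\|^2)$ is established, I would verify that $\Pi \in \mathrm{L}^1(0,T)$: this uses $\u_1,\u_2\in\mathrm{L}^2(0,T;\V_{\text{div}})$ and the strong-solution regularity $\u_2\in\mathrm{L}^2(0,T;\H^2(\Omega))$ (so $\|\nabla\u_2\|^2\|\u_2\|_{\mathbb{H}^2}^2 \le \|\nabla\u_2\|_{\mathrm{L}^\infty_t}^2\|\u_2\|_{\mathrm{L}^2_t\mathbb{H}^2}^2 < \infty$ after using $\u_2\in\mathrm{L}^\infty(0,T;\V_{\text{div}})$), together with $\varphi_2\in\mathrm{L}^\infty(0,T;\mathrm{W}^{1,p})$ for all finite $p$. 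Since $\u(0)=\varphi(0)=0$, Grönwall's lemma then forces $\u\equiv 0$ and $\varphi\equiv 0$ on $[0,T]$, which is the claimed uniqueness. The main obstacle I anticipate is the careful bookkeeping in the capillarity term: because $\mu\nabla\varphi$ can be less regular than the Navier--Stokes nonlinearity (as the introduction stresses), one must use the strong-solution regularity of the \emph{second} solution judiciously — always putting the gradient loss on $\u_2$ or $\varphi_2$, never on the difference — and the relaxed hypothesis on $\J$ mentioned just before the statement (exploiting constant viscosity) is precisely what lets the nonlocal terms $a\varphi$ and $\J\ast\varphi$ be absorbed as lower-order perturbations via Young's inequality for convolutions.
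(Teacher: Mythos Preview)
The paper does not actually prove this theorem: it is quoted verbatim as Theorem~6 of \cite{unique} and only the statement (with the differential inequality) is recorded, with the remark that the result carries over under the relaxed hypothesis on $\J$ because the viscosity here is constant. There is therefore no proof in the paper to compare your attempt against.

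That said, your outline is the standard route one finds in \cite{unique}: subtract the two formulations, test the velocity difference with $\u$ and the concentration difference with $\varphi$, exploit the structural coercivity $\F''(s)+a(x)\ge C_0$ from Assumption~\ref{prop of F and J}(2) to produce the $\frac{C_0}{4}\|\nabla\varphi\|^2$ term, split the capillarity and transport differences so that the higher regularity always falls on the strong solution $(\u_2,\varphi_2)$, absorb half of the dissipative terms, and collect the residuals into the Gr\"onwall factor $\Pi$. Your identification of which norms of $\u_1,\u_2,\varphi_1,\varphi_2$ must appear in $\Pi$ and why $\Pi\in\mathrm{L}^1(0,T)$ is accurate.
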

\begin{theorem}[Lemma 2.6, \cite{ControlCHNS}]\label{thm2.16}
Let the Assumption \ref{prop of F and J} be satisfied. Let $\u_0\in \mathbb{G}_{\text{div}}$, $\varphi_0\in\mathrm{V}\cap\mathrm{L}^{\infty}(\Omega)$ and let $(\u_1,\varphi_1)$  and $(\u_2,\varphi_2)$ be two strong solutions satisfying \eqref{0.22}, both corresponding to $(\u_0,\varphi_0)$ and to controls  $\mathrm{U}_1,\mathrm{U}_2\in\mathrm{ L}^ 2 ( 0 , T ; \G_{\text{div}} )$, respectively. Then,  we have 
\begin{align}
&\|\u_1-\u_2\|^2_{\C([0,T];\G_{\text{div}})}+\|\u_1-\u_2\|^2_{\mathrm{L}^{2}(0,T;\V_{\text{div}})}+\|\varphi_1-\varphi_2\|_{\C([0,T];\mathrm{V})}^2\nonumber\\&\quad +\|\varphi_1-\varphi_2\|_{\mathrm{L}^2(0,T;\mathrm{H}^2)}^2+\|\varphi_1-\varphi_2\|_{\mathrm{H}^1(0,T;\mathrm{H})}^2\nonumber\\&\leq \mathbb{Q}_1\left(\|\U_1\|_{\mathrm{L}^2(0,T;\G_{\text{div}})},\|\U_2\|_{\mathrm{L}^2(0,T;\G_{\text{div}})}\right)\|\mathrm{U}_1-\U_2\|_{\mathrm{L}^2(0,T;\G_{\text{div}})}^2
\end{align}
where  $\mathbb{Q}_1 : [0, \infty)^2\to[0,\infty)$ is nondecreasing in both its arguments and only depends on the data $\F, \J, \nu, \Omega, T, \u_0$, and $\varphi_0$.
\end{theorem}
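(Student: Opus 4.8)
The plan is to subtract the two systems and carry out two successive Gr\"onwall arguments: first an ``energy-level'' estimate for both unknowns, then a bootstrap to the strong norms of the concentration. Set $\u:=\u_1-\u_2$, $\varphi:=\varphi_1-\varphi_2$, $\mu:=\mu_1-\mu_2$, $\U:=\U_1-\U_2$. Since both solutions start from $(\u_0,\varphi_0)$ we have $\u(0)=0$, $\varphi(0)=0$, and the difference solves
\begin{align*}
\varphi_t+\u\cdot\nabla\varphi_1+\u_2\cdot\nabla\varphi&=\Delta\mu,\qquad \mu=a\varphi-\J\ast\varphi+\big(\F'(\varphi_1)-\F'(\varphi_2)\big),\\
\u_t-\nu\Delta\u+(\u\cdot\nabla)\u_1+(\u_2\cdot\nabla)\u+\nabla(\uppi_1-\uppi_2)&=\mu\nabla\varphi_1+\mu_2\nabla\varphi+\U,\qquad \mathrm{div}\,\u=0,
\end{align*}
with $\partial\mu/\partial\mathbf{n}=0$ and $\u=0$ on $\partial\Omega$. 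By Theorem \ref{strongsol} and Remark \ref{rem2.12}, every norm of the reference solutions $(\u_i,\varphi_i)$ that appears below is bounded by a nondecreasing function of $\|\U_i\|_{\mathrm{L}^2(0,T;\G_{\text{div}})}$ and the fixed data $\F,\J,\nu,\Omega,T,\u_0,\varphi_0$, and such quantities will be absorbed into a generic function $\mathbb{Q}_1$ of the required type. I would also record that, since $\varphi_1,\varphi_2\in\mathrm{L}^\infty((0,T)\times\Omega)$, the mean value theorem gives $|\F'(\varphi_1)-\F'(\varphi_2)|+|\F''(\varphi_1)-\F''(\varphi_2)|\leq C|\varphi|$, whence $\|\mu\|\leq C\|\varphi\|$ and, using $\nabla\mu=(a+\F''(\varphi_1))\nabla\varphi+\varphi\nabla a-\nabla\J\ast\varphi+(\F''(\varphi_1)-\F''(\varphi_2))\nabla\varphi_2$, also $\|\nabla\mu\|\leq C\|\varphi\|_{\mathrm{V}}$.

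For the energy estimate I would take the $\G_{\text{div}}$-inner product of the velocity equation with $\u$ and the $\mathrm{H}$-inner product of the concentration equation with $\varphi$ and add. The convective terms reduce to $b(\u,\u_1,\u)$ since $b(\u_2,\u,\u)=0$ and $\int_\Omega(\u_2\cdot\nabla\varphi)\varphi\,\d x=0$, and $\langle\Delta\mu,\varphi\rangle=-(\nabla\mu,\nabla\varphi)=-\int_\Omega(a+\F''(\varphi_1))|\nabla\varphi|^2\,\d x+(\text{lower order})\leq-C_0\|\nabla\varphi\|^2+(\text{lower order})$ by Assumption \ref{prop of F and J}(2); the remaining coupling term $(\mu\nabla\varphi_1+\mu_2\nabla\varphi,\u)$ is handled after an integration by parts, using $\mu_2\in\mathrm{L}^\infty(0,T;\mathrm{V})$ and $\nabla\varphi_1\in\mathrm{L}^\infty(0,T;\mathbb{L}^4)$. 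Using the two-dimensional Ladyzhenskaya and Gagliardo--Nirenberg inequalities, everything except the control is absorbed into $\tfrac{\nu}{2}\|\nabla\u\|^2+\tfrac{C_0}{2}\|\nabla\varphi\|^2+\Pi(t)\big(\|\u\|^2+\|\varphi\|^2\big)$ with $\Pi\in\mathrm{L}^1(0,T)$ depending only on the reference norms, while $(\U,\u)\leq\tfrac12\|\U\|^2+\tfrac12\|\u\|^2$. Gr\"onwall's lemma with zero initial data then yields
\[
\|\u\|^2_{\C([0,T];\G_{\text{div}})}+\|\u\|^2_{\mathrm{L}^2(0,T;\V_{\text{div}})}+\|\varphi\|^2_{\C([0,T];\mathrm{H})}+\|\varphi\|^2_{\mathrm{L}^2(0,T;\mathrm{V})}\leq\mathbb{Q}_1\|\U\|^2_{\mathrm{L}^2(0,T;\G_{\text{div}})},
\]
which is the first two terms of the assertion.

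Next I would upgrade the concentration bound. Using the formula for $\nabla\mu$, the difference equation for $\varphi$ becomes the quasilinear, uniformly parabolic equation $\varphi_t-\mathrm{div}\big((a+\F''(\varphi_1))\nabla\varphi\big)=-\u\cdot\nabla\varphi_1-\u_2\cdot\nabla\varphi+\mathrm{div}\big(\varphi\nabla a-\nabla\J\ast\varphi+(\F''(\varphi_1)-\F''(\varphi_2))\nabla\varphi_2\big)$, whose principal coefficient $\lambda:=a+\F''(\varphi_1)\geq C_0$ lies in $\mathrm{L}^\infty(0,T;\mathrm{W}^{1,p})$ with $\lambda_t=\F'''(\varphi_1)\varphi_{1,t}\in\mathrm{L}^\infty(0,T;\mathrm{H})$ (here $\F\in\C^3$, $a\in\mathrm{H}^2$ and the regularity \eqref{0.23} of the reference solutions are used). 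Testing with $\varphi_t$ and using $\partial\mu/\partial\mathbf{n}=0$ gives, on the left, $\|\varphi_t\|^2+\tfrac12\tfrac{\d}{\d t}\int_\Omega\lambda|\nabla\varphi|^2\,\d x$, with $\int_\Omega\lambda|\nabla\varphi|^2\,\d x\geq C_0\|\nabla\varphi\|^2$; the term $\tfrac12\int_\Omega\lambda_t|\nabla\varphi|^2\,\d x$ is controlled by $C\|\varphi_{1,t}\|\,\|\nabla\varphi\|_{\mathbb{L}^4}^2\leq\eta\,\|\varphi\|^2_{\mathrm{H}^2}+C\|\nabla\varphi\|^2$ via the $2$D interpolation $\|\nabla\varphi\|^2_{\mathbb{L}^4}\leq C\|\nabla\varphi\|\,\|\varphi\|_{\mathrm{H}^2}$, and the right-hand side terms are estimated with Step 1, $\u_2\in\mathrm{L}^2(0,T;\mathbb{L}^\infty)$, $\nabla\varphi_i\in\mathrm{L}^\infty(0,T;\mathbb{L}^4)$ and $a\in\mathrm{H}^2$, again absorbing small multiples of $\|\varphi_t\|^2$ and $\|\varphi\|^2_{\mathrm{H}^2}$. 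The $\mathrm{H}^2$-terms are closed by elliptic regularity applied to $-\mathrm{div}(\lambda\nabla\varphi)=(\text{right-hand side})-\varphi_t\in\mathrm{L}^2(0,T;\mathrm{H})$, in which $\lambda$ is uniformly elliptic with $\mathrm{W}^{1,p}$ regularity and $\varphi$ satisfies the inhomogeneous Neumann relation forced by $\partial\mu/\partial\mathbf{n}=0$ with data controlled in $\mathrm{L}^2(0,T;\mathrm{H}^{1/2}(\partial\Omega))$: this gives $\|\varphi\|^2_{\mathrm{L}^2(0,T;\mathrm{H}^2)}\lesssim\|\varphi_t\|^2_{\mathrm{L}^2(0,T;\mathrm{H})}+\|\varphi\|^2_{\mathrm{L}^2(0,T;\mathrm{V})}+\mathbb{Q}_1\|\U\|^2_{\mathrm{L}^2(0,T;\G_{\text{div}})}$. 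Feeding this back and running Gr\"onwall's lemma with $\varphi(0)=0$ in $\mathrm{V}$ bounds $\|\varphi\|^2_{\C([0,T];\mathrm{V})}+\|\varphi_t\|^2_{\mathrm{L}^2(0,T;\mathrm{H})}+\|\varphi\|^2_{\mathrm{L}^2(0,T;\mathrm{H}^2)}$ by $\mathbb{Q}_1\|\U\|^2_{\mathrm{L}^2(0,T;\G_{\text{div}})}$; combined with the $\mathrm{L}^2(0,T;\mathrm{H})$-bound on $\varphi$ from Step 1 this also controls $\|\varphi\|_{\mathrm{H}^1(0,T;\mathrm{H})}$, and collecting everything gives the claim with $\mathbb{Q}_1$ nondecreasing in both its arguments.

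The hard part is this second step. Because the concentration equation is only quasilinear, the coefficient $a+\F''(\varphi_1)$ must be carried through the energy identity, and the contribution of its time derivative forces a delicate two-dimensional interpolation against the $\mathrm{H}^2$-norm, which is itself only recovered a posteriori by elliptic regularity — so the $\varphi_t$- and $\mathrm{H}^2$-estimates must be closed together. Moreover, since the homogeneous boundary condition is imposed on $\mu$ rather than on $\varphi$, the integrations by parts in the chemical-potential coupling have to be arranged so that the (non-homogeneous) boundary terms are expressed through already-controlled quantities. The remaining work is routine if lengthy Gr\"onwall bookkeeping, carried out in full in \cite{ControlCHNS} along the lines of \cite{strong,unique}.
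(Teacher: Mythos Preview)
The paper does not actually prove this statement: Theorem~\ref{thm2.16} is quoted verbatim as Lemma~2.6 of \cite{ControlCHNS} and is used in the paper without proof (the only comment is that the assumptions on $\J$ can be relaxed here because the viscosity is constant). There is therefore no ``paper's own proof'' to compare against.

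That said, your two-step scheme---first test the difference system with $(\u,\varphi)$ to get the $\C([0,T];\G_{\text{div}})\cap\mathrm{L}^2(0,T;\V_{\text{div}})$ and $\C([0,T];\mathrm{H})\cap\mathrm{L}^2(0,T;\mathrm{V})$ bounds, then test the $\varphi$-equation with $\varphi_t$, carrying the quasilinear coefficient $a+\F''(\varphi_1)\geq C_0$ through the energy identity and closing the $\mathrm{H}^2$ norm by elliptic regularity---is exactly the strategy of \cite{ControlCHNS} (and mirrors the strong-solution estimates of \cite{strong}). Your identification of the genuinely delicate points (the $\lambda_t$ contribution handled by the 2D interpolation $\|\nabla\varphi\|_{\mathbb{L}^4}^2\leq C\|\nabla\varphi\|\|\varphi\|_{\mathrm{H}^2}$, and the fact that the Neumann condition lives on $\mu$ rather than on $\varphi$) is accurate, and the bookkeeping you outline goes through as in the cited reference.
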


\begin{remark}\label{imp}
	If $(\u_1,\varphi_1)$ and $(\u_2,\varphi_2)$ are two strong solutions of the system \eqref{nonlin phi}-\eqref{initial conditions} corresponding to the controls $\U_1$ and $\U_2$ respectively. Then , from the Theorem \ref{thm2.16}, we also infer that 
	\begin{equation*}
	\|\varphi_1-\varphi_2\|_{\mathrm{L}^p(0,T;\mathrm{V})}\leq C\|\U_1-\U_2\|_{\mathrm{L}^2(0,T;\G_{\text{div}})}, \; \; \text{for all} \; \; p > 1
	\end{equation*} 
\end{remark}

\subsection{Existence and Uniqueness of the Linearized System}\label{se3}
Our goal in this section is to establish the existence of an optimal control for the system \eqref{nonlin phi}-\eqref{initial conditions} with an appropriate cost functional. From the well known theory for the optimal control problems  governed by partial differential equations, we know that the optimal control is derived in terms of the adjoint variable which satisfies a linear system. As a first step towards our goal, we linearize the nonlinear system and obtain the existence and uniqueness of weak solution for the linearized system using a Faedo-Galerkin approximation technique.  

Let us linearize the equations \eqref{nonlin phi}-\eqref{initial conditions}  around $(\widehat{\u}, \widehat{\varphi})$ which is the \emph{unique weak solution} of system (\ref{nonlin phi})-(\ref{initial conditions}) with control term $\U=0$ (uncontrolled system), external forcing $\widehat{\h}$, and initial datum $\widehat{\u}_0$ and $\widehat{\varphi}_0$ are such that
$$\widehat{\h}\in \mathrm{L}^2(0,T;\G_{\text{div}}),\ \widehat{\u}_0\in\V_{\text{div}},\ \widehat{\varphi}_0\in\mathrm{V}\cap\mathrm{L}^{\infty}(\Omega).$$ Thus, using the Theorem  \ref{strongsol}, we know that $(\widehat{\u}, \widehat{\varphi})$ is also a strong solution of the system (\ref{nonlin phi})-(\ref{initial conditions}).

Let us now rewrite the equation \eqref{nonlin u}. We know that 
\begin{align}\label{mh} \mu\nabla \varphi = (a\varphi - \J\ast \varphi + \F'(\varphi)) \nabla \varphi= \nabla \left(\F(\varphi) + a\frac{\varphi^2}{2}\right) - \nabla a\frac{\varphi^2}{2} - (\J\ast \varphi)\nabla \varphi. \end{align}
Hence we can rewrite \eqref{nonlin u} as
\begin{equation}\label{nonlin u rewritten}
\u_t - \nu \Delta \u + (\u\cdot \nabla )\u + \nabla \widetilde{\uppi}_{\u} = - \nabla a\frac{\varphi^2}{2} - (\J\ast \varphi)\nabla \varphi +\h + \U,
\end{equation} 
where $\widetilde{\uppi}_{\u} = \uppi -\left( \F(\varphi) + a\frac{\varphi^2}{2}\right)$. Since the pressure is also an unknown quantity, in order to linearize, we substitute $\u= \w+ \widehat{\u},$ $\uppi=\widetilde\uppi+\widehat{\uppi}$ and $\varphi = \psi + \widehat{\varphi}$ in  \eqref{nonlin u rewritten} and \eqref{nonlin phi} to  get
$$\w_t - \nu \Delta \w + (\w \cdot \nabla)\widehat{\u} + (\widehat{\u} \cdot \nabla )\w + \nabla \widetilde{\uppi}_\w = -\nabla a\psi \widehat{\varphi} -(\J\ast \psi) \nabla \widehat{\varphi} - (\J \ast \widehat{\varphi}) \nabla \psi +\widetilde \h +\U, $$ 
where $ \widetilde{\uppi}_\w = \widetilde{\uppi}-(\F'(\widehat{\varphi})+a\widehat{\varphi})\psi$, $\widetilde{\h}=\h-\widehat{\h}$. Also, we have 
$$\psi_t + \w\cdot \nabla \widehat{\varphi} + \widehat{\u} \cdot \nabla \psi = \Delta \widetilde{\mu} $$   
where $\widetilde{\mu} = a \psi - \J\ast \psi + \F''( \widehat{\varphi}) \psi$.
Hence, we consider the following linearized system:
	\begin{align}
	\w_t - \nu \Delta \w + (\w \cdot \nabla )\widehat{\u} + (\widehat{\u} \cdot \nabla )\w + \nabla \widetilde{\uppi}_\w &= -\nabla a\psi \widehat{\varphi} -(\J\ast \psi) \nabla \widehat{\varphi} \nonumber \\ 
&\qquad	- (\J \ast \widehat{\varphi}) \nabla \psi + \widetilde\h +\U, \label{lin w} \\
	\psi_t + \w\cdot \nabla \widehat{\varphi} + \widehat{\u} \cdot\nabla \psi &= \Delta \widetilde{\mu}, \label{lin psi} \\ 
	\widetilde{\mu} &= a \psi - \J\ast \psi + \F''(\widehat{\varphi})\psi, \label{lin mu}\\
	\text{div }\w &= 0, \label{lin div zero}\\
	\frac{\partial \widetilde{\mu}}{\partial\mathbf{n}} &= 0, \ \w=0  \ \text{on } \ \partial \Omega \times (0,T), \label{lin boundary conditions}\\
	\w(0) &= \w_0, \ \psi(0) = \psi _0 \ \text{ in } \ \Omega. \label{lin initial conditions}
	\end{align} 
Note that in \eqref{lin mu}, we used Taylor's formula:
$$\F'(\psi+\widehat{\varphi})=\F'(\widehat{\varphi})+\F''(\widehat{\varphi})\psi+\F'''(\widehat{\varphi}+\theta\psi)\frac{\psi^2}{2},$$ for $0<\theta<1$, and ignored the second order terms in $\psi$, since we are considering a linear system, and in particular $\widehat{\varphi}\in\mathrm{L}^{\infty}((0,T)\times\Omega)$ and $\F(\cdot)$ has a polynomial growth as discussed in Remark \ref{remark F}. 
 
Next, we discuss the unique global solvability results for the system (\ref{lin w})-(\ref{lin initial conditions}).

\begin{theorem}[Existence and Uniqueness of Linearized System]\label{linearized}
	Suppose that the Assumption \ref{prop of F and J} is satisfied. Let us assume $(\widehat{\u},\widehat{\varphi})$ is the unique strong solution of the system \eqref{nonlin phi}-\eqref{initial conditions} with the regularity given in (\ref{0.22}) and $\F \in\C^3(\mathbb{R}),\ a \in\mathrm{H}^2(\Omega).$ Let $\w_0 \in \G_{\text{div}}$ and $\psi_0 \in \mathrm{H}$ with $\widetilde{\h},\U \in \mathrm{L}^2(0,T;\G_{\text{div}})$.  Then, for a given $T >0$, there exists \emph{a unique weak solution} $(\w,\psi)$ to the system \eqref{lin w}-\eqref{lin initial conditions} such that
	$$\w \in \mathrm{L}^{\infty}(0,T;\G_{\text{div}}) \cap \mathrm{L}^{2}(0,T;\V_{\text{div}})\ \text{ and 
	} \ \psi \in \mathrm{L}^{\infty}(0,T;\mathrm{H}) \cap  \mathrm{L}^{2}(0,T;\mathrm{V}).$$ and for every $\v \in \V_{\text{div}}$ and $\xi \in \mathrm{V}$ and for all $t \in (0,T)$, we have  
\begin{eqnarray}\label{weak_lin}
\left\{
	\begin{aligned}
	 \langle\w_t,\v\rangle + \nu (\nabla\w,\nabla \v) + b(\w,\widehat{\u},\v) &+ b(\widehat{\u},\w,\v)  = - (\nabla a\psi \widehat{\varphi},\v) - ((\J\ast \psi) \nabla \widehat{\varphi},\v)  \\  
&	\quad - ((\J \ast \widehat{\varphi}) \nabla \psi,\v) + (\widetilde\h,\v) + (\U,\v),  \\ 
	\langle\psi_t,\xi\rangle + (\w\cdot \nabla \widehat{\varphi},\xi) + (\widehat{\u} \cdot \nabla \psi,\xi) &= -(\nabla \widetilde{\mu},\nabla \xi), 
	\end{aligned}
	\right.
	\end{eqnarray}
	where $\w(0) = \w_0, \ \psi(0) = \psi _0$ are satisfied in the weak sense. 
\end{theorem}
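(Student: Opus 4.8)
The plan is to construct the solution by a Faedo--Galerkin approximation and to obtain uniqueness from an energy estimate on the difference of two solutions. I would let $\{\me_j\}_{j\geq1}\subset\D(\A)$ denote the eigenfunctions of the Stokes operator and $\{\zeta_j\}_{j\geq1}\subset\D(\mathcal{B})$ those of the Neumann Laplacian $\mathcal{B}$, set $\V_{\text{div}}^n=\mathrm{span}\{\me_1,\dots,\me_n\}$ and $\mathrm{V}^n=\mathrm{span}\{\zeta_1,\dots,\zeta_n\}$, and seek $\w^n(t)=\sum_{j=1}^na_j^n(t)\me_j$, $\psi^n(t)=\sum_{j=1}^nb_j^n(t)\zeta_j$ solving the projection of \eqref{weak_lin} onto $\V_{\text{div}}^n\times\mathrm{V}^n$, where $\widetilde\mu^n:=a\psi^n-\J\ast\psi^n+\F''(\widehat\varphi)\psi^n$ (note $\F''(\widehat\varphi),\F'''(\widehat\varphi)\in\mathrm{L}^{\infty}((0,T)\times\Omega)$ since $\F\in\C^3$ and $\widehat\varphi\in\mathrm{L}^{\infty}((0,T)\times\Omega)$ by \eqref{0.22}). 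Using the regularity of $(\widehat\u,\widehat\varphi)$ from Theorem \ref{strongsol}, this is a linear system of ODEs for $(a_j^n,b_j^n)$ with coefficients in $\mathrm{L}^1(0,T)$ and forcing in $\mathrm{L}^2(0,T)$ (since $\widetilde\h,\U\in\mathrm{L}^2(0,T;\G_{\text{div}})$), so the Carath\'eodory theorem gives a unique absolutely continuous solution on all of $[0,T]$, the linearity ruling out blow-up.

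Next I would derive the a priori bounds by testing the first Galerkin equation with $\w^n$ and the second with $\psi^n$. The terms $b(\widehat\u,\w^n,\w^n)$ and $(\widehat\u\cdot\nabla\psi^n,\psi^n)$ vanish because $\widehat\u$ is divergence free, and $b(\w^n,\widehat\u,\w^n)$ is bounded via the Ladyzhenskaya inequality by $\tfrac\nu4\|\nabla\w^n\|^2+C\|\nabla\widehat\u\|^2\|\w^n\|^2$, with $\|\nabla\widehat\u\|^2\in\mathrm{L}^{\infty}(0,T)$. Expanding $\nabla\widetilde\mu^n$ and using Assumption \ref{prop of F and J}(2), the principal part of $-(\nabla\widetilde\mu^n,\nabla\psi^n)$ is $\leq-C_0\|\nabla\psi^n\|^2$, while the lower-order pieces --- those with $\nabla a$, with $\nabla\J\ast\psi^n$ (handled by Young's convolution inequality and $\J\in\mathrm{W}^{1,1}$), and with $\F'''(\widehat\varphi)\nabla\widehat\varphi\,\psi^n$ (handled with $\widehat\varphi\in\mathrm{L}^{\infty}$, the $\mathrm{L}^p$-bound on $\nabla\widehat\varphi$ from \eqref{0.23}, and Ladyzhenskaya) --- are absorbed into $\tfrac{C_0}{2}\|\nabla\psi^n\|^2$ through Young's inequality. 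The cross terms, chiefly $-((\J\ast\widehat\varphi)\nabla\psi^n,\w^n)$ in the $\w^n$-equation and $(\w^n\cdot\nabla\widehat\varphi,\psi^n)$ in the $\psi^n$-equation, I would split by Young's inequality between $\tfrac\nu2\|\nabla\w^n\|^2$ and $\tfrac{C_0}{2}\|\nabla\psi^n\|^2$, while $(\widetilde\h,\w^n)+(\U,\w^n)\leq\tfrac12(\|\widetilde\h\|^2+\|\U\|^2)+\|\w^n\|^2$. Adding the two estimates should produce
\begin{equation*}
\frac{\d}{\d t}\left(\|\w^n\|^2+\|\psi^n\|^2\right)+\nu\|\nabla\w^n\|^2+C_0\|\nabla\psi^n\|^2\leq g(t)\left(\|\w^n\|^2+\|\psi^n\|^2\right)+C\left(\|\widetilde\h\|^2+\|\U\|^2\right),
\end{equation*}
with $g\in\mathrm{L}^1(0,T)$ depending only on the norms of $(\widehat\u,\widehat\varphi)$; Gronwall's lemma then bounds $\w^n$ in $\mathrm{L}^{\infty}(0,T;\G_{\text{div}})\cap\mathrm{L}^2(0,T;\V_{\text{div}})$ and $\psi^n$ in $\mathrm{L}^{\infty}(0,T;\mathrm{H})\cap\mathrm{L}^2(0,T;\mathrm{V})$ in terms of the data, and consequently $\widetilde\mu^n$ in $\mathrm{L}^2(0,T;\mathrm{V})$. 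Reading off the equations then gives $\w^n_t$ bounded in $\mathrm{L}^2(0,T;\V_{\text{div}}')$ and $\psi^n_t$ bounded in $\mathrm{L}^2(0,T;\mathrm{V}')$.

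With these bounds in hand I would pass to the limit: by Banach--Alaoglu and the Aubin--Lions--Simon lemma, along a subsequence $\w^n\rightharpoonup\w$ weakly-$\ast$ in $\mathrm{L}^{\infty}(0,T;\G_{\text{div}})$, weakly in $\mathrm{L}^2(0,T;\V_{\text{div}})$, and strongly in $\mathrm{L}^2(0,T;\G_{\text{div}})$, with the analogous convergences for $\psi^n\rightharpoonup\psi$, $\widetilde\mu^n\rightharpoonup\widetilde\mu$ weakly in $\mathrm{L}^2(0,T;\mathrm{V})$, and weak convergence of the time derivatives. Since every term in \eqref{weak_lin} is either linear in $(\w^n,\psi^n,\widetilde\mu^n)$ or bilinear with the second argument frozen at $(\widehat\u,\widehat\varphi)$, the weak convergences plus the strong $\mathrm{L}^2$-in-space-time convergence of $\w^n,\psi^n$ let me pass to the limit on test functions from $\bigcup_m(\V_{\text{div}}^m\times\mathrm{V}^m)$, which is dense in $\V_{\text{div}}\times\mathrm{V}$, and the same reasoning gives $\widetilde\mu=a\psi-\J\ast\psi+\F''(\widehat\varphi)\psi$. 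Since $\w\in\mathrm{L}^2(0,T;\V_{\text{div}})$ with $\w_t\in\mathrm{L}^2(0,T;\V_{\text{div}}')$ (and likewise $\psi\in\mathrm{L}^2(0,T;\mathrm{V})$, $\psi_t\in\mathrm{L}^2(0,T;\mathrm{V}')$), the Lions--Magenes lemma gives $\w\in\C([0,T];\G_{\text{div}})$, $\psi\in\C([0,T];\mathrm{H})$, and the usual integration by parts in time identifies $\w(0)=\w_0$, $\psi(0)=\psi_0$. For uniqueness I would apply the same energy computation to $(\w,\psi):=(\w_1-\w_2,\psi_1-\psi_2)$, which solves \eqref{lin w}--\eqref{lin initial conditions} with $\widetilde\h=\U=0$ and zero initial data; there the identity $\langle\w_t,\w\rangle=\tfrac12\tfrac{\d}{\d t}\|\w\|^2$ (and similarly for $\psi$) follows from Lions--Magenes, so the estimate yields $\tfrac{\d}{\d t}(\|\w\|^2+\|\psi\|^2)\leq g(t)(\|\w\|^2+\|\psi\|^2)$ with $g\in\mathrm{L}^1(0,T)$ and hence $\w\equiv0$, $\psi\equiv0$ by Gronwall.

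I expect the main obstacle to be the coupled a priori estimate: the cross terms relating $\nabla\psi^n$ to $\w^n$ and $\w^n$ to $\psi^n$ can be absorbed into the two dissipative terms $\nu\|\nabla\w^n\|^2$ and $C_0\|\nabla\psi^n\|^2$ only because Theorem \ref{strongsol} supplies $\widehat\varphi\in\mathrm{L}^{\infty}((0,T)\times\Omega)$, sufficient $\mathrm{L}^p$-integrability of $\nabla\widehat\varphi$ (via \eqref{0.23}), and $\widehat\u\in\mathrm{L}^{\infty}(0,T;\V_{\text{div}})\cap\mathrm{L}^2(0,T;\H^2(\Omega))$, together with the strict positivity $a+\F''(\widehat\varphi)\geq C_0$ from Assumption \ref{prop of F and J}(2); keeping careful track of which of these is needed for each term, and of the convolution estimates coming from $\J\in\mathrm{W}^{1,1}$, is the delicate part, while the remainder of the argument is routine.
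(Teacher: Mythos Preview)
Your proposal is correct and follows exactly the approach the paper indicates: the paper's proof consists of a single sentence stating that the result follows from a Faedo--Galerkin approximation, referring to Theorem~3.4 of \cite{BDM2} for the details. Your sketch supplies precisely those details---Galerkin approximation, coupled energy estimates exploiting $a+\F''(\widehat\varphi)\geq C_0$ and the strong-solution regularity of $(\widehat\u,\widehat\varphi)$, compactness, and uniqueness by Gronwall---so there is nothing to add.
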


\begin{proof}
Using Galerkin approximation technique, the proof easily follows. [For more details refer Theorem 3.4 of \cite{BDM2}].
\end{proof}

 \section{Optimal Control Problem}\label{se4}
In this section, we formulate a distributed optimal control problem  as the minimization of a suitable cost functional subject to the controlled nonlocal Cahn-Hilliard-Navier-Stokes system. The main aim  is to establish the existence of an optimal control that minimizes the cost functional given below, subject to the constraint  \eqref{nonlin phi}-\eqref{initial conditions}.
The associated cost functional is defined by
\begin{align}\label{cost}
\mathcal{J}(\u,\varphi,\U) := \frac{1}{2} \int_0^T \|\u(t)-\u_d(t)\|^2 \d t&+ \frac{1}{2} \int_0^T \|\varphi(t) -\varphi_d(t)\|^2 \d t\no\\&\qquad+ \frac{1}{2} \int_0^T\|\U(t)\|^2\d t,
\end{align}
where $\u_d(\cdot)\in\mathrm{L}^2(0,T;\V_{\text{div}})$ and $\varphi_d(\cdot)\in\mathrm{L}^2(0,T;\mathrm{V})$ are the desired states. Note that the cost functional is the sum of total energy and total effort by control. 

Let us assume that 
\begin{align}\label{fes}
\F \in\C^5(\mathbb{R}),\ a \in\mathrm{H}^2(\Omega),
\end{align}
and the initial data  
\begin{align}\label{initial}
\u_0\in\V_{\text{div}}\text{ and }\varphi_0\in \mathrm{H}^2(\Omega).
\end{align} By the embedding of $\mathrm{H}^{2}(\Omega)$ in $\mathrm{V}$ and $\mathrm{L}^{\infty}(\Omega)$, the initial concentration $\varphi_0\in \mathrm{H}^2(\Omega)$ implies $\varphi_0\in \mathrm{V}\cap\mathrm{L}^{\infty}(\Omega)$. From now onwards, we assume that along with the Assumption \ref{prop of F and J} condition (\ref{fes}) holds true, so that the system \eqref{nonlin phi}-\eqref{initial conditions} has a unique strong solution. 

We consider set of admissible controls to be the space consisting of controls $\U \in\mathrm{L}^{2}(0,T;\G_{\text{div}})$, and denote it by $\mathscr{U}_{\text{ad}}$.
\iffalse	
\begin{example} \label{uad}
 As an example one can take $\mathscr{U}_{\text{ad}}$ to be $\mathrm{L}^{2}(0,T;B(0,R))$ where $B(0,R)$ is a ball of radius $R$ and centre at $0$ in $\G_{\text{div}}$ . 
\end{example}
\fi 

\begin{definition}[Admissible Class]\label{definition 1}
	The \emph{admissible class} $\mathscr{A}_{\text{ad}}$ of triples $(\u,\varphi,\U)$ is defined as the set of states $(\u,\varphi)$ with initial data satisfies (\ref{initial}), solving the system \eqref{nonlin phi}-\eqref{initial conditions} with control $\U \in \mathscr{U}_{ad}.$ That is,
	\begin{align*}
	\mathscr{A}_{\text{ad}}:=\Big\{(\u,\varphi,\U) :(\u,\varphi)\text{ is \text{a unique strong solution} of }\eqref{nonlin phi}\text{-}\eqref{initial conditions}  \text{ with control }\U\Big\}.
	\end{align*}
\end{definition}
Clearly $\mathscr{U}_{\text{ad}}$ is nonempty and note that from the existence and uniqueness theorem (see Theorems \ref{exist}, \ref{unique}, \ref{strongsol}), we know that for any $\U\in\mathscr{U}_{\text{ad}}$, there exists a unique strong solution for the system \eqref{nonlin phi}-\eqref{initial conditions}. Hence, $\mathscr{A}_{\text{ad}}$ is also a nonempty set. In view of the above definition, the optimal control problem we are considering can be  formulated as:
\begin{equation}\label{control problem}\tag{OCP}
\min_{ (\u,\varphi,\U) \in \mathscr{A}_{\text{ad}}}  \mathcal{J}(\u,\varphi,\U).
\end{equation}
\begin{definition}[Optimal Solution]
	A solution to the Problem \eqref{control problem} is called an \emph{optimal solution} and the optimal triplet is denoted by $(\u^* ,\varphi^*, \U^*)$. The control $\U^*$ is called an \emph{optimal control}.
\end{definition}
In the rest of this section, we find an optimal solution to the problem (\ref{control problem}), and the optimal control is characterized  via adjoint variable. 

\subsection{\bf The Adjoint System} As is well known from the control theory literature, in order to get the necessary conditions for the existence of an optimal control to the Problem \eqref{control problem}, we need the adjoint equations corresponding to the system \eqref{nonlin phi}-\eqref{initial conditions}.  In this subsection, we formally derive the adjoint system corresponding to the problem \eqref{nonlin phi}-\eqref{initial conditions}. Let us take $\h=\mathbf{0}$ in \eqref{nonlin phi}-\eqref{nonlin u}  and define
\begin{equation}\label{n1n2}
\left\{
\begin{aligned} 
\mathscr{N}_1(\u,\varphi, \U) &:= \nu \Delta \u - (\u \cdot \nabla)\u - \nabla \widetilde{\uppi } -(\J\ast \varphi) \nabla \varphi -\nabla a\frac{\varphi^2}{2} +\U,  \\
\mathscr{N}_2(\u,\varphi) &:= -\u\cdot \nabla \varphi + \Delta (a\varphi -\J\ast \varphi + \F'(\varphi)),
\end{aligned}
\right.
\end{equation}
where $\widetilde{\uppi} = \uppi -\left( \F(\varphi) + a\frac{\varphi^2}{2}\right)$.
Then the system \eqref{nonlin phi}-\eqref{nonlin u} can be written as
$$(\partial_t \u,\partial_t \varphi) = (\mathscr{N}_1(\u,\varphi, \U), \mathscr{N}_2(\u,\varphi)).$$
We define the \emph{augmented cost functional} $\widetilde{\mathcal{J}}$ by
\begin{align}\widetilde{\mathcal{J}}(\u,\varphi,\U,\p,\eta) &:=  \int_0^T \langle \p,\partial_t \u - \mathscr{N}_1(\u,\varphi, \U) \rangle \d t+ \int_0^T \langle \eta, \partial_t \varphi - \mathscr{N}_2(\u,\varphi) \rangle \d t \no\\&\qquad- \mathcal{J}(\u,\varphi,\U) +\int_0^T \nabla q\cdot\u \ \d t , \end{align}
where $\p$ and $\eta$ denote the adjoint variables corresponding to $\u$ and $\varphi$ respectively. Corresponding to $\uppi$ in the system \eqref{nonlin phi}, we have $q$ in the adjoint system. 

Before establishing the Pontryagin maximum principle, we derive the adjoint equations formally by differentiating the augmented cost functional $\widetilde{\mathcal{J}}$ in the G\^ateaux  sense with respect to each variable. The adjoint variables $ \p, \eta $ and $\U$ satisfy the following system 
\begin{equation}\label{e2}
\left\{
\begin{aligned}
-	\p_t  - [\partial_\u \mathscr{N}_1]^*\p - [\partial_\u \mathscr{N}_2]^* \eta + \nabla q &= \mathcal{J}_\u,  \\ 
-	\eta_t - [\partial_{\varphi} \mathscr{N}_1]^*\p - [\partial_{\varphi} \mathscr{N}_2]^* \eta &= \mathcal{J}_{\varphi},  \\
- [\partial_\U \mathscr{N}_1]^*\p - [\partial_\U  \mathscr{N}_2]^* \eta &= \mathcal{J}_\U, \\ \text{div }\p&=0,\\ \p\big|_{\partial\Omega}=\frac{\partial\eta}{\partial\mathbf{n}}\Big|_{\partial \Omega}&=\mathbf{0},\\ \p(T,\cdot)=\eta(T,\cdot)&=0.
\end{aligned}
\right.
\end{equation}

Note that differentiating $\widetilde{\mathcal{J}}$ with respect to the adjoint variables recovers the original nonlinear system.
We compute $[\partial_\u \mathscr{N}_1]^*\p$, $[\partial_\u \mathscr{N}_2]^* \eta$, $[\partial_{\varphi} \mathscr{N}_1]^*\p$, $[\partial_{\varphi}  \mathscr{N}_2]^* \eta$ as
\begin{equation}
\left\{
\begin{aligned}
[\partial_\u \mathscr{N}_1]^* \p &= \nu \Delta \p +(\p \cdot \nabla^T)\u - (\u \cdot \nabla)\p,\\ [\partial_\u \mathscr{N}_2]^* \eta &= \eta\nabla \varphi,\\
[\partial_\varphi \mathscr{N}_1]^* \p &=  - \J \ast (\p \cdot \nabla \varphi) + (\nabla{\J} \ast \varphi)\cdot \p  - \nabla a\cdot \p \varphi ,\\ [\partial_\varphi \mathscr{N}_2]^* \eta &= \u \nabla \eta + a \Delta \eta - \J\ast \Delta \eta + \F''(\varphi)\Delta \eta.
\end{aligned}
\right.
\end{equation}

Also it should be noted  that the third condition in (\ref{e2}) gives $\U=-\p$ if we take $\U\in\mathscr{U}_{\text{ad}}=\mathrm{L}^2(0,T;\G_{\text{div}})$. Thus from \eqref{e2}, it follows that the adjoint variables $(\p,\eta)$ satisfy the following adjoint system: 
\begin{equation}\label{adj}
\left\{
\begin{aligned}
-\p_t- \nu \Delta \p -(\p \cdot \nabla^T)\u + (\u \cdot \nabla)\p - \eta \nabla \varphi+\nabla q &= (\u-\u_d),\\
-\eta_t   +\J \ast (\p \cdot \nabla \varphi) - (\nabla{\J} \ast  \varphi)\cdot \p+ \nabla a\cdot \p \varphi 
- \u \cdot\nabla \eta - a \Delta \eta&\\ + \J\ast \Delta \eta - \F''(\varphi)\Delta \eta &= (\varphi - \varphi_d), \\ \text{div }\p&=0,\\ \p\big|_{\partial\Omega}={\frac{\partial{\eta}}{\partial \mathbf{n}}}\Big|_{\partial \Omega}&=\mathbf{0},\\ \p(T,\cdot)=\eta(T,\cdot)&=0.
\end{aligned}
\right.
\end{equation}
The following theorem gives the unique solvability of the system (\ref{adj}) with $\p(T)=\p_T\in\G_{\text{div}}$ and $\eta(T)=\eta_T\in\mathrm{V}$. 
\begin{theorem}[Existence and Uniqueness of Adjoint System]\label{adjoint}
	Let the Assumption \ref{prop of F and J}, \eqref{fes}, \eqref{initial} along  with $\J \in \W^{2,1}(\R^2,\R)$ be satisfied. Also let us assume that $\p_T \in \G_{\text{div}}$ and $\eta_T \in \mathrm{V}$ and $(\u,\varphi)$ be a unique strong solution of the nonlinear system \eqref{nonlin phi}-\eqref{initial conditions}. Then, there exists \emph{a unique weak solution} of the system \eqref{adj} satisfying
	\begin{align}\label{space}
	(\p,\eta)\in(\mathrm{L}^{\infty}(0,T;\G_{\text{div}})\cap\mathrm{L}^2(0,T;\V_{\text{div}}))\times (\mathrm{L}^{\infty}(0,T;\mathrm{V})\cap \mathrm{L}^2(0,T;\mathrm{H}^2)),\end{align} 
	and for all $\v \in \V$ and $\zeta \in \mathrm{H}$ and for almost all $t \in (0,T)$, we have
	\begin{equation}\label{0.1}
	\left\{
	\begin{aligned}
	- \langle \p_t, \v \rangle + \nu(\nabla \p,\nabla \v) -((\p \cdot \nabla^T)\u,\v) + ((\u \cdot \nabla)\p,\v) - (\eta \nabla \varphi,\v) &= (\u-\u_d,\v), \\ 
	-(\eta_t,\zeta) + (\J \ast (\p \cdot \nabla \varphi),\zeta) - ((\nabla{\J} \ast  \varphi)\cdot \p,\zeta) + (\nabla a\cdot \p \varphi 
	,\zeta)\qquad \ \, \, \\  - (\u \cdot\nabla \eta,\zeta) - (a \Delta \eta,\zeta)  +(\J\ast \Delta \eta,\zeta) - (\F''(\varphi)\Delta \eta,\zeta) &= (\varphi - \varphi_d,\zeta),  
	\end{aligned}	
	\right.
	\end{equation}
	where $\p(T)=\p_T\in\G_{\text{div}}$,  $\eta(T)=\eta_T\in\mathrm{V}$ are satisfied in the weak sense. 
\end{theorem}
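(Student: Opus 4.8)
The plan is to prove existence via a Faedo--Galerkin approximation scheme run backward in time, followed by uniform a priori estimates and a passage to the limit, and then to prove uniqueness by an energy argument on the difference of two solutions. Since the adjoint system \eqref{adj} is linear in $(\p,\eta)$ and it is posed backward in time with terminal data $\p(T)=\p_T$, $\eta(T)=\eta_T$, I would first perform the time reversal $t\mapsto T-t$ to recast it as a forward-in-time linear Cauchy problem with initial data $\p_T\in\G_{\text{div}}$, $\eta_T\in\mathrm{V}$; all the coefficients, namely $\u,\nabla\u,\varphi,\nabla\varphi,\F''(\varphi)$, become known functions with the regularity granted by Theorem \ref{strongsol} and Remark \ref{rem2.12}, in particular $\u\in\mathrm{L}^\infty(0,T;\V_{\text{div}})\cap\mathrm{L}^2(0,T;\H^2(\Omega))$, $\varphi\in\mathrm{L}^\infty((0,T)\times\Omega)\cap\mathrm{L}^\infty(0,T;\mathrm{H}^2(\Omega))$, and (using $\J\in\W^{2,1}$) $\nabla\varphi\in\mathrm{L}^\infty(0,T;\mathbb{L}^p)$ for all finite $p$. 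I would build Galerkin approximants $\p^m=\sum c_i(t)\me_i$ in the eigenbasis $\{\me_j\}$ of the Stokes operator $\A$ and $\eta^m=\sum d_i(t)w_i$ in the eigenbasis of the Neumann operator $\mathcal{B}$, obtaining a linear ODE system with $\mathrm{L}^1_t$ coefficients, hence locally solvable; the a priori bounds below make the solution global.

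The key a priori estimate is obtained by testing the (time-reversed) $\p$-equation with $\p^m$ and the $\eta$-equation with $\mathcal{B}\eta^m$ (equivalently, $-\Delta\eta^m$ with the Neumann condition), so that the top-order dissipative term $-(a-\J\ast\,\cdot\,+\F''(\varphi))\Delta\eta$ produces, after integration by parts, a term controlled below by $C_0\|\nabla\eta^m\|^2$ using Assumption \ref{prop of F and J}(2), exactly as in the weak-strong uniqueness estimate of Theorem \ref{weakstrong}. The transport term $(\u\cdot\nabla)\p$ is antisymmetric and drops; the stretching term $(\p\cdot\nabla^T)\u$ is estimated by $\|\nabla\u\|_{\mathbb{L}^2}\|\p\|_{\mathbb{L}^4}^2\le C\|\nabla\u\|\,\|\p\|\,\|\nabla\p\|$ via Ladyzhenskaya \eqref{lady}, absorbed by Young's inequality into $\tfrac{\nu}{2}\|\nabla\p\|^2$ at the cost of a factor $\|\nabla\u\|^2\in\mathrm{L}^1(0,T)$; the coupling terms $\eta\nabla\varphi$, $\J\ast(\p\cdot\nabla\varphi)$, $(\nabla\J\ast\varphi)\cdot\p$, $\nabla a\cdot\p\,\varphi$ and $\u\cdot\nabla\eta$ are all bounded using $\|\nabla\varphi\|_{\mathbb{L}^\infty_t\mathbb{L}^4}$, $\|\varphi\|_{\mathrm{L}^\infty}$, Young's convolution inequality with $\J,\nabla\J\in\mathrm{L}^1$, and interpolation, each time splitting off an $\mathrm{L}^1_t$ coefficient times $\|\p\|^2+\|\eta\|^2$ plus a fraction of the dissipation. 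Gr\"onwall's inequality then yields $\p^m$ bounded in $\mathrm{L}^\infty(0,T;\G_{\text{div}})\cap\mathrm{L}^2(0,T;\V_{\text{div}})$ and $\eta^m$ bounded in $\mathrm{L}^\infty(0,T;\mathrm{V})\cap\mathrm{L}^2(0,T;\mathrm{H}^2(\Omega))$, which is exactly \eqref{space}; bounds on $\p^m_t$ in $\mathrm{L}^2(0,T;\V_{\text{div}}')$ and $\eta^m_t$ in $\mathrm{L}^2(0,T;\mathrm{H})$ (say) follow from the equations, and the Aubin--Lions lemma gives strongly convergent subsequences allowing passage to the limit in the (linear) weak formulation \eqref{0.1}; reversing time back recovers the terminal conditions.

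For uniqueness, the difference $(\p,\eta)$ of two solutions solves the homogeneous system with zero terminal data; the very same test-function computation (now with no forcing) gives $\tfrac{d}{dt}(\|\p\|^2+\|\nabla\eta\|^2+\ldots)\le g(t)(\|\p\|^2+\|\eta\|^2_{\mathrm{V}})$ with $g\in\mathrm{L}^1(0,T)$, and Gr\"onwall forces $\p\equiv0$, $\eta\equiv0$. The main obstacle I anticipate is controlling the highest-order term in the $\eta$-equation: one must test with $-\Delta\eta$ rather than $\eta$ to see the good sign coming from $\F''(\varphi)+a\ge C_0$, and then carefully bound $(\nabla(\J\ast\Delta\eta),\nabla\eta)$ and the commutator-type term $\nabla(\F''(\varphi)\Delta\eta)$ — the latter requires $\F\in\C^3$ and $\varphi\in\mathrm{L}^\infty_t\mathrm{H}^2$ (hence $\nabla\varphi\in\mathrm{L}^\infty_t\mathbb{L}^p$, and here $\J\in\W^{2,1}$ is used to get $a\in\mathrm{H}^2$-type control and $\nabla\varphi$ regularity), so that all terms reduce to an $\mathrm{L}^1_t$ coefficient times $\|\nabla\eta\|^2$ plus a fraction of $\|\Delta\eta\|^2$. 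Since the system is linear this is the only genuinely delicate point; everything else is a bookkeeping exercise analogous to the estimates already carried out in \cite{BDM2,ControlCHNS} and in Theorems \ref{weakstrong}--\ref{thm2.16}.
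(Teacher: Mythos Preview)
Your approach is correct and coincides with the paper's: the paper's proof is literally the single sentence ``Using Galerkin approximation technique, we can proof as in Theorem 3.5 of \cite{BDM2}'', and your outline (time reversal, Galerkin in the Stokes and Neumann eigenbases, energy estimates testing with $\p$ and $-\Delta\eta$, Gr\"onwall, Aubin--Lions, linear uniqueness) is exactly the content of that reference.

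One small slip to fix: when you test the $\eta$-equation with $-\Delta\eta$, the term $-(a+\F''(\varphi))\Delta\eta$ paired with $-\Delta\eta$ gives directly $\int_\Omega (a+\F''(\varphi))|\Delta\eta|^2\ge C_0\|\Delta\eta\|^2$, not $C_0\|\nabla\eta\|^2$; no integration by parts is needed there, and in particular no ``commutator'' $\nabla(\F''(\varphi)\Delta\eta)$ appears. The genuine integration by parts is the one you mention for the convolution term, $(\J\ast\Delta\eta,\Delta\eta)=-((\nabla\J)\ast\Delta\eta,\nabla\eta)$ (using $\partial_{\mathbf{n}}\eta=0$), which is then absorbed via Young's inequality since $\nabla\J\in\mathrm{L}^1$.
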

\begin{proof}
Using Galerkin approximation technique, we can proof as in Theorem 3.5 of x\cite{BDM2}.
\end{proof}

\subsection{Existence of an Optimal Control}

Let us now show that an optimal triplet $(\u^*,\varphi^*,\U^*)$ exists for the problem \eqref{control problem}.
\begin{theorem}[Existence of an Optimal Triplet]\label{optimal}
	Let the Assumption \ref{prop of F and J} along with the condition (\ref{fes}) holds true and the initial data $(\u_0,\varphi_0)$ satisfying (\ref{initial}) be given. If $\mathrm{J}\in\mathrm{W}^{2,1}(\mathbb{R}^2;\mathbb{R})$, then there exists at least one triplet  $(\u^*,\varphi^*,\U^*)\in\mathscr{A}_{\text{ad}}$  such that the functional $ \mathcal{J}(\u,\varphi,\U)$ attains its minimum at $(\u^*,\varphi^*,\U^*)$, where $(\u^*,\varphi^*)$ is the unique strong solution of \eqref{nonlin phi}-\eqref{initial conditions}  with the control $\U^*$.
\end{theorem}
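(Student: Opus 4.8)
The plan is to use the direct method of the calculus of variations. First I would observe that the cost functional $\mathcal{J}$ is nonnegative, so the infimum
\[
m := \inf_{(\u,\varphi,\U)\in\mathscr{A}_{\text{ad}}} \mathcal{J}(\u,\varphi,\U)
\]
is finite and nonnegative; since $\mathscr{A}_{\text{ad}}$ is nonempty (e.g.\ $\U=\mathbf{0}$ with the corresponding unique strong solution), we may pick a minimizing sequence $(\u^n,\varphi^n,\U^n)\in\mathscr{A}_{\text{ad}}$ with $\mathcal{J}(\u^n,\varphi^n,\U^n)\to m$. From the definition of $\mathcal{J}$, the term $\frac12\int_0^T\|\U^n(t)\|^2\d t$ is bounded, so $(\U^n)$ is bounded in $\mathrm{L}^2(0,T;\G_{\text{div}})$, and along a subsequence (not relabelled) $\U^n\rightharpoonup\U^*$ weakly in $\mathrm{L}^2(0,T;\G_{\text{div}})$.

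Next I would derive uniform a priori bounds on the associated states $(\u^n,\varphi^n)$. Because each $(\u^n,\varphi^n)$ is the unique strong solution of \eqref{nonlin phi}--\eqref{initial conditions} with control $\U^n$ and fixed initial data satisfying \eqref{initial}, Theorem~\ref{thm2.16} (applied with, say, $\U_1=\U^n$ and $\U_2=\mathbf{0}$) together with the boundedness of $(\U^n)$ in $\mathrm{L}^2(0,T;\G_{\text{div}})$ gives uniform bounds for $\u^n$ in $\C([0,T];\G_{\text{div}})\cap\mathrm{L}^2(0,T;\V_{\text{div}})$ and for $\varphi^n$ in $\C([0,T];\mathrm{V})\cap\mathrm{L}^2(0,T;\mathrm{H}^2)\cap\mathrm{H}^1(0,T;\mathrm{H})$; one also obtains a bound for $\u^n_t$ in $\mathrm{L}^2(0,T;\G_{\text{div}})$ from the strong-solution regularity \eqref{0.22} and the equation itself. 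By Banach--Alaoglu and the Aubin--Lions--Simon compactness lemma I can then extract a further subsequence such that
\[
\u^n \rightharpoonup \u^* \ \text{weak}^*\ \text{in}\ \mathrm{L}^{\infty}(0,T;\V_{\text{div}}),\quad \u^n\to\u^*\ \text{in}\ \mathrm{L}^2(0,T;\V_{\text{div}}),
\]
\[
\varphi^n\rightharpoonup\varphi^*\ \text{weak}^*\ \text{in}\ \mathrm{L}^{\infty}(0,T;\mathrm{V}),\quad \varphi^n\to\varphi^*\ \text{in}\ \mathrm{L}^2(0,T;\mathrm{V}),
\]
with $\u^n_t\rightharpoonup\u^*_t$, $\varphi^n_t\rightharpoonup\varphi^*_t$ in the appropriate $\mathrm{L}^2$ spaces, and (using the $\mathrm{L}^\infty((0,T)\times\Omega)$ bound on $\varphi^n$) $\varphi^n\to\varphi^*$ also strongly in, e.g., $\mathrm{L}^2(0,T;\mathrm{L}^4(\Omega))$.

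The main obstacle is passing to the limit in the nonlinear terms of \eqref{nonlin phi}--\eqref{nonlin u}, namely $(\u^n\cdot\nabla)\u^n$, $\u^n\cdot\nabla\varphi^n$, the capillarity/Korteweg terms $(\J\ast\varphi^n)\nabla\varphi^n$ and $\nabla a\,(\varphi^n)^2/2$, and the potential term $\F'(\varphi^n)$. For the convective term I would use the strong $\mathrm{L}^2(0,T;\V_{\text{div}})$ convergence of $\u^n$ against the weak convergence to pass to the limit in $b(\u^n,\u^n,\v)$ for test functions $\v\in\V_{\text{div}}$; for $\u^n\cdot\nabla\varphi^n$ I pair the strong convergence of $\u^n$ in $\mathrm{L}^2(0,T;\mathbb{L}^4)$ with the weak convergence of $\nabla\varphi^n$ in $\mathrm{L}^2(0,T;\mathbb{L}^2)$ (or integrate by parts onto the test function). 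The convolution terms are handled by continuity of $\varphi\mapsto\J\ast\varphi$ together with strong $\mathrm{L}^2(0,T;\mathrm{L}^4)$ convergence of $\varphi^n$; for $\F'(\varphi^n)$ and $(\varphi^n)^2$ I use a.e.\ convergence (from strong $\mathrm{L}^2$ convergence along a subsequence) plus the polynomial growth \eqref{2.9}, Remark~\ref{remark F}, and the uniform $\mathrm{L}^\infty$ bound on $\varphi^n$ to invoke a dominated-convergence / Vitali argument, so $\F'(\varphi^n)\to\F'(\varphi^*)$ in $\mathrm{L}^2(0,T;\mathrm{H})$. This shows $(\u^*,\varphi^*)$ is a weak solution of \eqref{nonlin phi}--\eqref{initial conditions} with control $\U^*$; by the uniqueness theorems (Theorems~\ref{unique}, \ref{strongsol}, and weak--strong uniqueness Theorem~\ref{weakstrong}) it is the unique strong solution, so $(\u^*,\varphi^*,\U^*)\in\mathscr{A}_{\text{ad}}$. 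Finally, lower semicontinuity of the convex $\mathrm{L}^2$ norms under weak convergence gives
\[
\mathcal{J}(\u^*,\varphi^*,\U^*) \le \liminf_{n\to\infty}\mathcal{J}(\u^n,\varphi^n,\U^n) = m,
\]
and since $(\u^*,\varphi^*,\U^*)\in\mathscr{A}_{\text{ad}}$ forces $\mathcal{J}(\u^*,\varphi^*,\U^*)\ge m$, equality holds and the minimum is attained at $(\u^*,\varphi^*,\U^*)$.
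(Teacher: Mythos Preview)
Your argument is correct and follows essentially the same direct-method route as the paper: pick a minimizing sequence, extract weak and strong subsequential limits via Banach--Alaoglu and Aubin--Lions, pass to the limit in the weak formulation, upgrade to the unique strong solution, and conclude by weak lower semicontinuity of the cost. The only minor differences are that the paper obtains the uniform state bounds directly from the energy estimates (Remark~\ref{rem2.5} and the proof of Theorem~\ref{strongsol}) rather than through Theorem~\ref{thm2.16}, and the paper defers the nonlinear limit passage to citations whereas you spell it out; neither changes the substance of the proof.
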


\begin{proof}
\iffalse
	\textbf{Claim (1):} \emph{$\mathscr{A}_{\text{ad}}$ is nonempty.}
	If $\U=\textbf{0}$, then by the existence and uniqueness theorem (see Theorems \ref{exist} and \ref{unique}), a  unique weak solution $(\u,\varphi)$ exists. Since the condition (\ref{fes}) holds true and the initial data $(\u_0,\varphi_0)$ satisfies (\ref{initial}), the unique weak solution we obtained is also a strong solution. Hence, $\mathcal{J}(\u,\varphi,\textbf{0})$ exists and belongs to $\mathscr{A}_{\text{ad}}$. Therefore the set $\mathscr{A}_{\text{ad}}$ is nonempty. 
	\fi
	\noindent\textbf{Claim (1):} \emph{There exists  an optimal triplet $(\u^*,\varphi^*,\U^*)\in\mathscr{A}_{\text{ad}}$.}	
	Let us define $$\mathscr{J} := \inf \limits _{\U \in \mathscr{U}_{\text{ad}}}\mathcal{J}(\u,\varphi,\U).$$
	Since, $0\leq \mathscr{J} < +\infty$, there exists a minimizing sequence $\{\U_n\} \in \mathscr{U}_{\text{ad}}$ such that $$\lim_{n\to\infty}\mathcal{J}(\u_n,\varphi_n,\U_n) = \mathscr{J},$$ where $(\u_n,\varphi_n)$ is the unique strong solution of \eqref{nonlin phi}-\eqref{initial conditions} with the control $\U_n$ and 
	\begin{align}\label{initial1}
	\u_n(0)=	\u_0 \in\V_{\text{div}}\ \text{ and }\ \varphi_n(0)=\varphi_0 \in \mathrm{H}^2(\Omega).
	\end{align}
	Without loss of generality, we assume that $\mathcal{J}(\u_n,\varphi_n,\U_n) \leq \mathcal{J}(\u,\varphi,\mathbf{0})$, where $(\u,\varphi,\mathbf{0})\in\mathscr{A}_{\text{ad}}$. From the definition of $\mathcal{J}(\cdot,\cdot,\cdot)$, this implies
	\begin{align}\label{bound}
	&\frac{1}{2} \int_0^T \|\u_n(t)-\u_d(t)\|^2 \d t+ \frac{1}{2} \int_0^T \|\varphi_n(t) -\varphi_d(t)\|^2 \d t+ \frac{1}{2} \int_0^T\|\U_n(t)\|^2\d t\nonumber\\& \leq \frac{1}{2} \int_0^T \|\u(t)-\u_d(t)\|^2\d t + \frac{1}{2} \int_0^T \|\varphi(t) -\varphi_d(t)\|^2\d t . \end{align}
	Since $\u,\u_d \in \mathrm{L}^{2}(0,T;\G_{\text{div}})$ and $\varphi,\varphi_d \in \mathrm{L}^{2}(0,T;\mathrm{H})$, from the above relation, it is clear that, there exist a $K>0$, large enough such that
	$$0 \leq \mathcal{J}(\u_n,\varphi_n,\U_n) \leq K < +\infty.$$
	In particular, there exists a large $C>0,$ such that
	$$ \int_0^T \|\U_n(t)\|^2 \d t \leq  C < +\infty .$$
	Therefore the sequence $\{\U_n\}$ is uniformly bounded in the space $\mathrm{L}^2(0,T;\G_{\text{div}})$. Since $(\u_n,\varphi_n)$ is a unique weak solution of the system \eqref{nonlin phi}-\eqref{initial conditions} with control $\U_n$, from the energy estimates, one can easily show that the sequence $\{\u_n\} $ is uniformly bounded in $\mathrm{L}^{\infty}(0,T;\G_{\text{div}})\cap \mathrm{L}^2(0,T;\V_{\text{div}})$ and $\{\varphi_n\} $ is uniformly bounded in $\mathrm{L}^{\infty}(0,T;\mathrm{H})\cap\mathrm{L}^2(0,T;\mathrm{V})$. 
	Hence, by using the Banach-Alaglou theorem, we can extract a subsequence $\{(\u_{n},\varphi_{n}, \U_n)\}$ such that 
	\begin{equation}\label{conv}
	\left\{
	\begin{aligned} 
	\u_n&\xrightharpoonup{w^*}\u^*\text{ in } \mathrm{L}^{\infty}(0,T;\G_{\text{div}}), \\ \u_n &\rightharpoonup \u^*\text{ in  }\textrm{L}^2(0,T;\V_{\text{div}}),\\ 
	(\u_n)_t &\rightharpoonup \u^*_t\text{ in  }\textrm{L}^2(0,T;\V_{\text{div}}'),\\\varphi_n &\xrightharpoonup{w^*} \varphi^*\text{ in }\mathrm{ L}^{\infty}(0,T;\mathrm{H}),\\ 
	(\varphi_{n}) _t&\rightharpoonup \varphi^*_t\text{ in }\mathrm{ L}^2(0,T;\mathrm{V}'),\\ \U_n &\rightharpoonup  \U^*\text{ in  }\mathrm{L}^2(0,T;\G_{\text{div}}).
	\end{aligned}
	\right.
	\end{equation}
	A calculation similar to the proof of Theorem \ref{exist} , (see Theorem 2, \cite{weak}) and Theorem \ref{unique} (see Theorem 2, \cite{unique}) and using Aubin-Lion's compactness theorem  and the convergence in \eqref{conv}, we get  
	\begin{equation}
	\left\{
	\begin{aligned}
	\u_n&\to \u^*\text{ in }\mathrm{L}^{2}(0,T;\G_{\text{div}}), \text{ a. e. }\text{ in }\Omega\times(0,T),\\
	\varphi_n &\to \varphi^*\text{ in }\mathrm{ L}^{2}(0,T;\mathrm{H}), \text{ a. e. }\text{ in }\Omega\times(0,T).
	\end{aligned}
	\right. 
	\end{equation}
	Proceeding similarly as in Theorem 1, \cite{weak} and Theorem 2, \cite{unique}, we obtain  $(\u^*,\varphi^*)$ is a unique weak solution of \eqref{nonlin phi}-\eqref{initial conditions} with control $\U^*$. Also, $(\u^*,\varphi^*) \in \mathrm{C}([0,T];\G_{\text{div}}) \times \mathrm{C}([0,T];\mathrm{H})$. Note that the initial condition (\ref{initial1}) and (\ref{fes}) gives the following convergences:
	\begin{equation*}
	\u_n\in \mathrm{L}^{\infty}(0,T;\V_{\text{div}})\cap \mathrm{L}^2(0,T;\H^2(\Omega)), \ \ \varphi_n\in \mathrm{L}^{\infty}((0,T) \times\Omega)\times \mathrm{L}^{\infty}(0,T;\mathrm{V})).
	\end{equation*}
	and 
	\begin{equation*}\varphi_n \in\mathrm{L}^{\infty}(0,T;\mathrm{W}^{1,p}),\ 2\leq p<\infty.
	\end{equation*}
	Thus, we have (see Theorem 2, \cite{strong} also)
	\begin{equation}
	\left\{
	\begin{aligned}
	\u_n&\xrightharpoonup{w^*}\u^*\ \text{ in }\ \mathrm{L}^{\infty}(0,T;\V_{\text{div}}), \\
	\u_n&\rightharpoonup\u^*\ \text{ in }\ \mathrm{L}^2(0,T;\H^2(\Omega)),\\
	(\u_n)_t&\rightharpoonup\u^*_t\ \text{ in }\ \mathrm{L}^{2}(0,T;\G_{\text{div}}), \\
	\varphi_n&\to\varphi^*\ \text{ a.e., }(x,t) \ \in\Omega\times(0,T), \\
	(\varphi_{n})_t&\rightharpoonup\varphi^*_t\ \text{ in }\ \mathrm{L}^{2}(0,T;\mathrm{H})
	\end{aligned}
	\right.
	\end{equation}
	Thus the above convergences and Remark \ref{rem2.12} (see \eqref{ues}) imply that $(\u^*,\varphi^*)$ has the regularity given in (\ref{0.22}) and (\ref{0.23}). Since $\u^*\in\mathrm{C}([0,T];\V_{\text{div}})$ and $\mathrm{J}\in\mathrm{W}^{2,1}(\mathbb{R}^2;\mathbb{R}),$ we know that $\varphi^*\in\mathrm{C}([0,T];\mathrm{H}^2)$ and hence we have 
	\begin{align}\label{initial2}
	\u^*(0)=	\u_0\in\V_{\text{div}}\ \text{ and }\ \varphi^*(0)=\varphi_0\in \mathrm{H}^2(\Omega).
	\end{align}
	Hence $(\u^*,\varphi^*)$ is a unique strong solution of  \eqref{nonlin phi}-\eqref{initial conditions} with control $\U^*\in\mathrm{L}^2(0,T;\G_{\text{div}})$. Remember that $\mathscr{U}_{\text{ad}}$ is the space consisting of controls $\U \in \mathrm{L}^2(0,T;\G_{\text{div}})$ and hence $\mathscr{U}_{\text{ad}}\ni\U_n \rightharpoonup  \U^*\text{ in  }\mathrm{L}^2(0,T;\G_{\text{div}})$ implies that $\U^*\in\mathscr{U}_{\text{ad}}$. This easily gives  $(\u^*,\varphi^*,\U^*)\in \mathscr{A}_{\text{ad}}$. 
	
	\noindent \textbf{Claim (2):} \emph{$\mathscr{J}=\mathcal{J}(\u^*,\varphi^*,\U^*)$}. Since the cost functional $\mathcal{J}(\cdot,\cdot,\cdot)$ is continuous and convex on $\mathrm{L}^2(0,T;\G_{\text{div}}) \times \mathrm{L}^2(0,T;\mathrm{H}) \times \mathscr{U}_{\text{ad}}$, it follows that $\mathcal{J}(\cdot,\cdot,\cdot)$ is weakly lower semi-continuous (see Proposition 1, Chapter 5, \cite{AbEk}). That is, for a sequence $\U_n \rightarrow\U\in\mathscr{U}_{\text{ad}}$, 
	$$(\u_n,\varphi_n,\U_n)\xrightharpoonup{w}(\u^*,\varphi^*,\U^*)\text{ in }\mathrm{L}^2(0,T;\V_{\text{div}})\times \mathrm{L}^2(0,T;\mathrm{V})\times \mathrm{L}^2(0,T;\G_{\text{div}}),$$
	we have 
	\begin{align*}
	\mathcal{J}(\u^*,\varphi^*,\U^*) \leq  \liminf \limits _{n\rightarrow \infty} \mathcal{J}(\u_n,\varphi_n,\U_n).
	\end{align*}
	Therefore, we get
	\begin{align*}\mathscr{J} \leq \mathcal{J}(\u^*,\varphi^*,\U^*) \leq  \liminf \limits _{n\rightarrow \infty} \mathcal{J}(\u_n,\varphi_n,\U_n)=  \lim \limits _{n\rightarrow \infty} \mathcal{J}(\u_n,\varphi_n,\U_n) = \mathscr{J},\end{align*}
	and hence $(\u^*,\varphi^*,\U^*)$ is a minimizer.
\end{proof}
\subsection{\bf Pontryagin Maximum Principle}
In this subsection, we prove the Pontryagin maximum principle  for the  optimal control problem defined in \eqref{control problem}. Pontryagin Maximum principle  gives a first order necessary condition for the optimal control problem  \eqref{control problem}. We also characterize the optimal control in terms of the adjoint variables. Even though we announced the subsection title as Pontryagin maximum principle, our problem is a minimization of the cost functional given in (\ref{cost}) and hence we obtain  a  minimum principle. 

The following minimum principle is satisfied by the optimal triplet \\$(\u^*,\varphi^*,\U^*)\in\mathscr{A}_{\text{ad}}$:
\begin{equation}\label{pm}
\frac{1}{2}\|\U^*(t)\|^2+(\p(t),\U^*(t) ) \leq \frac{1}{2}\|\mathrm{W}\|^2+( \p(t),\mathrm{W} ),
\end{equation} 
for all  $\mathrm{W}\in \G_{\text{div}},$ and a.e. $t\in[0,T]$. 
Equivalently the above minimum principle may be  written in terms of the \emph{Hamiltonian formulation}. 
Let us first define the \emph{Lagrangian} by
$$\mathscr{L}(\u,\varphi,\U) = \frac{1}{2} (\|\u-\u_d\|^2 + \|\varphi - \varphi_d\|^2+ \|\U\|^2).$$
Then, we can define the corresponding \emph{Hamiltonian} by
$$\mathscr{H}(\u,\varphi,\U,\p,\eta)= \mathscr{L}(\u,\varphi,\U) + (\p, \mathscr{N}_1(\u,\varphi,\U) ) + (\eta, \mathscr{N}_2(\u,\varphi)),$$ where $\mathscr{N}_1$ and $\mathscr{N}_2$ are defined by (\ref{n1n2}).
Hence, we get the minimum principle as
\begin{equation}
\mathscr{H}(\u^*(t),\varphi^*(t),\U^*(t),\p(t),\eta(t)) \leq \mathscr{H}(\u^*(t),\varphi^*(t),\mathrm{W},\p(t),\eta(t)),
\end{equation}
for all $\mathrm{W} \in \G_{\text{div}}$ and a.e. $t\in[0,T]$. 
\begin{definition}[Subgradient, Subdifferential] Let $\X$ be a real Banach space and $f:\X\to(-\infty,\infty]$  a functional on $\X$. A linear functional $u'\in\X'$  is called \emph{subgradient} of $f$ at $u$ if $f(u)\neq +\infty$ and
	$$f(v)\geq f(u)+\langle u',v-u\rangle_{\X'\times \X},$$ holds for all $v \in\X$. The set of all sub-gradients of $f$ at u is called \emph{subdifferential} $\partial f(u)$ of $f$ at $u$.
	\end{definition}
We say that $f$ is \emph{G\^ateaux differentiable} at $u$ in $\X$ if $\partial f(u)$ consists of exactly one element, which we denote by $f_u(u)$.   This is equivalent to the assertion that the limit $$\langle f_u(u),h\rangle_{\X'\times\X}=\lim_{\tau\to 0}\frac{f(u+\tau h)-f(u)}{\tau}=\frac{\d}{\d\tau}f(u+\tau h)\bigg|_{\tau=0},$$ exists for all $h\in\X$. 

From \eqref{pm}, we see that $-\p \in \partial \frac{1}{2}\|{\U^*}(t)\|^2$, where $\partial$ denotes the subdifferential. Since, $\frac{1}{2}\|\cdot\|^2$ is G\^ateaux differentiable, the subdifferential consists of a single point and it follows that
\begin{equation}\label{nec}
-\p(t) = \U^*(t), \ \text{ a.e. }\  t \in [0,T].
\end{equation}

Now we state the main result of our paper. For similar results regrading the incompressible Navier-Stokes equations, see for example \cite{sritharan,FART} and for the linearized compressible Navier-Stokes equations, see \cite{SDMTS}. 
\begin{theorem}[Pontryagin Minimum Principle]\label{main}
	Let $(\u^*,\varphi^*,\U^*)\in\mathscr{A}_{\text{ad}}$ be the optimal solution of the Problem \ref{control problem} obtained in Theorem \ref{optimal}. Then there exists \emph{a unique weak solution} $(\p,\eta)$ of the adjoint system \eqref{adj} such that

\begin{equation}\label{3.41}
\frac{1}{2}\|\U^*(t)\|^2 +(\p(t),\U^*(t) ) \leq \frac{1}{2}\|\mathrm{W}\|^2 +(\p(t),\mathrm{W} ),  
\end{equation}
for all $\mathrm{W} \in \G_{\text{div}}$ and  almost every $t \in [0,T]$.
\end{theorem}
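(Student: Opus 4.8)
The plan is to exploit that the admissible set $\mathscr{U}_{\text{ad}}=\mathrm{L}^2(0,T;\G_{\text{div}})$ is a linear space, so that optimality of $\U^*$ produces a first-order \emph{equality}; the derivative of $\mathcal{J}$ at $\U^*$ is then identified through the adjoint state, reducing the asserted minimum principle to an algebraic completion of squares. First I would apply Theorem \ref{adjoint} with terminal data $\p_T=\mathbf{0}\in\G_{\text{div}}$ and $\eta_T=0\in\mathrm{V}$, taken relative to the optimal strong solution $(\u^*,\varphi^*)$ of \eqref{nonlin phi}-\eqref{initial conditions} associated with $\U^*$. This yields the unique weak solution $(\p,\eta)$ of \eqref{adj} with the regularity \eqref{space}, which is precisely the pair claimed in the theorem.

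Next I fix $\v\in\mathrm{L}^2(0,T;\G_{\text{div}})$ and, for $\lambda\neq 0$, let $(\u^\lambda,\varphi^\lambda)$ be the unique strong solution of \eqref{nonlin phi}-\eqref{initial conditions} with control $\U^*+\lambda\v$ and initial data \eqref{initial}. Writing $(\w^\lambda,\psi^\lambda):=\lambda^{-1}\big((\u^\lambda,\varphi^\lambda)-(\u^*,\varphi^*)\big)$, the Lipschitz-dependence estimate of Theorem \ref{thm2.16} (together with Remark \ref{imp}) bounds $(\w^\lambda,\psi^\lambda)$, uniformly in $\lambda$, in $\big(\mathrm{L}^{\infty}(0,T;\G_{\text{div}})\cap\mathrm{L}^2(0,T;\V_{\text{div}})\big)\times\big(\mathrm{L}^{\infty}(0,T;\mathrm{H})\cap\mathrm{L}^2(0,T;\mathrm{V})\big)$; passing to the limit $\lambda\to 0$ in the equations satisfied by $(\w^\lambda,\psi^\lambda)$ — using these bounds, the Aubin--Lions lemma, and the uniqueness statement of Theorem \ref{linearized} — identifies the limit $(\w,\psi)$ as the unique weak solution of the linearized system \eqref{lin w}-\eqref{lin initial conditions} around $(\widehat{\u},\widehat{\varphi})=(\u^*,\varphi^*)$ with $\widetilde{\h}=\mathbf{0}$, control $\v$, and zero initial data. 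Thus $\lambda\mapsto\mathcal{J}(\u^\lambda,\varphi^\lambda,\U^*+\lambda\v)$ is differentiable at $0$, and since $\U^*$ minimizes $\mathcal{J}$ over the linear space $\mathscr{U}_{\text{ad}}$, testing with the directions $\pm\v$ gives the optimality equality
\begin{align*}
&\int_0^T (\u^*(t)-\u_d(t),\w(t))\d t + \int_0^T (\varphi^*(t)-\varphi_d(t),\psi(t))\d t \\
&\qquad + \int_0^T (\U^*(t),\v(t))\d t = 0 .
\end{align*}

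The central step is to rewrite the first two integrals using $(\p,\eta)$. I would test the two equations of the linearized system \eqref{weak_lin} against $\p(t)$ and $\eta(t)$ respectively — admissible because $\p\in\mathrm{L}^2(0,T;\V_{\text{div}})$ and $\eta\in\mathrm{L}^2(0,T;\mathrm{H}^2)\subset\mathrm{L}^2(0,T;\mathrm{V})$ — test the two adjoint equations \eqref{0.1} against $\w(t)$ and $\psi(t)$, subtract, and integrate over $(0,T)$; the time-derivative terms collapse to $\big[(\w,\p)+(\psi,\eta)\big]_0^T=0$ since $\w(0)=\psi(0)=0$ and $\p(T)=\eta(T)=0$. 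Because the operators in \eqref{adj} were built as the formal $\mathrm{L}^2$-adjoints of the linearization — this is exactly the content of the displays for $[\partial_\u\mathscr{N}_1]^*$, $[\partial_\u\mathscr{N}_2]^*$, $[\partial_{\varphi}\mathscr{N}_1]^*$ and $[\partial_{\varphi}\mathscr{N}_2]^*$ — the viscous terms, the trilinear transport terms $b(\cdot,\cdot,\cdot)$, and the Korteweg-type couplings involving $\J$, $\nabla a$ and $\F''(\varphi^*)$ cancel in pairs, leaving
\begin{align*}
&\int_0^T (\u^*(t)-\u_d(t),\w(t))\d t + \int_0^T (\varphi^*(t)-\varphi_d(t),\psi(t))\d t \\
&\qquad = \int_0^T (\p(t),\v(t))\d t .
\end{align*}
Substituting into the optimality equality gives $\int_0^T (\p(t)+\U^*(t),\v(t))\d t=0$ for every $\v\in\mathrm{L}^2(0,T;\G_{\text{div}})$, hence $\U^*(t)=-\p(t)$ for a.e.\ $t\in[0,T]$, i.e.\ \eqref{nec}.

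Finally, \eqref{3.41} follows by completing the square: for a.e.\ $t\in[0,T]$ and every $\mathrm{W}\in\G_{\text{div}}$,
\begin{align*}
\tfrac{1}{2}\|\mathrm{W}\|^2 + (\p(t),\mathrm{W}) &= \tfrac{1}{2}\|\mathrm{W}+\p(t)\|^2 - \tfrac{1}{2}\|\p(t)\|^2 \\
&\geq -\tfrac{1}{2}\|\p(t)\|^2 = \tfrac{1}{2}\|\U^*(t)\|^2 + (\p(t),\U^*(t)),
\end{align*}
the last equality using $\U^*(t)=-\p(t)$. The step I expect to be the main obstacle is the joint handling of the limit $(\w^\lambda,\psi^\lambda)\to(\w,\psi)$ in the difference quotients and the term-by-term cancellation in the duality identity, in particular checking that each pairing (for instance $(\F''(\varphi^*)\Delta\eta,\psi)$ and $((\J\ast\varphi^*)\nabla\psi,\p)$) is finite under the regularity \eqref{0.22}--\eqref{0.24} and \eqref{space}; the remaining steps are routine and parallel \cite{BDM2}.
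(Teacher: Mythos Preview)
Your proof is correct and follows the same overall route as the paper: compute the G\^ateaux derivative of the reduced cost at $\U^*$, identify it via the linearized solution $(\w,\psi)$, transfer to the adjoint through the duality identity, deduce $\U^*=-\p$, and obtain \eqref{3.41} (the paper leaves this last step implicit; you make the completion of squares explicit). The one genuine difference is in how the G\^ateaux differentiability of the control-to-state map is established: the paper does it through a dedicated Lemma~\ref{lem3.8} that directly shows $\|(\u^\lambda-\u^*-\lambda\w,\ \varphi^\lambda-\varphi^*-\lambda\psi)\|=o(\lambda)$ in the relevant norms by writing down and estimating the system \eqref{4.44} satisfied by the remainder, whereas you bound the difference quotients uniformly via Theorem~\ref{thm2.16}, pass to the limit in the equations using Aubin--Lions, and identify the limit through the uniqueness in Theorem~\ref{linearized}. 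Both arguments are sound; the paper's direct-estimate route yields strong convergence of the quotients (more than is needed here) at the cost of the explicit remainder analysis, while your compactness route is lighter but requires the standard subsequence--uniqueness manoeuvre and some care when passing to the limit in the $\F'$ and convective nonlinearities.
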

{\it Proof}
	Let $(\u^*,\varphi^*,\U^*)\in\mathscr{A}_{\text{ad}}$ be the optimal triplet of the control problem \eqref{control problem}. Let $\mathcal{F}(\U)=\mathcal{J}(\u_{\U},\varphi_{\U},{\U})$, where $({\u}_{\U},\varphi_{\U},\U)$ is the solution of the system \eqref{nonlin phi}-\eqref{initial conditions} with control $\U\in\G_{\text{div}}$. Let $\U^*+\lambda\U\in\G_{\text{div}}$ such that $(\u_{\u^*+\lambda\U},\varphi_{\u^*+\lambda\U},\U_{\u^*+\lambda\U})\in\mathscr{A}_{\text{ad}}$, for all $0\leq \lambda \leq 1$. Then, for $\lambda\in[0,1]$, we can deduce
	\begin{align}\label{lam}
	&	\mathcal{F}(\U^*+ \lambda \U) - \mathcal{F}(\U^*)\nonumber \\&= \frac{1}{2}\int_0^T  \|\u_{\U^*+\lambda \U} (t)- \u_{\U^*}(t)\|^2 \d t+ \int_0^T (\u_{\U^*+\lambda \U} (t)- \u_{\U^*}(t),\u_{\U^*}(t) -\u_d(t))\d t \nonumber \\
	&\quad+\frac{1}{2}\int_0^T  \|\varphi_{\U^*+\lambda \U}(t) - \varphi_{\U^*}(t)\|^2 \d t+ \int_0^T (\varphi_{\U^*+\lambda \U}(t) - \varphi_{\U^*}(t),\varphi_{\U^*} (t)-\varphi_d(t))\d t\nonumber\\
	& \quad+ \frac{1}{2} \int_0^T  \lambda^2 (\U(t),\U(t))\d t +  \int_0^T  \lambda (\U(t), \U^*(t))\d t. 
	\end{align}
	Since $(\u_{\U^*+\lambda \U},\varphi_{\U^*+\lambda \U})$ and $(\u_{\U^*},\varphi_{\U^*})$ are the unique strong solutions of the system \eqref{nonlin phi}-\eqref{initial conditions} with controls $\U^*+\lambda \U$ and $\U^*$ respectively, using the estimates given in the Uniqueness Theorem (see for example Theorem \ref{unique}), $\|\u_{\U^*+\lambda \U} - \u_{\U^*}\|^2_{\mathrm{L}^2(0,T;\V_{\text{div}})}$ can be estimated by $\lambda^2 \|\U\|^2_{\mathrm{L}^{2}(0,T;\G_{\text{div}})}$. Thus dividing by $\lambda$, and then sending $\lambda \rightarrow 0$, we have  $\|\u_{\U^*+\lambda \U} - \u_{\U^*}\|_{\mathrm{L}^2(0,T;\G_{\text{div}})} \rightarrow 0$. Similarly  $\|\varphi_{\U^*+\lambda \U} - \varphi_{\U^*}\|_{\mathrm{L}^2(0,T;\mathrm{V}')} \rightarrow 0$ as $\lambda \rightarrow 0$. 
	
	Let us denote the G\^ateaux derivative of $\mathcal{F}$ at $\U^*$ in the direction of $\U\in\G_{\text{div}}$ by $(\mathcal{F}'(\U^*),\U)$. Let $(\w,\psi)$ satisfy the linearized system \eqref{lin w}-\eqref{lin initial conditions} with control $\U$, and initial data and forcing term to be equal to zero, that is, $\w(0)=\widetilde{\h}=\mathbf{0}$ and $\psi(0)=0$.  From Lemma \ref{lem3.8} (see below), we have 
	\begin{align}\label{conv1}
	\lim_{\lambda\to0}\frac{\|\u_{\U^*+\lambda \U} - \u_{\U^*}-\lambda\w\|_{\mathrm{L}^2(0,T;\V_{\text{div}})}}{|\lambda|}=0,
	\end{align}
	and 
	\begin{align}\label{conv2}
	\lim_{\lambda\to0}\frac{\|\varphi_{\U^*+\lambda \U} - \varphi_{\U^*}-\lambda\psi\|_{\mathrm{L}^2(0,T;\mathrm{H})}}{|\lambda|}=0,
	\end{align}
	since $\u_{\U^*}-\u_d\in\mathrm{L}^2(0,T;\V_{\text{div}})\subset\mathrm{L}^2(0,T;\G_{\text{div}})\subset \mathrm{L}^2(0,T;\V_{\text{div}}')$ and $\varphi_{\U^*}-\varphi_d\in\mathrm{L}^2(0,T;\mathrm{V})$. Dividing by $\lambda$ and then taking $\lambda \rightarrow 0$ in (\ref{lam}), we obtain
	\begin{align*}
	0 &\leq (\mathcal{F}'(\U^*), \U) = \underset{\lambda \rightarrow 0}{\lim} \frac{\mathcal{F}(\U^*+ \lambda \U)- \mathcal{F}(\U^*)}{|\lambda|} \\
	&= \int_0^T  (\w(t),\u_{\U^*}(t) -\u_d(t))\d t + \int_0^T  (\psi(t),\varphi_{\U^*}(t) -\varphi_d(t)) \d t+ \int_0^T (\U(t),\U^*(t)) \d t,
	\end{align*} 
	where
	\begin{align}
	\label{4.16}
	\w=\lim_{\lambda\to0}\frac{\u_{\U^*+\lambda \U} - \u_{\U^*}}{|\lambda|}\text{ and }\psi=\lim_{\lambda\to0}\frac{\varphi_{\U^*+\lambda \U} - \varphi_{\U^*}}{|\lambda|}.
	\end{align}
	Identifying the inner product in $\G_{\text{div}}$  with the duality pairing between $\V_{\text{div}}$ and $\V_{\text{div}}'$, using (\ref{adj}), we obtain 
	\begin{align*}
	0 \leq (\mathcal{F}'(\U^*) , \U )&= \int_0^T  (\w,-\p_t-\nu \Delta \p -(\p \cdot \nabla^T)\u^*+ (\u^* \cdot \nabla)\p -  \eta\nabla \varphi^*-\nabla q)\d t \\
	&\quad +\int_0^T  (\psi, -\eta_t + \J \ast (\p \cdot \nabla \varphi^*) - (\nabla\J \ast \varphi^*)\cdot\p +  \nabla a\cdot \p \varphi^*)\d t \\
	&\quad + \int_0^T (\psi, -\u^* \cdot\nabla \eta - a \Delta \eta + \J\ast \Delta \eta - \F''(\varphi^*)\Delta \eta)\d t+ \int_0^T (\U,\U^*)\d t.
	\end{align*}
	Since  $\nabla\cdot\w=0$ and $\nabla\cdot\p=0$, an integration by parts yields 
	\begin{align*}
	0 \leq &(\mathcal{F}'(\U^*), \U )= \int_0^T  (\w_t - \nu \Delta \w +(\w \cdot \nabla)\u^* + (\u^* \cdot \nabla)\w + \nabla a\varphi^* \psi,\p)\d t  \\
	&\quad+\int_0^T   ((\J \ast \psi) \nabla \varphi^*+(\J \ast \varphi^*) \nabla \psi-\nabla\widetilde{\uppi}_{\w},\p)\d t  \\
	&\quad+ \int_0^T (\psi_t+\nabla \varphi^*\cdot \w+\u^* \cdot \nabla \psi-  \Delta (a \psi- \J\ast  \psi- \F''(\varphi^*) \psi), \eta) \d t+ \int_0^T (\U,\U^*) \d t \\ 
	&= \int_0^T (\U(t),\p(t)) \d t+ \int_0^T (\U(t),\U^*(t))\d t,
	\end{align*}  
	where  last equality follows; thanks to the equation satisfied by $\w$ with control $\U$ and  $\widetilde{\h}=\mathbf{0}$. Thus, we have 
	$$0 \leq (\mathcal{F}'(\U^*(t)), \U(t)) = \int_0^T  (\U(t),\p(t))\d t + \int_0^T  (\U(t),\U^*(t))\d t.$$ 
	Similarly if we take the directional derivative of $\mathcal{F}$ in the direction of $-\U\in\G_{\text{div}}$, we  obtain                                                                                                                                                                                                                                                                            
	$(\mathcal{F}'(\U^*(t)), \U(t)) \leq 0.$
	Hence, we obtain 
	$(\mathcal{F}'(\U^*(t)) , \U(t) )= 0,$ and we have 
	\begin{align}\label{4.51}
	\int_0^T  (\U(t),\p(t)) \d t+ \int_0^T  (\U(t),\U^*(t)) \d t&= 0, \end{align} for all $\U\in\G_{\text{div}}$. Thus it is immediate that 
	\begin{align*}
	(\U(t),\p(t) + \U^*(t)) &= 0, \ \text{ for all } \ \U \in \G_{\text{div}}\  \text{  and  a.e. }\  t \in [0,T].
	\end{align*}
	Since the above equality is true for all $\U \in \G_{\text{div}}$, we get  
\begin{equation*}
 \U^*(t)=-\p(t), \ \text{ a.e. }\  t \in [0,T].
\end{equation*}

\begin{lemma}\label{lem3.8}
	Let $(\u_0, \varphi_0)$ satisfies (\ref{initial}) and $\F(\cdot)$ satisfies \ref{fes}, the mapping $\U \mapsto (\u_{\U},\varphi_{\U})$  from $\mathscr{U}_{\text{ad}}$ into  $\left(\mathrm{C}([0,T];\G_{\text{div}})\cap\mathrm{L}^{2}(0,T;\V_{\text{div}})\right) \times \left(\mathrm{C}([0,T];\mathrm{V}')\cap\mathrm{L}^{2}(0,T;\mathrm{H})\right)$ is G\^ateaux differentiable. Furthermore, we have 
	\begin{align}
	\label{limit}
	\left( \lim_{\lambda\to0}\frac{\u_{\U^*+\lambda \U} - \u_{\U^*}}{|\lambda|}, \lim_{\lambda\to0}\frac{\varphi_{\U^*+\lambda \U} - \varphi_{\U^*}}{|\lambda|} \right) = (\w , \psi), \end{align}
	where $(\w, \psi)$ is the \emph{unique weak solution} of 
	\begin{eqnarray}\label{4.17a}
	\left\{
	\begin{aligned} 
	\w_t - \nu \Delta \w + (\w \cdot \nabla )\u_{\U^*} &+ (\u_{\U^*} \cdot \nabla )\w + \nabla \widetilde{\uppi}_\w \\ &= -\nabla a\psi \varphi_{\U^*} -(\J\ast \psi) \nabla \varphi_{\U^*}  - (\J \ast \varphi_{\U^*}) \nabla \psi  +\U,\ \text{ in } \ \Omega\times(0,T),  \\
	\psi_t + \w\cdot \nabla \varphi_{\U^*} + \u_{\U^*} \cdot\nabla \psi &= \Delta \widetilde{\mu}, \ \text{ in }\ \Omega\times(0,T),   \\ 
	\widetilde{\mu} &= a \psi - \J\ast \psi + \F''(  \varphi_{\U^*}) \psi,\ \text{ in }\ \Omega\times(0,T),   \\
	\text{div }\w &= 0,\ \text{ in }\ \Omega\times(0,T),   \\
	\frac{\partial \widetilde{\mu}}{\partial\mathbf{n}} &= \mathbf{0}, \ \w=\mathbf{0}  \ \text{on } \ \partial \Omega \times (0,T),  \\
	\w(0) &= \mathbf{0}, \ \psi(0) = 0 \ \text{ in } \ \Omega. 
	\end{aligned}
	\right.
	\end{eqnarray}
	That is, we have 
		\begin{align*}
	\lim_{\lambda\to0}\frac{\|\u_{\U^*+\lambda \U} - \u_{\U^*}-\lambda\w\|_{\mathrm{L}^2(0,T;\V_{\text{div}})}}{|\lambda|}=0,\
	\lim_{\lambda\to0}\frac{\|\varphi_{\U^*+\lambda \U} - \varphi_{\U^*}-\lambda\psi\|_{\mathrm{L}^2(0,T;\mathrm{H})}}{|\lambda|}=0,
	\end{align*}
and the pair $(\u_{\U^*},\varphi_{\U^*})$  and $(\u_{\U^*+\lambda\U},\varphi_{\U^*+\lambda\U})$ are the unique strong solutions of the system (\ref{nonlin phi})-(\ref{initial conditions}) with  controls $\U^*$ and $\U^*+\lambda\U$, respectively.
\end{lemma}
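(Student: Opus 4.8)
The plan is to identify $(\w,\psi)$ as the limit of the difference quotients by an energy estimate on the remainder, in the spirit of the continuous dependence estimates of Theorems~\ref{unique} and \ref{thm2.16}. Fix $\U\in\mathscr{U}_{\text{ad}}$; for $\lambda\in(0,1]$ put $\u^\lambda:=\u_{\U^*+\lambda\U}$, $\varphi^\lambda:=\varphi_{\U^*+\lambda\U}$, $\u^*:=\u_{\U^*}$, $\varphi^*:=\varphi_{\U^*}$. By Theorem~\ref{strongsol} these are components of unique strong solutions, and since $\{\U^*+\lambda\U:\lambda\in[0,1]\}$ stays in a bounded subset of $\mathrm{L}^2(0,T;\G_{\text{div}})$, every norm occurring in \eqref{0.22}--\eqref{0.24} is bounded uniformly in $\lambda$; in particular $\sup_{\lambda\in[0,1]}\|\varphi^\lambda\|_{\mathrm{L}^{\infty}((0,T)\times\Omega)}<\infty$. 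Theorem~\ref{thm2.16} and Remark~\ref{imp} give $\|\u^\lambda-\u^*\|_{\mathrm{C}([0,T];\G_{\text{div}})\cap\mathrm{L}^2(0,T;\V_{\text{div}})}+\|\varphi^\lambda-\varphi^*\|_{\mathrm{C}([0,T];\mathrm{V})\cap\mathrm{L}^2(0,T;\mathrm{H}^2)}\le C\lambda\|\U\|_{\mathrm{L}^2(0,T;\G_{\text{div}})}$, so the difference quotients $\mathbf{V}_\lambda:=\lambda^{-1}(\u^\lambda-\u^*)$ and $W_\lambda:=\lambda^{-1}(\varphi^\lambda-\varphi^*)$ are bounded, uniformly in $\lambda$, in $\mathrm{L}^{\infty}(0,T;\G_{\text{div}})\cap\mathrm{L}^2(0,T;\V_{\text{div}})$ and in $\mathrm{L}^{\infty}(0,T;\mathrm{V})\cap\mathrm{L}^2(0,T;\mathrm{H}^2)$, respectively; moreover $\overline{W_\lambda}=0$ by conservation of mass.

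Next I would subtract the state equations written for $\U^*+\lambda\U$ and for $\U^*$ and divide by $\lambda$. Using the algebraic identities $(\u^\lambda\cdot\nabla)\u^\lambda-(\u^*\cdot\nabla)\u^*=\lambda[(\mathbf{V}_\lambda\cdot\nabla)\u^*+(\u^*\cdot\nabla)\mathbf{V}_\lambda]+\lambda^2(\mathbf{V}_\lambda\cdot\nabla)\mathbf{V}_\lambda$, $(\varphi^\lambda)^2-(\varphi^*)^2=\lambda W_\lambda(\varphi^\lambda+\varphi^*)$, the analogous splittings of $\u^\lambda\cdot\nabla\varphi^\lambda-\u^*\cdot\nabla\varphi^*$ and of $(\J\ast\varphi^\lambda)\nabla\varphi^\lambda-(\J\ast\varphi^*)\nabla\varphi^*$, together with the second order Taylor expansion $\F'(\varphi^\lambda)=\F'(\varphi^*)+\F''(\varphi^*)(\varphi^\lambda-\varphi^*)+\tfrac12\F'''(\zeta_\lambda)(\varphi^\lambda-\varphi^*)^2$ with $\zeta_\lambda$ between $\varphi^*$ and $\varphi^\lambda$, one checks that $(\mathbf{V}_\lambda,W_\lambda)$ solves the linearized system \eqref{4.17a} with control $\U$, modulo a forcing $\lambda(\mathbf{f}_\lambda,g_\lambda)$ with $\mathbf{f}_\lambda=-(\mathbf{V}_\lambda\cdot\nabla)\mathbf{V}_\lambda-\tfrac12\nabla a\,W_\lambda^2-(\J\ast W_\lambda)\nabla W_\lambda$ and $g_\lambda=-\mathbf{V}_\lambda\cdot\nabla W_\lambda+\tfrac12\Delta(\F'''(\zeta_\lambda)W_\lambda^2)$. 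Hence, with $\mathbf{R}_\lambda:=\mathbf{V}_\lambda-\w$ and $R_\lambda:=W_\lambda-\psi$, subtracting \eqref{4.17a} shows that $(\mathbf{R}_\lambda,R_\lambda)$ solves the linearized system around $(\u^*,\varphi^*)$ with zero initial data, zero control, and right hand side $\lambda(\mathbf{f}_\lambda,g_\lambda)$; by the first paragraph, each of $\B(\mathbf{V}_\lambda,\mathbf{V}_\lambda)$, $W_\lambda^2$, $(\J\ast W_\lambda)\nabla W_\lambda$, $\mathbf{V}_\lambda\cdot\nabla W_\lambda$ and $\F'''(\zeta_\lambda)W_\lambda^2$ lies in a bounded subset, uniformly in $\lambda$, of the relevant Bochner space.

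Finally I would run the energy estimate, testing the $\mathbf{R}_\lambda$-equation with $\mathbf{R}_\lambda$ in $\G_{\text{div}}$ and the $R_\lambda$-equation with $\mathcal{B}^{-1}R_\lambda$ (legitimate because $\overline{R_\lambda}=0$), exactly as in the continuous dependence proof of Theorem~\ref{unique}: the cancellation $b(\u^*,\mathbf{R}_\lambda,\mathbf{R}_\lambda)=0$, the 2D Ladyzhenskaya and Agmon inequalities (Lemma~\ref{lady} and \eqref{agm}) for the terms $b(\mathbf{R}_\lambda,\u^*,\mathbf{R}_\lambda)$ and the transport terms, the strong solution regularity \eqref{0.22}--\eqref{0.24} of $(\u^*,\varphi^*)$, Assumption~\ref{prop of F and J}(2) for the $\F''(\varphi^*)R_\lambda$ term, and $\|\J\ast R_\lambda\|\le\|\J\|_{\mathrm{W}^{1,1}}\|R_\lambda\|_{\mathrm{V}'}$ let one absorb every linear term into $\tfrac{\nu}{2}\|\nabla\mathbf{R}_\lambda\|^2+c\|R_\lambda\|^2$ plus a Gronwall factor built from the (uniformly bounded) norms of $(\u^*,\varphi^*)$; the forcing $\lambda(\mathbf{f}_\lambda,g_\lambda)$, after Young's inequality, contributes at most $C\lambda^2\int_0^T\big(\|\B(\mathbf{V}_\lambda,\mathbf{V}_\lambda)\|_{\V_{\text{div}}'}^2+\|W_\lambda\|_{\mathrm{L}^4}^4+\|\mathbf{V}_\lambda\cdot\nabla W_\lambda\|_{\mathrm{L}^2}^2+\cdots\big)\d t=O(\lambda^2)$, using the uniform bounds just recorded and $\sup_\lambda\|\F'''(\zeta_\lambda)\|_{\mathrm{L}^{\infty}}<\infty$. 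Gronwall's lemma then yields $\|\mathbf{R}_\lambda\|_{\mathrm{C}([0,T];\G_{\text{div}})\cap\mathrm{L}^2(0,T;\V_{\text{div}})}^2+\|R_\lambda\|_{\mathrm{C}([0,T];\mathrm{V}')\cap\mathrm{L}^2(0,T;\mathrm{H})}^2\le C\lambda^2\to0$ as $\lambda\to0$, which is precisely \eqref{limit} together with the two displayed limits; since $\U\mapsto(\w,\psi)$ is linear and, by the a priori bounds in Theorem~\ref{linearized}, bounded into the target space, G\^ateaux differentiability follows. The main obstacle is to verify that the remainder forcing is genuinely of order $\lambda$ in the dual norms against which it is tested --- in particular the Taylor remainder $\Delta(\F'''(\zeta_\lambda)W_\lambda^2)$ and the capillarity remainders $(\J\ast W_\lambda)\nabla W_\lambda$ and $(\mathbf{V}_\lambda\cdot\nabla)\mathbf{V}_\lambda$; this is what forces the use of the full strong solution regularity of $\varphi^*$ (so that $\varphi^*\in\mathrm{L}^{\infty}(0,T;\mathrm{W}^{1,p})$ for all $p$) together with the uniform-in-$\lambda$ $\mathrm{L}^{\infty}$-bound on $\varphi^\lambda$, and the choice of $\mathcal{B}^{-1}R_\lambda$ as test function so that the second order $\Delta$-terms integrate by parts against $\nabla(\mathcal{B}^{-1}R_\lambda)$.
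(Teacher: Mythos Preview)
Your proposal is correct and follows essentially the same route as the paper: both write the system satisfied by the remainder $(\mathbf{R}_\lambda,R_\lambda)$ (the paper calls it $(\y,\varrho)=(\doublewidetilde{\u}-\lambda\w,\doublewidetilde{\varphi}-\lambda\psi)$, working unscaled), identify the same quadratic forcing terms $-(\mathbf{V}_\lambda\cdot\nabla)\mathbf{V}_\lambda$, $-\tfrac12\nabla a\,W_\lambda^2$, $-(\J\ast W_\lambda)\nabla W_\lambda$, $-\mathbf{V}_\lambda\cdot\nabla W_\lambda$, and the Taylor remainder $\tfrac12\Delta(\F'''(\zeta_\lambda)W_\lambda^2)$, and test the concentration equation against $\mathcal{B}^{-1}$ of the remainder. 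The only presentational difference is that you run the energy/Gronwall estimate directly, whereas the paper packages this step by invoking Theorems~\ref{unique}, \ref{weakstrong}, \ref{thm2.16} (which are themselves proved by exactly that estimate) after bounding the forcing in $\mathrm{L}^2(0,T;\V_{\text{div}}')\times\mathrm{L}^2(0,T;\mathrm{H}^{-2})$.
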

\begin{proof}
	In order to prove (\ref{limit}), we need to prove (\ref{conv1}) and (\ref{conv2}). 
	Let us set $$(\doublewidetilde{\u},\doublewidetilde{\varphi}) = (\u_{\U^*+\lambda \U} - \u_{\U^*}  , \varphi_{\U^*+\lambda \U} - \varphi_{\U^*} )$$
	$$ \text{and} \; \; (\y, \varrho) = (\doublewidetilde{\u} -\lambda \w , \doublewidetilde{\varphi} -\lambda \psi).$$
	Observe that, $(\doublewidetilde{\u},\doublewidetilde{\varphi})$ satisfies the following system:
	\begin{eqnarray}\label{4.18a}
	\left\{
	\begin{aligned}
	{\doublewidetilde{\u}}_t - \nu \Delta \doublewidetilde{\u} + (\doublewidetilde{\u} \cdot \nabla )\doublewidetilde{\u}&+ (\doublewidetilde{\u} \cdot \nabla )\u_{\U^*} + (\u_{\U^*} \cdot \nabla )\doublewidetilde{\u} + \nabla \doublewidetilde{\uppi}_{\doublewidetilde{\u}} \\&= - \frac{\nabla a}{2} \doublewidetilde{\varphi}^2 - \nabla a \doublewidetilde{\varphi} \varphi_{\U^*} -(\J\ast \doublewidetilde{\varphi}) \nabla \doublewidetilde{\varphi}, -(\J\ast \doublewidetilde{\varphi}) \nabla \varphi_{\U^*}\\
	& \quad -(\J \ast \varphi_{\U^*} ) \nabla \doublewidetilde{\varphi}  + \lambda \U, \ \text{ in } \ \Omega\times(0,T),\\
	{\doublewidetilde{\varphi}}_t + \doublewidetilde{\u} \cdot \nabla \doublewidetilde{\varphi}+ \doublewidetilde{\u} \cdot \nabla \varphi_{\U^*} + \u_{\U^*} \cdot\nabla \doublewidetilde{\varphi} &= \Delta \doublewidetilde{\mu}, \ \text{ in } \ \Omega\times(0,T), \\ 
	\doublewidetilde{\mu} &= a \doublewidetilde{\varphi} - \J\ast \doublewidetilde{\varphi} + \mathrm{F}'(\varphi_{\U^*+\lambda\U})-\mathrm{F}'(\varphi_{\U^*}),  \\
	\text{div }\doublewidetilde{\u} &= 0, \ \text{ in } \ \Omega\times(0,T), \\
	\frac{\partial \doublewidetilde{\mu}}{\partial\mathbf{n}} &= \mathbf{0}, \ \doublewidetilde{\u}=\mathbf{0}  \ \text{on } \ \partial \Omega \times (0,T),\\
	\doublewidetilde{\u}(0) &= \mathbf{0}, \ \doublewidetilde{\varphi}(0) = 0 \ \text{ in } \ \Omega,
	\end{aligned}
	\right.
	\end{eqnarray}
	Let us now use Taylor's series expansion of $\mathrm{F}'(\varphi_{\U^*+\lambda \U}) - \mathrm{F}'(\varphi_{\U^*})$ up to third order to obtain 
	\begin{equation*}
\mathrm{F}'(\varphi_{\U^*+\lambda \U}) - \mathrm{F}'(\varphi_{\U^*})=\F''(\varphi_{\U^*})  \doublewidetilde{\varphi} + \F'''(\varphi_{\U^*} + \theta \doublewidetilde{\varphi})\frac{\doublewidetilde{\varphi}^2}{2},
\end{equation*}
for some $0<\theta<1$. 
	Using the decomposition of $\mu\nabla\varphi$ given in \eqref{mh}, we know that $\widetilde{\uppi}_{\U^*}=\uppi_{\U^*}-[\mathrm{F}(\varphi_{\U^*})+\frac{a}{2}\varphi_{\U^*}^2]$ and hence we  have 
	\begin{align}\label{pcal}\doublewidetilde{\uppi}_{\doublewidetilde{\u}}=\uppi_{\U^*+\lambda \U}-\uppi_{\U^*}-\left[\F(\varphi_{\U^*+\lambda\U})-\F(\varphi_{\U^*})+\frac{a}{2}\left(\varphi_{\U^*+\lambda \U}^2-\varphi_{\U^*}^2\right)\right].\end{align} 
	\iffalse 
	While considering (\ref{4.18a}), we ignored the second order terms involving $\doublewidetilde{\varphi}^2$ in the equation for $\widetilde{\mu}$, since $\varphi_{\U^*}\in\mathrm{L}^{\infty}((0,T)\times\Omega)$ and $\F(\cdot)$ has a polynomial growth in $\varphi_{\U^*}$ (see Remark \ref{remark F}). A same kind of approximation we did for $\doublewidetilde{\uppi}_{\doublewidetilde{\u}}$ in (\ref{pcal}) also. 
	\fi
	
	Moreover, it can be shown that $(\y, \varrho)$ satisfies the following system:
	\begin{eqnarray}\label{4.44}
	\left\{
	\begin{aligned}
	\y_t - \nu \Delta \y + (\y \cdot \nabla )\u_{\U^*} + (\u_{\U^*} \cdot \nabla )\y &+ \nabla \uppi_\y + (\J\ast \varrho) \nabla \varphi_{\U^*} + (\J \ast \varphi_{\U^*}) \nabla \varrho  - \nabla a \varrho \varphi  \\ &= -(\doublewidetilde{\u} \cdot \nabla )\doublewidetilde{\u} 
	- \frac{\nabla a}{2} \doublewidetilde{\varphi}^2 -(\J\ast \doublewidetilde{\varphi}) \nabla \doublewidetilde{\varphi} ,\ \text{ in } \ \Omega\times(0,T),  \\ 
	\varrho_t + \y \cdot \nabla \varphi_{\U^*} + \u_{\U^*} \cdot \nabla \varrho  - \Delta \widetilde{\mu}_{\varrho} &= - \doublewidetilde{\u} \cdot \nabla\doublewidetilde{\varphi}, \ \text{ in } \ \Omega\times(0,T), \\ 
	\widetilde{\mu}_{\varrho} &= a \varrho - \J\ast \varrho + \F''(\varphi_{\U^*}) \varrho + \F'''(\varphi_{\U^*} + \theta \doublewidetilde{\varphi})\frac{\doublewidetilde{\varphi}^2}{2},\\
	\text{div }\y &= 0, \ \text{ in } \ \Omega\times(0,T), \\
	\frac{\partial \widetilde{\mu}_{\varrho}}{\partial\mathbf{n}} &= \mathbf{0}, \ \y=\mathbf{0}  \ \text{on } \ \partial \Omega \times (0,T),\\
	\y(0) &= \mathbf{0}, \ \varrho(0) =0  \ \text{ in } \ \Omega,
	\end{aligned}
	\right.
	\end{eqnarray}
	where \begin{align*}\uppi_{\y}&=(\uppi_{\U^*+\lambda \U}-\uppi_{\U^*})-\lambda\widetilde{\uppi}\no\\&\quad-\left[\F(\varphi_{\U^*+\lambda\U})-\F(\varphi_{\U^*})-\lambda\mathrm{F}'(\varphi_{\U^*})\psi+\frac{a}{2}\left(\varphi_{\U^*+\lambda \U}^2-\varphi_{\U^*}^2\right)-\lambda a\varphi_{\U^*}\psi\right]\\&=\Upsilon-\varrho(\mathrm{F}'(\varphi_{\U^*})+a\varphi_{\U^*})-\frac{\doublewidetilde{\varphi}^2}{2}\left(a+\F''(\varphi_{\U^*}+\theta_1\doublewidetilde{\varphi})\right),\end{align*}  for some $0<\theta_1<1$. In the above estimate, we used the Taylor series expansion and we defined $\Upsilon:=(\uppi_{\U^*+\lambda \U}-\uppi_{\U^*})-\lambda\widetilde{\uppi}$. 
	
	Let us denote the nonlinear terms in the equations for $\y$, $\varrho$ in\eqref{4.44} by $\mathbf{k}(x,t)$, $l(x,t)$ respectively. That is, we have 
	\begin{equation}\label{4.58}
	\left\{
	\begin{aligned}
	\mathbf{k}(x,t) &= - (\doublewidetilde{\u} \cdot \nabla )\doublewidetilde{\u} - \frac{\nabla a}{2} \doublewidetilde{\varphi}^2  -(\J\ast \doublewidetilde{\varphi}) \nabla \doublewidetilde{\varphi} - \nabla \uppi_\y, \\
	l(x,t) &= - \doublewidetilde{\u} \cdot \nabla\doublewidetilde{\varphi}+\frac{1}{2} \Delta (\F'''(\varphi_{\U^*} + \theta \doublewidetilde{\varphi})\doublewidetilde{\varphi}^2).
	\end{aligned}
	\right.
	\end{equation}
	Our next aim is to show that
	\begin{align}
	&\|\mathbf{k}(x,t) \|_{\mathrm{L}^{2}(0,T;\V_{\text{div}}')}  \leq C\left( \|\doublewidetilde{\u}\|_{\mathrm{L}^{\infty}(0,T;\G_{\text{div}})}\|\doublewidetilde{\u}\|_{\mathrm{L}^2(0,T;\V_{\text{div}})}+\|\doublewidetilde{\varphi} \|_{\mathrm{L}^{\infty}(0,T;\mathrm{H})}\|\doublewidetilde{\varphi}\|_{\mathrm{L}^2(0,T;\mathrm{V})}\right), \label{k} \\
	&\|l(x,t)\|_{\mathrm{L}^{2}(0,T;\mathrm{H}^{-2})}  \leq C\left( \|\doublewidetilde{\u}\|_{\mathrm{L}^{\infty}(0,T;\G_{\text{div}})}\|\doublewidetilde{\u}\|_{\mathrm{L}^2(0,T;\V_{\text{div}})}\right.\nonumber\\&\qquad \left.+\|\doublewidetilde{\varphi} \|_{\mathrm{L}^{\infty}(0,T;\mathrm{H})}\sum_{k=1}^{p-2}\|\doublewidetilde{\varphi}\|_{\mathrm{L}^{2k}(0,T;\mathrm{V})}^{k}+\|\doublewidetilde{\varphi} \|_{\mathrm{L}^{2}(0,T;\mathrm{H}^2)}\sum_{k=1}^{p-2}\|\doublewidetilde{\varphi}\|_{\mathrm{L}^{\infty}((0,T)\times\Omega)}^{k}\right),\label{l} 
	\end{align} 
	for $p\geq 3$ and since $\F''$ has a polynomial growth. Further by using Theorems \ref{unique}, \ref{weakstrong} and Remark \ref{imp}, imply that
	\begin{align}
	\|\y\|_{\mathrm{L}^{2}(0,T;\V_{\text{div}})}&\leq C	\|\mathbf{k}(x,t) \|_{\mathrm{L}^{2}(0,T;\V_{\text{div}}')}  \leq C\lambda^2,\label{y}  \\
	\|\varrho\|_{\mathrm{L}^{2}(0,T;\mathrm{H})}  & \leq C	\|l(x,t)\|_{\mathrm{L}^{2}(0,T;\mathrm{H}^{-2})} \leq C\sum_{k=1}^{p-2}\lambda \lambda^{k}.\label{r}
	\end{align}

	In order to get the required bound in \eqref{k}, we take the inner product  of $\mathbf{k}(x,t)$ with $\y$ to obtain 
	\begin{align}
	\langle\mathbf{k}(x,t),\y\rangle &=- \langle(\doublewidetilde{\u} \cdot \nabla )\doublewidetilde{\u},\y\rangle  - \frac{1}{2}(\nabla a\doublewidetilde{\varphi}^2,\y) - ((\J\ast \doublewidetilde{\varphi}) \nabla \doublewidetilde{\varphi},\y) - (\nabla \uppi_\y,\y), \label{4.17} 
	\end{align}
	Using devergence free condition we get $(\nabla \uppi_\y,\y)=0$. Further using an integration by parts, divergence free condition and H\"older's inequality, we know that 
	\begin{align*}
	|\langle(\doublewidetilde{\u} \cdot \nabla )\doublewidetilde{\u},\y\rangle|\leq \|\doublewidetilde{\u}\|_{\mathbb{L}^4}^2\|\nabla\y\|\leq \|\doublewidetilde{\u}\|_{\mathbb{L}^4}^2\|\y\|_{\V_{\text{div}}},
	\end{align*}
	so that by using the Ladyzhenskaya inequality, we get 
	\begin{align*}
	\int_0^T\|(\doublewidetilde{\u}(t) \cdot \nabla )\doublewidetilde{\u}(t)\|_{\V_{\text{div}}'}^2\d t&\leq \int_0^T\|\doublewidetilde{\u}(t)\|_{\mathbb{L}^4}^4\d t\leq 2\sup_{t\in[0,T]}\|\doublewidetilde{\u}(t)\|^2\int_0^T\|\nabla\doublewidetilde{\u}(t)\|^2\d t.
	\end{align*}
	Thus, we obtain 
	\begin{align}\label{4.22a}
	\|(\doublewidetilde{\u}\cdot\nabla)\doublewidetilde{\u}\|_{\mathrm{L}^2(0,T;\V_{\text{div}}')}\leq \sqrt{2}\|\doublewidetilde{\u}\|_{\mathrm{L}^{\infty}(0,T;\G_{\text{div}})}\|\doublewidetilde{\u}\|_{\mathrm{L}^2(0,T;\V_{\text{div}})}.
	\end{align}
	Using H\"older's inequality and Poincar\'e inequality, we also have
	\begin{align*}
	\left|\frac{1}{2}(\nabla a \doublewidetilde{\varphi}^2,\y)\right| &\leq \frac{1}{2}\|\nabla a\|_{\mathbb{L}^{\infty}}  \|\doublewidetilde{\varphi}\|_{\mathrm{L}^4}^2 \| \y\|\leq \frac{1}{2\sqrt{\lambda_1}}\|\nabla a\|_{\mathbb{L}^{\infty}}  \|\doublewidetilde{\varphi}\|_{\mathrm{L}^4}^2 \| \y\|_{\V_{\text{div}}} . 
	\end{align*}
	Hence, we get 
	\begin{align*}
	\int_0^T\left\|\frac{\nabla a}{2} \doublewidetilde{\varphi}^2(t)\right\|_{\V_{\text{div}}'}^2\d t&\leq \frac{1}{4\lambda_1}\|\nabla a\|_{\mathbb{L}^{\infty}}^2\int_0^T\|\doublewidetilde{\varphi}(t)\|_{\mathrm{L}^4}^4\d t\no\\&\leq C\sup_{t\in[0,T]}\|\doublewidetilde{\varphi}(t)\|^2\int_0^T\|\nabla\doublewidetilde{\varphi}(t)\|^2\d t. 
	\end{align*}
	Thus, it follows that
	\begin{align}\label{4.23a}
	\left\|\frac{\nabla a}{2} \doublewidetilde{\varphi}^2\right\|_{\mathrm{L}^2(0,T;\V_{\text{div}}')} &\leq C\|\doublewidetilde{\varphi} \|_{\mathrm{L}^{\infty}(0,T;\mathrm{H})}\|\doublewidetilde{\varphi}\|_{\mathrm{L}^2(0,T;\mathrm{V})},
	\end{align}
	where $C=\frac{1}{\sqrt{2\lambda_1}}\|\nabla a\|_{\mathbb{L}^{\infty}}$. Once again using an integration by parts, H\"older's inequality and Poincar\'e inequality, we obtain 
	\begin{align*}
	|	((\J\ast \doublewidetilde{\varphi}) \nabla \doublewidetilde{\varphi},\y) |\leq \| \nabla \J\|_{\mathbb{L}^1}\|\doublewidetilde{\varphi}\|_{\mathrm{L}^4}^2   \|\y\|\leq \frac{1}{\sqrt{\lambda_1}}\| \nabla \J\|_{\mathbb{L}^1}\|\doublewidetilde{\varphi}\|_{\mathrm{L}^4}^2   \|\y\|_{\V_{\text{div}}}.
	\end{align*}
	An estimate similar to (\ref{4.23a}) and \eqref{Estimate J} yields 	
	\begin{align}\label{4.24a}
	\|(\J\ast \doublewidetilde{\varphi}) \nabla \doublewidetilde{\varphi}\|_{\mathrm{L}^2(0,T;\V_{\text{div}}')} &\leq C\|\doublewidetilde{\varphi} \|_{\mathrm{L}^{\infty}(0,T;\mathrm{H})}\|\doublewidetilde{\varphi}\|_{\mathrm{L}^2(0,T;\mathrm{V})},
	\end{align}
	where $C=\frac{1}{\sqrt{\lambda_1}}\|\nabla\mathrm{J}\|_{\mathbb{L}^1}$. Combining (\ref{4.22a})-(\ref{4.24a}), we finally obtain (\ref{k}).  
	 
	 Now, to get the required bound in \eqref{l}, we take inner product of $\mathcal{B}^{-1}(\varrho - \overline{\varrho}) $ to obtain 
		\begin{align}
	&( l(x,t)-\overline{l(x,t)},\mathcal{B}^{-1}(\varrho - \overline{\varrho}) ) = - ( \doublewidetilde{\u} \cdot \nabla\doublewidetilde{\varphi},\mathcal{B}^{-1}(\varrho - \overline{\varrho}) ) \nonumber\\&\quad  + \frac{1}{2}(\Delta (\F'''(\varphi_{\U^*} + \theta \doublewidetilde{\varphi})\doublewidetilde{\varphi}^2),\mathcal{B}^{-1}(\varrho - \overline{\varrho}))- \frac{1}{2}(\overline{\Delta (\F'''(\varphi_{\U^*} + \theta \doublewidetilde{\varphi})\doublewidetilde{\varphi}^2)},\mathcal{B}^{-1}(\varrho - \overline{\varrho})).  \label{4.18} 
	\end{align}
Recalling, $(\overline{\Delta (\F'''(\varphi_{\U^*} + \theta \doublewidetilde{\varphi})\doublewidetilde{\varphi}^2)},\mathcal{B}^{-1}(\varrho - \overline{\varrho}))=|\Omega|\overline{\Delta (\F'''(\varphi_{\U^*} + \theta \doublewidetilde{\varphi})\doublewidetilde{\varphi}^2)}\overline{\mathcal{B}^{-1}(\varrho - \overline{\varrho})}$ and $|\overline{f}|\leq \frac{1}{|\Omega|^{1/2}}\|f\|$. 	It should be noted that $\overline{\doublewidetilde{\u} \cdot \nabla \doublewidetilde{\varphi}}=0$, since an integration by parts, $\doublewidetilde{\u}\big|_{\partial\Omega}=0$ and the divergence free condition of $\doublewidetilde{\u}$  yields 
	\begin{align}\label{4.67}
	\overline{\doublewidetilde{\u} \cdot \nabla \doublewidetilde{\varphi}}&=\frac{1}{|\Omega|}\sum_{i=1}^2\int_{\Omega}\doublewidetilde{\u}_i(x)\frac{\partial \doublewidetilde{\varphi}(x)}{\partial x_i}\d x\nonumber\\&=\frac{1}{|\Omega|}\sum_{i=1}^2\left[\int_{\partial\Omega}\doublewidetilde{\varphi}(x)\doublewidetilde{\u}_i(x)\n_i(x)\d x-\int_{\Omega}\frac{\partial\doublewidetilde{\u}_i(x)}{\partial x_i}\doublewidetilde{\varphi}(x)\d x\right]=0.
	\end{align}
	We estimate the  first term in the right hand side of \eqref{4.18} using an integration by parts and H\"older's inequality as
	\begin{align}\label{4.54}
	|(\doublewidetilde{\u} \cdot \nabla \doublewidetilde{\varphi},\mathcal{B}^{-1}(\varrho - \overline{\varrho}))| = |(\doublewidetilde{\u} \cdot \nabla \mathcal{B}^{-1}(\varrho - \overline{\varrho}), \doublewidetilde{\varphi})| \leq \| \doublewidetilde{\u}\|_{\mathbb{L}^4}  \|\mathcal{B}^{-1/2}(\varrho - \overline{\varrho})\|\|\doublewidetilde{\varphi}\|_{\mathrm{L}^4}.
	\end{align}
	Since, \eqref{4.67} is true, from \eqref{bes1}, it is immediate that  $(\doublewidetilde{\u} \cdot \nabla \doublewidetilde{\varphi},\mathcal{B}^{-1}(\varrho - \overline{\varrho})) =(\mathcal{B}^{-1/2} (\doublewidetilde{\u} \cdot \nabla \doublewidetilde{\varphi}),\mathcal{B}^{-1/2}(\varrho - \overline{\varrho}))$. Thus, we have 
	\begin{align}\label{4.72a}
	\|\mathcal{B}^{-1/2} (\doublewidetilde{\u} \cdot \nabla \doublewidetilde{\varphi})\|=\sup_{\|\mathcal{B}^{-1/2}(\varrho - \overline{\varrho})\|=1}|(\mathcal{B}^{-1/2} (\doublewidetilde{\u} \cdot \nabla \doublewidetilde{\varphi}),\mathcal{B}^{-1/2}(\varrho - \overline{\varrho}))|\leq  \| \doublewidetilde{\u}\|_{\mathbb{L}^4}  \|\doublewidetilde{\varphi}\|_{\mathrm{L}^4}.
	\end{align}
	Also, using  \eqref{bes1}, we get $( l(x,t)-\overline{l(x,t)},\mathcal{B}^{-1}(\varrho - \overline{\varrho}) ) =(\mathcal{B}^{-1} (l(x,t)-\overline{l(x,t)}),\varrho - \overline{\varrho} ) $ and 
	\begin{align*}
	&\|\mathcal{B}^{-1} (l(x,t)-\overline{l(x,t)})\|=\sup_{\|\varrho - \overline{\varrho}\|=1}|(\mathcal{B}^{-1} (l(x,t)-\overline{l(x,t)}),\varrho - \overline{\varrho} )|\nonumber\\&\leq \sup_{\|\varrho - \overline{\varrho}\|=1}\bigg[ \| \doublewidetilde{\u}\|_{\mathbb{L}^4} \|\doublewidetilde{\varphi}\|_{\mathrm{L}^4} \|\mathcal{B}^{-1/2}(\varrho - \overline{\varrho})\|+|(\F'''(\varphi_{\U^*} + \theta \doublewidetilde{\varphi})\doublewidetilde{\varphi}^2,\varrho - \overline{\varrho})|\nonumber\\&\qquad+ \|\Delta (\F'''(\varphi_{\U^*} + \theta \doublewidetilde{\varphi})\doublewidetilde{\varphi}^2)\|\|\mathcal{B}^{-1}(\varrho - \overline{\varrho})\|\bigg]\nonumber\\&\leq  \| \doublewidetilde{\u}\|_{\mathbb{L}^4} \|\doublewidetilde{\varphi}\|_{\mathrm{L}^4} +\|\F'''(\varphi_{\U^*} + \theta \doublewidetilde{\varphi})\doublewidetilde{\varphi}^2\|+ \|\Delta (\F'''(\varphi_{\U^*} + \theta \doublewidetilde{\varphi})\doublewidetilde{\varphi}^2)\|,
	\end{align*}
	where we used \eqref{bes}. Using \eqref{4.72a}, we estimate $\|\doublewidetilde{\u} \cdot \nabla \doublewidetilde{\varphi}\|_{\mathrm{L}^2(0, T; \mathrm{V}^{'})}$  as 
	\begin{align*}
	&\int_0^T\|\mathcal{B}^{-1/2}(\doublewidetilde{\u} \cdot \nabla \doublewidetilde{\varphi}(t))\|^2\d t \leq C \int_0^T\|\doublewidetilde{\u}(t)\|_{\mathbb{L}^4}^2 \|\doublewidetilde{\varphi}(t)\|_{\mathrm{L}^4}^2 \d t \\
	&\leq 2C\left(\int_0^T\|\doublewidetilde{\u}(t)\|^2\|\nabla\doublewidetilde{\u}(t)\|^2\d t\right)^{1/2}\left(\int_0^T\|\doublewidetilde{\varphi}(t)\|^2\|\nabla\doublewidetilde{\varphi}(t)\|^2\d t\right)^{1/2}\\&\leq C \sup_{t\in[0,T]}\|\doublewidetilde{\u}(t)\|^2\left(\int_0^T\|\nabla\doublewidetilde{\u}(t)\|^2\d t\right)+C\sup_{t\in[0,T]}\|\doublewidetilde{\varphi}(t)\|^2\left(\int_0^T\|\nabla\doublewidetilde{\varphi}(t)\|^2\d t\right),
	\end{align*}
	where we used the Ladyzhenskaya, H\"older, Young's inequalities. 
	Hence, we have  
	\begin{align}\label{est}
	\|\doublewidetilde{\u} \cdot \nabla \doublewidetilde{\varphi} \|_{\mathrm{L}^{2}(0,T;\mathrm{V}')} &\leq C\left( \|\doublewidetilde{\u}\|_{\mathrm{L}^{\infty}(0,T;\G_{\text{div}})}\|\doublewidetilde{\u}\|_{\mathrm{L}^2(0,T;\V_{\text{div}})}+\|\doublewidetilde{\varphi} \|_{\mathrm{L}^{\infty}(0,T;\mathrm{H})}\|\doublewidetilde{\varphi}\|_{\mathrm{L}^2(0,T;\mathrm{V})}\right).
	\end{align}	
	In order to estimate the second term in \eqref{4.18}, for simplicity, we take $\F(s)=(s^2-1)^2$ and  estimate $\frac{1}{2}(\Delta (\F'''(\varphi_{\U^*} + \theta \doublewidetilde{\varphi})\doublewidetilde{\varphi}^2),\mathcal{B}^{-1}(\varrho - \overline{\varrho}))$ using \eqref{bes}, H\"older's and Gagliardo-Nirenberg inequalities as:
	\begin{align}\label{4.72}
	\frac{1}{2}|(\Delta(\F'''(\varphi_{\U^*} + \theta \doublewidetilde{\varphi})\doublewidetilde{\varphi}^2),\mathcal{B}^{-1}(\varrho-\overline{\varrho}))|&=  \frac{1}{2}|(\F'''(\varphi_{\U^*} + \theta \doublewidetilde{\varphi})\doublewidetilde{\varphi}^2,\varrho-\overline{\varrho})|\nonumber \\  &\leq \frac{1}{2}\|(\F'''(\varphi_{\U^*} + \theta \doublewidetilde{\varphi})\doublewidetilde{\varphi}^2\|\|\varrho-\overline{\varrho}\|\nonumber\\&\leq 12\|(\varphi_{\U^*} + \theta \doublewidetilde{\varphi})\doublewidetilde{\varphi}^2\|\|\varrho-\overline{\varrho}\|\no\\&\leq 12\left(\|\varphi_{\U^*} \doublewidetilde{\varphi}^2\|+\|\doublewidetilde{\varphi}^3\|\right)\|\varrho-\overline{\varrho}\|\nonumber\\&\leq 12\left(\|\varphi_{\U^*}\|_{\mathrm{L}^{\infty}}\|\doublewidetilde{\varphi}\|_{\mathrm{L}^4}^2+\|\doublewidetilde{\varphi}\|_{\mathrm{L}^6}^3\right)\|\varrho-\overline{\varrho}\|\nonumber\\&\leq C\left(\|\varphi_{\U^*}\|_{\mathrm{L}^{\infty}}\|\doublewidetilde{\varphi}\|\|\nabla\doublewidetilde{\varphi}\|+\|\nabla\doublewidetilde{\varphi}\|^2\|\doublewidetilde{\varphi}\|\right)\|\varrho-\overline{\varrho}\|.
	\end{align}
	Now from \eqref{4.72}, we infer that 
	\begin{align*}
	\frac{1}{2}\|\F'''(\varphi_{\U^*} + \theta \doublewidetilde{\varphi})\doublewidetilde{\varphi}^2\|\leq C\left(\|\varphi_{\U^*}\|_{\mathrm{L}^{\infty}}\|\doublewidetilde{\varphi}\|\|\nabla\doublewidetilde{\varphi}\|+\|\nabla\doublewidetilde{\varphi}\|^2\|\doublewidetilde{\varphi}\|\right).
	\end{align*}
	Thus, combining the above two estimates, we have 
	\begin{align}\label{55}
	&\left\|\frac{1}{2}(\F'''(\varphi_{\U^*} + \theta \doublewidetilde{\varphi})\doublewidetilde{\varphi}^2)\right\|_{\mathrm{L}^2(0,T;\mathrm{H})}\nonumber\\&\leq C\left(\|\varphi_{\U^*}\|_{\mathrm{L}^{\infty}((0,T)\times\Omega)}\|\doublewidetilde{\varphi}\|_{\mathrm{L}^{\infty}(0,T;\mathrm{H})}\|\doublewidetilde{\varphi}\|_{\mathrm{L}^2(0,T;\mathrm{V})}+\|\doublewidetilde{\varphi}\|_{\mathrm{L}^{\infty}(0,T;\mathrm{H})}\|\doublewidetilde{\varphi}\|_{\mathrm{L}^4(0,T;\mathrm{V})}^2\right).
	\end{align}
	Using the fact that  $\F(\cdot)$ has polynomial growth and  Gagliardo-Nirenberg inequality, the general case of polynomial of order $p$ can be obtained in a similar way as 
	\begin{align}\label{4.74}
	&\left\|\frac{1}{2}|(\F'''(\varphi_{\U^*} + \theta \doublewidetilde{\varphi})\doublewidetilde{\varphi}^2)\right\|_{\mathrm{L}^2(0,T;\mathrm{H})}\no\\&\quad\leq C\left(\|\varphi_{\U^*}\|_{\mathrm{L}^{\infty}((0,T)\times\Omega)}\right)\|\doublewidetilde{\varphi}\|_{\mathrm{L}^{\infty}(0,T;\mathrm{H})}\sum_{k=1}^{p-2}\|\doublewidetilde{\varphi}\|_{\mathrm{L}^{2k}(0,T;\mathrm{V})}^{k},
	\end{align}
	for $p\geq 3$. Now, we estimate the final term in \eqref{4.18} as 
	\begin{align}\label{4.77}
	\frac{1}{2}(\overline{\Delta (\F'''(\varphi_{\U^*} + \theta \doublewidetilde{\varphi})\doublewidetilde{\varphi}^2)},\mathcal{B}^{-1}(\varrho - \overline{\varrho}))&\leq\frac{1}{2} \|\Delta (\F'''(\varphi_{\U^*} + \theta \doublewidetilde{\varphi})\doublewidetilde{\varphi}^2)\|\|\mathcal{B}^{-1}(\varrho - \overline{\varrho})\|\nonumber\\&\leq \frac{1}{2} \|\Delta (\F'''(\varphi_{\U^*} + \theta \doublewidetilde{\varphi})\doublewidetilde{\varphi}^2)\|\|\varrho - \overline{\varrho}\|.
	\end{align}
	As before, for simplicity, we take $\F(s)=(s^2-1)^2$ and estimate the first term in the right hand side of \eqref{4.77} as 
	\begin{align*}
	&\frac{1}{2}\|\Delta (\F'''(\varphi_{\U^*} + \theta \doublewidetilde{\varphi})\doublewidetilde{\varphi}^2)\|\leq 12\|\Delta((\varphi_{\U^*} + \theta \doublewidetilde{\varphi})\doublewidetilde{\varphi}^2)\|\nonumber\\&\leq 12\Big((2\|\varphi_{\U^*}\|_{\mathrm{L}^{\infty}}\|\doublewidetilde{\varphi}\|_{\mathrm{L}^{\infty}}+3\|\doublewidetilde{\varphi}\|_{\mathrm{L}^{\infty}}^2)\|\Delta\doublewidetilde{\varphi}\| +\|\doublewidetilde{\varphi}\|_{\mathrm{L}^{\infty}}^2\|\Delta\varphi_{\U^*}\|\nonumber\\&\quad +(2\|\varphi_{\U^*}\|_{\mathrm{L}^{\infty}}+6\|\doublewidetilde{\varphi}\|_{\mathrm{L}^{\infty}})\|\nabla\doublewidetilde{\varphi}\|^2+4\|\nabla\varphi_{\U^*}\|\|\varphi\|_{\mathrm{L}^{\infty}}\|\nabla\doublewidetilde{\varphi}\|\Big).
	\end{align*}
	Thus, we have, 
	\begin{align}\label{4.78}
	&\left\|\frac{1}{2}\Delta(\F'''(\varphi_{\U^*} + \theta \doublewidetilde{\varphi})\doublewidetilde{\varphi}^2)\right\|_{\mathrm{L}^2(0,T;\mathrm{H})}\nonumber\\&\leq C\bigg[\Big(\|\varphi_{\U^*}\|_{\mathrm{L}^{\infty}((0,T)\times\Omega)}\|\doublewidetilde{\varphi}\|_{\mathrm{L}^{\infty}((0,T)\times\Omega)}+\|\doublewidetilde{\varphi}\|_{\mathrm{L}^{\infty}((0,T)\times\Omega)}^2\Big)\|\doublewidetilde{\varphi}\|_{\mathrm{L}^{2}(0,T;\mathrm{H}^2)}\nonumber\\&\quad+\Big(\|\varphi_{\U^*}\|_{\mathrm{L}^{\infty}((0,T)\times\Omega)}+\|\doublewidetilde{\varphi}\|_{\mathrm{L}^{\infty}((0,T)\times\Omega)}\Big)\|\doublewidetilde{\varphi}\|_{\mathrm{L}^4(0,T;\mathrm{V})}^2\nonumber\\&\quad +\|\varphi_{\U^*}\|_{\mathrm{L}^4(0,T;\mathrm{V})}\|\doublewidetilde{\varphi}\|_{\mathrm{L}^{\infty}((0,T)\times\Omega)}\|\doublewidetilde{\varphi}\|_{\mathrm{L}^4(0,T;\mathrm{V})} +\|\varphi_{\U^*}\|_{\mathrm{L}^{2}(0,T;\mathrm{H}^2)}\|\doublewidetilde{\varphi}\|_{\mathrm{L}^{\infty}((0,T)\times\Omega)}^2\nonumber\\&\leq C\lambda(\lambda+\lambda^2).
	\end{align}

	In gerneral, since $\F(\cdot)$ has a polynomial growth, a similar kind of estimate holds true. Combining \eqref{est}- \eqref{4.78}, we get \eqref{l}. Also, the validity of (\ref{y}) and (\ref{r}) is immediate using Theorems \ref{unique}, \ref{weakstrong} and \ref{thm2.16}. Hence by taking $\lambda \rightarrow 0$, we finally arrive at (\ref{conv1}) and (\ref{conv2}). 

It completes the proof.
\end{proof}

\begin{remark}
	If $(\u_0, \varphi_0)$ satisfies (\ref{initial}) and $\F(\cdot)$ satisfies \eqref{fes}, the mapping $\U \mapsto (\u_{\U},\varphi_{\U})$  from $\mathscr{U}_{\text{ad}}$ into  $\left(\mathrm{C}([0,T];\G_{\text{div}})\cap\mathrm{L}^{2}(0,T;\V_{\text{div}})\right) \times \left(\mathrm{C}([0,T];\mathrm{H})\cap\mathrm{L}^{2}(0,T;\mathrm{V})\right)$ is G\^ateaux differentiable. In this case, \eqref{conv2} is replaced by 
	\begin{align*}
	\lim_{\lambda\to0}\frac{\|\varphi_{\U^*+\lambda \U} - \varphi_{\U^*}-\lambda\psi\|_{\mathrm{L}^2(0,T;\mathrm{V})}}{|\lambda|}=0.
	\end{align*} This strong G\^ateaux differentiability of the mapping $\U \mapsto (\u_{\U},\varphi_{\U})$ is similar to that of Theorem 3.4, \cite{ControlCHNS}.  In order to establish this, we take the inner product of $\widetilde{\mu}_{\varrho} $ with $l(x,t)$ in \eqref{4.58} to obtain  
	\begin{align*}
	(l(x,t),\widetilde{\mu}_{\varrho})=- (\doublewidetilde{\u} \cdot \nabla\doublewidetilde{\varphi},\widetilde{\mu}_{\varrho})+\frac{1}{2}( \Delta (\F'''(\varphi_{\U^*} + \theta \doublewidetilde{\varphi})\doublewidetilde{\varphi}^2),\widetilde{\mu}_{\varrho}).
	\end{align*}
	Now an integration by parts and H\"older's inequality yield
	\begin{align*}
	- (\doublewidetilde{\u} \cdot \nabla\doublewidetilde{\varphi},\widetilde{\mu}_{\varrho})=(\doublewidetilde{\u}\cdot\nabla\widetilde{\mu}_{\varrho},\doublewidetilde{\varphi})\leq \|\doublewidetilde{\u}\|_{\mathbb{L}^4}\|\nabla\widetilde{\mu}_{\varrho}\|\|\doublewidetilde{\varphi}\|_{\L^4}\leq \|\doublewidetilde{\u}\|_{\mathbb{L}^4}\|\doublewidetilde{\varphi}\|_{\L^4}\|\widetilde{\mu}_{\varrho}\|_{\mathrm{V}}.
	\end{align*}
	A calculation similar to \eqref{4.78} gives
	\begin{align*}
	\frac{1}{2}|( \Delta (\F'''(\varphi_{\U^*} + \theta \doublewidetilde{\varphi})\doublewidetilde{\varphi}^2),\widetilde{\mu}_{\varrho})|&\leq \| \Delta (\F'''(\varphi_{\U^*} + \theta \doublewidetilde{\varphi})\doublewidetilde{\varphi}^2)\|\|\widetilde{\mu}_{\varrho}\|\nonumber\\&\leq C\left( \|\doublewidetilde{\varphi} \|\sum_{k=1}^{p-2}\|\doublewidetilde{\varphi}\|_{{\mathrm{V}}}^{k}+\|\doublewidetilde{\varphi} \|_{\mathrm{H}^2}\sum_{k=1}^{p-2}\|\doublewidetilde{\varphi}\|_{\mathrm{L}^{\infty}}^{k}\right)\|\widetilde{\mu}_{\varrho}\|_{\mathrm{V}},
	\end{align*}
	for $p\geq 3$. Thus, we have 
	\begin{align*}
	\|l(x,t)\|_{\mathrm{V}'}\leq C\left(\|\doublewidetilde{\u}\|_{\mathbb{L}^4}\|\doublewidetilde{\varphi}\|_{\L^4}+ \|\doublewidetilde{\varphi} \|\sum_{k=1}^{p-2}\|\doublewidetilde{\varphi}\|_{{\mathrm{V}}}^{k}+\|\doublewidetilde{\varphi} \|_{\mathrm{H}^2}\sum_{k=1}^{p-2}\|\doublewidetilde{\varphi}\|_{\mathrm{L}^{\infty}}^{k}\right),
	\end{align*}
	and 
	\begin{align*}
	\|\widetilde{\mu}_{\varrho}\|_{\mathrm{L}^{2}(0,T;\mathrm{V})} & \leq C	\|l(x,t)\|_{\mathrm{L}^{2}(0,T;\mathrm{V}')} \\ &\leq C\left( \|\doublewidetilde{\u}\|_{\mathrm{L}^{\infty}(0,T;\G_{\text{div}})}\|\doublewidetilde{\u}\|_{\mathrm{L}^2(0,T;\V_{\text{div}})}+\|\doublewidetilde{\varphi} \|_{\mathrm{L}^{\infty}(0,T;\mathrm{H})}\sum_{k=1}^{p-2}\|\doublewidetilde{\varphi}\|_{\mathrm{L}^{2k}(0,T;\mathrm{V})}^{k}\right.\nonumber\\&\qquad \left.+\|\doublewidetilde{\varphi} \|_{\mathrm{L}^{2}(0,T;\mathrm{H}^2)}\sum_{k=1}^{p-2}\|\doublewidetilde{\varphi}\|_{\mathrm{L}^{\infty}((0,T)\times\Omega)}^{k}\right)\leq C\sum_{k=1}^{p-2}\lambda \lambda^k.
	\end{align*}
	It can be shown that 
	\begin{align*}
	\|\varrho\|_{\mathrm{L}^{2}(0,T;\mathrm{V})}\leq \|\widetilde{\mu}_{\varrho}\|_{\mathrm{L}^{2}(0,T;\mathrm{V})} \leq  C\sum_{k=1}^{p-2}\lambda \lambda^k,
	\end{align*}
	which gives G\^ateaux differentiability of  $\varphi_{\U}$  in the strong sense.
\end{remark}

\section{Second order necessary and sufficient optimality condition}\label{se5}
	In this section we derive the second order necessary and sufficient optimality condition for the optimal control problem \eqref{control problem}. 
	
	Let $(\widehat{\u},\widehat{\varphi},\widehat{\U})$ be an arbitrary feasible triplet for the optimal control problem \eqref{control problem}, we set
	\begin{align}\label{def Q}
	\mathcal{Q}_{\widehat{\u},\widehat{\varphi},\widehat{\U}}= \{(\u,\varphi,\U) \in \mathcal{A}_{\text{ad}} \} - \{(\widehat{\u},\widehat{\varphi},\widehat{\U})\},
	\end{align}
	which denotes the differences of all feasible triplets for the problem \eqref{control problem} corresponding to $(\widehat{\u},\widehat{\varphi},\widehat{\U})$.
	\begin{theorem}[Necessary condition]\label{necessary}
		Let  $(\u^*,\varphi^*,\U^*)$ be an optimal triplet for the problem \eqref{control problem} and the adjoint variables $(\p,\eta)$ satisfies the adjoint system \eqref{adj}. Then for any $(\u,\varphi,\U) \in \mathcal{Q}_{\u^*,\varphi^*,\U^*}$, there exist $0\leq \theta\leq1$ such that
		\begin{align}\label{3.14}
		& \int_0^T \|\u(t)\|^2 \d t+  \int_0^T \|\varphi(t)\|^2 \d t+ \int_0^T\|\U(t)\|^2\d t +\int_0^T \big((\F'''(\varphi^*+ \theta \varphi)\varphi^2,\Delta\eta\big)  \nonumber \\ 
		&\quad-2\int_0^T ( (\u \cdot \nabla )\u+ \frac{\nabla a}{2} \varphi^2 +(\J\ast \varphi) \nabla \varphi - \U, \p ) \d t - 2\int_0^T (\u \cdot \nabla \varphi,\eta ) \d t  \geq 0.
		\end{align}
	\end{theorem}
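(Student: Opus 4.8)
The plan is to test a finite perturbation of the optimal state against the adjoint pair and then feed the resulting identity into the optimality inequality, using the first order condition $\U^*=-\p$ already obtained in Theorem \ref{main}. Fix $(\u,\varphi,\U)\in\mathcal{Q}_{\u^*,\varphi^*,\U^*}$; by \eqref{def Q} the triple $(\u^*+\u,\varphi^*+\varphi,\U^*+\U)$ lies in $\mathscr{A}_{\text{ad}}$, so $(\u^*+\u,\varphi^*+\varphi)$ and $(\u^*,\varphi^*)$ are the unique strong solutions of \eqref{nonlin phi}--\eqref{initial conditions} with the \emph{same} initial data \eqref{initial} and with controls $\U^*+\U$ and $\U^*$ respectively; in particular $\u(0)=\mathbf{0}$ and $\varphi(0)=0$. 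Optimality of $(\u^*,\varphi^*,\U^*)$ gives $\mathcal{J}(\u^*+\u,\varphi^*+\varphi,\U^*+\U)\geq\mathcal{J}(\u^*,\varphi^*,\U^*)$, and expanding the three quadratic terms in \eqref{cost} yields the basic inequality
\begin{align*}
&\int_0^T(\u,\u^*-\u_d)\d t+\int_0^T(\varphi,\varphi^*-\varphi_d)\d t+\int_0^T(\U,\U^*)\d t\\
&\qquad\qquad+\frac12\int_0^T\left(\|\u\|^2+\|\varphi\|^2+\|\U\|^2\right)\d t\ \geq\ 0.
\end{align*}

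Next I would write down the system satisfied by $(\u,\varphi)$ by subtracting the two nonlinear systems, using the identity \eqref{mh} for the Korteweg force and Taylor's formula $\F'(\varphi^*+\varphi)-\F'(\varphi^*)=\F''(\varphi^*)\varphi+\tfrac12\F'''(\varphi^*+\theta\varphi)\varphi^2$ for some measurable $\theta=\theta(x,t)\in[0,1]$. Arguing exactly as in \eqref{4.18a}--\eqref{4.44}, one checks that $(\u,\varphi)$ solves the system \eqref{lin w}--\eqref{lin initial conditions} linearized around $(\u^*,\varphi^*)$ (cf.\ \eqref{4.17a}), driven by the control $\U$, \emph{plus} the quadratic corrections
\[
\mathbf{k}:=-(\u\cdot\nabla)\u-\frac{\nabla a}{2}\varphi^2-(\J\ast\varphi)\nabla\varphi
\]
on the right-hand side of the momentum equation (modulo a redefined pressure, which is a gradient and is therefore annihilated by the divergence-free $\p$), and $-\u\cdot\nabla\varphi+\Delta\big(\tfrac12\F'''(\varphi^*+\theta\varphi)\varphi^2\big)$ on the right-hand side of the $\varphi$-equation.

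The core step is to test this system against the adjoint solution $(\p,\eta)$ produced by Theorem \ref{adjoint} (taken with $(\u,\varphi)=(\u^*,\varphi^*)$ and $\p_T=\mathbf{0}$, $\eta_T=0$). Integrating by parts in time using $\p(T)=\mathbf{0}$, $\eta(T)=0$, $\u(0)=\mathbf{0}$, $\varphi(0)=0$, integrating by parts in space using the homogeneous boundary conditions on $\u,\p,\eta$ and the divergence-free conditions on $\u,\p$ (so that pressure gradients drop out) together with the Neumann conditions to transfer $\Delta$ from $\eta$ onto the potential terms, and finally using that $(\p,\eta)$ solves \eqref{adj}, one obtains
\begin{align*}
&\int_0^T(\u,\u^*-\u_d)\d t+\int_0^T(\varphi,\varphi^*-\varphi_d)\d t\\
&\quad=\int_0^T(\U,\p)\d t+\int_0^T(\mathbf{k},\p)\d t-\int_0^T(\u\cdot\nabla\varphi,\eta)\d t+\frac12\int_0^T\big(\F'''(\varphi^*+\theta\varphi)\varphi^2,\Delta\eta\big)\d t.
\end{align*}
Inserting this into the basic inequality, cancelling $\int_0^T(\U,\p)\d t+\int_0^T(\U,\U^*)\d t$ by means of the pointwise first order condition $\U^*(t)=-\p(t)$ (equation \eqref{nec}), multiplying by $2$ and recalling the definition of $\mathbf{k}$, one arrives at \eqref{3.14}.

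I expect the main obstacle to be the rigorous justification of these integrations by parts, i.e.\ showing that every pairing in the displayed identity is finite and that all time-endpoint and boundary contributions genuinely vanish; this forces one to work with the weak formulations of the difference system and of the adjoint system \eqref{adj} rather than with the equations pointwise. The ingredients are the strong-solution regularity \eqref{0.22}--\eqref{0.24} of both $(\u^*,\varphi^*)$ and $(\u^*+\u,\varphi^*+\varphi)$, the adjoint regularity \eqref{space} (so that in particular $\Delta\eta\in\mathrm{L}^2(0,T;\mathrm{H})$), the continuous-dependence and higher-regularity bounds of Theorems \ref{unique} and \ref{thm2.16} together with Remark \ref{imp}, the polynomial growth of $\F$, and the Ladyzhenskaya and Gagliardo--Nirenberg inequalities. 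The genuinely delicate terms are $\int_0^T(\mathbf{k},\p)\d t$, $\int_0^T(\u\cdot\nabla\varphi,\eta)\d t$ and $\int_0^T(\F'''(\varphi^*+\theta\varphi)\varphi^2,\Delta\eta)\d t$, and they are controlled exactly as the estimates \eqref{k}--\eqref{l} and \eqref{4.58}--\eqref{4.78} in the proof of Lemma \ref{lem3.8}.
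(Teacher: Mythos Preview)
Your proposal is correct and follows essentially the same route as the paper's proof: derive the system satisfied by the difference $(\u,\varphi)$ using \eqref{mh} and the Taylor expansion of $\F'$, test it against the adjoint pair $(\p,\eta)$, integrate by parts to transfer the linear part onto the adjoint variables and invoke \eqref{adj} together with the first order condition $\U^*=-\p$, and then substitute the resulting identity into the expanded optimality inequality $\mathcal{J}(\u^*+\u,\varphi^*+\varphi,\U^*+\U)\ge\mathcal{J}(\u^*,\varphi^*,\U^*)$. The only difference is cosmetic (you state the optimality inequality first and then the testing identity, whereas the paper does the reverse), and your closing discussion of the regularity needed to justify the integrations by parts is more careful than what the paper actually writes.
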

	\begin{proof}
		For any $(\u,\varphi,\U)\in \mathcal{Q}_{\u^*,\varphi^*,\U^*},$ by  \eqref{def Q} there exist $(\z,\xi,\W) \in \mathcal{A}_{\text{ad}}$ such that $(\u,\varphi,\U)= (\z - \u^*,\xi - \varphi^*,\W - \U^*)$. So from \eqref{nonlin phi}-\eqref{initial conditions}, we can derive that $(\u,\varphi)$ satisfies the following system:
		\begin{eqnarray}\label{5.29}
		\left\{
		\begin{aligned}
		{\u}_t - \nu \Delta \u + (\u \cdot \nabla )\u^* &+ (\u^* \cdot \nabla )\u +(\J\ast \varphi) \nabla \varphi^*+ (\J\ast \varphi^*) \nabla \varphi+\nabla a \varphi \varphi^* + \nabla \widetilde{\uppi}_{\u} \\&= -(\u \cdot \nabla )\u- \frac{\nabla a}{2} \varphi^2 -(\J\ast \varphi) \nabla \varphi + \U, \ \text{ in } \ \Omega\times(0,T),\\
		{\varphi}_t + \u \cdot \nabla \varphi + \u^* \cdot\nabla \varphi + \u \cdot \nabla \varphi^* &= \Delta \widetilde{\mu}, \ \text{ in } \ \Omega\times(0,T), \\ 
		\widetilde{\mu} &= a \varphi - \J\ast \varphi +\F'(\varphi^*+  \varphi) - \F'(\varphi^*) ,\ \text{ in } \ \Omega\times(0,T),  \\
		\text{div }\u &= 0, \ \text{ in } \ \Omega\times(0,T), \\
		\frac{\partial \widetilde{\mu}}{\partial\mathbf{n}} &= 0, \ \u=0,  \ \text{on } \ \partial \Omega \times (0,T),\\
		\u(0) &= 0, \ \varphi(0) = 0, \ \text{ in } \ \Omega.
		\end{aligned}
		\right.
		\end{eqnarray}
		From now on we use the Taylor series expansion of $\mathrm{F}'$ around $\varphi^*$. There exists a $\theta$;  $0<\theta<1$ such that 
		\begin{align}\label{3.15}
		\F'(\varphi^*+  \varphi) - \F'(\varphi^*) =\F''(\varphi^*)  \varphi + \F'''(\varphi^*+ \theta \varphi)\frac{\varphi^2}{2}.
		\end{align}
		Taking inner product of \eqref{5.29} with $(\p,\eta)$, integrating over $[0,T]$ and then adding, we get
		\begin{align}
		&\int_0^T ({\u}_t - \nu \Delta \u + (\u \cdot \nabla )\u^* + (\u^* \cdot \nabla )\u +(\J\ast \varphi) \nabla \varphi^*+ (\J\ast \varphi^*) \nabla \varphi+\nabla a \varphi \varphi^* + \nabla \widetilde{\uppi}_{\u},\p ) \d t \nonumber\\  & \quad+ \int_0^T ( (\u \cdot \nabla )\u+ \frac{\nabla a}{2} \varphi^2 +(\J\ast \varphi) \nabla \varphi - \U, \p ) \d t\nonumber\\&\quad+ \int_0^T ( {\varphi}_t + \u \cdot \nabla \varphi + \u^* \cdot\nabla \varphi + \u \cdot \nabla \varphi^* - \Delta \widetilde{\mu},\eta ) \d t =0 .
		\end{align}
		Using an integration by parts, we further get
		\begin{align}\label{3.18}
		&\int_0^T ( \u,-\p_t- \nu \Delta \p +(\p \cdot \nabla)\u^* + (\u^* \cdot \nabla)\p + \nabla \varphi^* \eta+\nabla q ) \d t\nonumber\\& + \int_0^T ( \varphi, -\eta_t  +\J \ast (\p \cdot \nabla \varphi^*) - (\nabla{\J} \ast  \varphi^*)\cdot \p + \nabla a\cdot \p \varphi^* - \u^* \cdot\nabla \eta)\d t\nonumber\\& + \int_0^T ( \varphi,  - a \Delta \eta + \J\ast \Delta \eta - \F''(\varphi^*)\Delta \eta )\d t+ \int_0^T (\u \cdot \nabla \varphi,\eta ) \d t\nonumber\\&+ \int_0^T ( (\u \cdot \nabla )\u+ \frac{\nabla a}{2} \varphi^2 +(\J\ast \varphi) \nabla \varphi - \U, \p ) \d t  -\int_0^T ((\F'''(\varphi^*+ \theta \varphi)\frac{\varphi^2}{2},\Delta\eta) \d t =0.
		\end{align}
		Since $(\u^*,\varphi^*,\U^*)$ is an optimal triplet, it satisfies the first order necessary conditions given in \eqref{nec}. This and the adjoint system \eqref{adj} implies 
		\begin{align}\label{5.32}
		& \int_0^T ( \u,\u^* -\u_d) \d t+ \int_0^T (\varphi,\varphi^*-\varphi_d ) \d t+ \int_0^T (\U,\U^*) \d t  \nonumber\\&
		\quad +\int_0^T ( (\u \cdot \nabla )\u+ \frac{\nabla a}{2} \varphi^2 +(\J\ast \varphi) \nabla \varphi , \p ) \d t + \int_0^T (\u \cdot \nabla \varphi,\eta ) \d t \nonumber \\
		& \quad-\int_0^T ((\F'''(\varphi^*+ \theta \varphi)\frac{\varphi^2}{2},\Delta\eta) \d t=0.
		\end{align}
		Since $(\u,\varphi,\U) \in \mathcal{Q}_{\u^*,\varphi^*,\U^*}$, by \eqref{def Q}, we have $(\u+\u^*,\varphi+\varphi^*,\U+\U^*)$ is a feasible triplet for the problem \eqref{control problem}. We obtain that
		\begin{align}
		&\mathcal{J}(\u+\u^*,\varphi+\varphi^*,\U+\U^*) - \mathcal{J}(\u^*,\varphi^*,\U^*) \nonumber\\&= \ \int_0^T \|(\u+\u^*-\u_d)(t)\|^2 \d t+  \int_0^T \|(\varphi+\varphi^*-\varphi_d)(t)\|^2 \d t+ \int_0^T\|(\U+\U^*)(t)\|^2\d t \nonumber \\
		&\quad  - \int_0^T \|(\u^*-\u_d)(t)\|^2 \d t-  \int_0^T \|(\varphi^*-\varphi_d)(t)\|^2 \d t- \int_0^T\|\U^*(t)\|^2\d t \geq 0 \nonumber \\
		& =\int_0^T \|\u(t)\|^2 \d t+  \int_0^T \|\varphi(t)\|^2 \d t+ \int_0^T\|\U(t)\|^2\d t \nonumber \\
		&\quad +2\int_0^T ( \u,\u^* -\u_d) \d t+2 \int_0^T (\varphi,\varphi^*-\varphi_d ) \d t+ 2\int_0^T (\U,\U^*) \d t \geq 0.
		\end{align}
		From \eqref{5.32}, it follows that
		\begin{align*}
	&	\int_0^T \|\u(t)\|^2 \d t+  \int_0^T \|\varphi(t)\|^2 \d t+ \int_0^T\|\U(t)\|^2\d t \nonumber \\
	&\quad -\int_0^T ( (\u \cdot \nabla )\u+ \frac{\nabla a}{2} \varphi^2 +(\J\ast \varphi) \nabla \varphi , \p ) \d t - \int_0^T (\u \cdot \nabla \varphi,\eta ) \d t \nonumber \\
		& \quad+\int_0^T ((\F'''(\varphi^*+ \theta \varphi)\frac{\varphi^2}{2},\Delta\eta) \d t \geq 0.
		\end{align*}
		which completes the proof.
	\end{proof}
	\begin{remark}\label{rem3.9}
		Note that we assumed Dirichlet boundary condition on $\eta$ in \eqref{adj}. One can perform integration by parts in (\ref{3.18}) and obtain  the term $(\F'''(\varphi^*+ \theta_4 \varphi)\frac{\varphi^2}{2},\Delta\eta)$, which is well-defined. 
	\end{remark}
	
	\begin{theorem}[Sufficient condition]\label{sufficient}
		Suppose that $(\u^*,\varphi^*,\U^*)$ be a feasible triplet for the problem \eqref{control problem}. Let us assume that the first order necessary condition holds true (see  \eqref{nec}), and for any $0\leq \theta\leq 1$ and $(\u,\varphi,\U) \in \mathcal{Q}_{\u^*,\varphi^*,\U^*},$ the following inequality holds:
		\begin{align}\label{3.21}
		&	\int_0^T \|\u(t)\|^2 \d t+  \int_0^T \|\varphi(t)\|^2 \d t+ \int_0^T\|\U(t)\|^2\d t \nonumber \\
	&\quad -\int_0^T ( (\u \cdot \nabla )\u+ \frac{\nabla a}{2} \varphi^2 +(\J\ast \varphi) \nabla \varphi , \p ) \d t - \int_0^T (\u \cdot \nabla \varphi,\eta ) \d t \nonumber \\
		& \quad+\int_0^T ((\F'''(\varphi^*+ \theta \varphi)\frac{\varphi^2}{2},\Delta\eta) \d t \geq 0.
		\end{align}
		Then $(\u^*,\varphi^*,\U^*)$ is an optimal triplet for the problem \eqref{control problem}.
	\end{theorem}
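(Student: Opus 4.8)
The plan is to run the computation of Theorem \ref{necessary} in reverse. Every manipulation there is an exact identity rather than an inequality, so once the second order inequality \eqref{3.21} is granted for all admissible directions, the same chain of computations produces a lower bound for the cost increment at $(\u^*,\varphi^*,\U^*)$. Concretely, I would fix an arbitrary feasible triplet $(\z,\xi,\W)\in\mathcal{A}_{\text{ad}}$ for \eqref{control problem} and set $(\u,\varphi,\U):=(\z-\u^*,\xi-\varphi^*,\W-\U^*)$, which lies in $\mathcal{Q}_{\u^*,\varphi^*,\U^*}$ by \eqref{def Q}. It then suffices to show $\mathcal{J}(\z,\xi,\W)\geq\mathcal{J}(\u^*,\varphi^*,\U^*)$.

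First, exactly as in the derivation of \eqref{5.29}, the pair $(\u,\varphi)$ solves the system \eqref{5.29}; this uses only that $(\z,\xi)$ and $(\u^*,\varphi^*)$ are both strong solutions of \eqref{nonlin phi}--\eqref{initial conditions}, not any optimality. Writing Taylor's formula \eqref{3.15} for $\F'(\varphi^*+\varphi)-\F'(\varphi^*)$ introduces a parameter $\theta=\theta(\z,\xi,\W)\in(0,1)$. Next I would test \eqref{5.29} against the adjoint pair $(\p,\eta)$ supplied by Theorem \ref{adjoint} (with terminal data $0$, associated to $(\u^*,\varphi^*)$), integrate over $(0,T)$ and integrate by parts, obtaining \eqref{3.18}. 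All of these pairings are legitimate because $(\u^*,\varphi^*)$ enjoys the strong regularity of \eqref{0.22} and $(\p,\eta)$ has the regularity \eqref{space}; in particular $\eta\in\mathrm{L}^2(0,T;\mathrm{H}^2(\Omega))$ while $\varphi=\xi-\varphi^*\in\mathrm{L}^\infty((0,T)\times\Omega)$, so the term $(\F'''(\varphi^*+\theta\varphi)\varphi^2,\Delta\eta)$ is well defined (cf.\ Remark \ref{rem3.9}) thanks to \eqref{fes}. Inserting the adjoint equations \eqref{adj} to collapse the linear terms and using the first order condition \eqref{nec}, i.e.\ $\p=-\U^*$, to treat the control term, \eqref{3.18} reduces to the identity \eqref{5.32}.

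Finally I would expand the (quadratic) cost increment, which gives
\[
\mathcal{J}(\z,\xi,\W)-\mathcal{J}(\u^*,\varphi^*,\U^*)=\frac12\int_0^T\big(\|\u\|^2+\|\varphi\|^2+\|\U\|^2\big)\d t+\int_0^T\big((\u,\u^*-\u_d)+(\varphi,\varphi^*-\varphi_d)+(\U,\U^*)\big)\d t,
\]
and then substitute the identity \eqref{5.32} for the last integral. The outcome is that this increment equals a positive multiple of the left-hand side of \eqref{3.21}, evaluated at the present direction $(\u,\varphi,\U)$ and at the present $\theta\in(0,1)$. Since, by hypothesis, \eqref{3.21} holds for every $0\leq\theta\leq1$ and every element of $\mathcal{Q}_{\u^*,\varphi^*,\U^*}$, it holds for this choice, whence $\mathcal{J}(\z,\xi,\W)\geq\mathcal{J}(\u^*,\varphi^*,\U^*)$. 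As $(\z,\xi,\W)$ was an arbitrary feasible triplet, $(\u^*,\varphi^*,\U^*)$ minimizes $\mathcal{J}$ over the admissible class, i.e.\ it is an optimal triplet for \eqref{control problem}.

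I do not anticipate a serious analytic obstacle: the hard ingredients --- existence, uniqueness and strong regularity of the state (Theorem \ref{strongsol}) and adjoint (Theorem \ref{adjoint}) systems, together with the integration-by-parts passage \eqref{5.29}$\to$\eqref{3.18}$\to$\eqref{5.32} --- are already in place from the proof of Theorem \ref{necessary}, and the present proof merely traverses that chain backwards. The only points needing care are bookkeeping ones: checking that the Taylor parameter produced for each feasible triplet indeed lies in the admissible range $[0,1]$ so that hypothesis \eqref{3.21} applies verbatim, and correctly tracking the harmless positive constants (the factors of $\tfrac12$ from the quadratic cost and from $2\cdot\tfrac{\varphi^2}{2}=\varphi^2$ in the Taylor remainder) that distinguish the form \eqref{3.21} from the cost increment.
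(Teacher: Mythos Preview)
Your proposal is correct and follows essentially the same route as the paper: take an arbitrary feasible $(\z,\xi,\W)$, form the difference system (the paper's \eqref{5.34}, which is just \eqref{5.29} in the new variables), test against the adjoint pair, integrate by parts, insert the adjoint equations together with the first order condition $\p=-\U^*$ to obtain the analogue of \eqref{5.32} (the paper's \eqref{5.35}), and then expand the cost increment and apply the hypothesis \eqref{3.21}. Your closing caveat about tracking the factors of $\tfrac12$ is well placed, since the paper's display \eqref{3.25} itself is somewhat loose with these constants; otherwise nothing is missing.
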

	\begin{proof}
		For any $(\z,\xi,\W) \in \mathcal{A}_{\text{ad}}$, we have by \eqref{def Q} that, $(\z - \u^*,\xi - \varphi^*,\W - \U^*) \in \mathcal{Q}_{\u^*,\varphi^*,\U^*}$ and  it satisfies:
		\begin{eqnarray}\label{5.34}
		\left\{
		\begin{aligned}
		({\z - \u^*})_t &- \nu \Delta (\z - \u^*) + ((\z - \u^*) \cdot \nabla )\u^* + (\u^* \cdot \nabla )(\z - \u^*) +(\J\ast (\xi - \varphi^*)) \nabla \varphi^*   \\
		&\quad+ (\J\ast \varphi^*) \nabla (\xi - \varphi^*) +\nabla a (\xi - \varphi^*) \varphi^*+ \nabla \widetilde{\uppi}_{(\z - \u^*)} \\&= -((\z - \u^*) \cdot \nabla )(\z - \u^*)- \frac{\nabla a}{2} (\xi - \varphi^*)^2-(\J\ast (\xi - \varphi^*)) \nabla (\xi - \varphi^*) \\&\quad+ \W -\U^*, \ \text{ in }\ \Omega\times(0,T),\\
		{(\xi - \varphi^*)}_t &+ (\z - \u^*) \cdot \nabla (\xi - \varphi^*) + \u^* \cdot\nabla (\xi - \varphi^*) + (\z - \u^*) \cdot \nabla \varphi^* \\&= \Delta \widetilde{\mu},\ \text{ in }\ \Omega\times(0,T), \\ 
		\widetilde{\mu} &= a (\xi - \varphi^*) - \J\ast (\xi - \varphi^*) +\mathrm{F}'(\xi)-\mathrm{F}'(\varphi^*),\\
		\text{div }(\z - \u^*) &= 0,\ \text{ in }\ \Omega\times(0,T), \\
		\frac{\partial \widetilde{\mu}}{\partial\mathbf{n}} &= 0, \ (\z - \u^*)=0,  \ \text{ on } \ \partial \Omega \times (0,T),\\
		(\z - \u^*)(0) &= 0, \ (\xi - \varphi^*)(0) = 0, \ \text{ in } \ \Omega.
		\end{aligned}
		\right.
		\end{eqnarray}
		As we argued in \eqref{3.15}, there exists a $0\leq \widetilde{\theta}\leq 1$ such that 
		\begin{align*}
		\mathrm{F}'(\xi)-\mathrm{F}'(\varphi^*)= \F''(\varphi^*) (\xi - \varphi^*) + \F'''(\varphi^*+\widetilde{\theta}(\xi - \varphi^*)) \frac{(\xi - \varphi^*)^2}{2}.
		\end{align*}
		Now multiplying \eqref{5.34} with $(\p,\eta)$, integrating over $[0,T]$ and then adding, we get  
		\begin{align}
		& \int_0^T ( \z-\u^*,-\p_t- \nu \Delta \p +(\p \cdot \nabla)\u^* + (\u^* \cdot \nabla)\p + \nabla \varphi^* \eta+\nabla q ) \d t \nonumber\\&
		\quad+ \int_0^T ( \xi-\varphi^*, -\eta_t   +\J \ast (\p \cdot \nabla \varphi^*) - (\nabla{\J} \ast  \varphi^*)\cdot \p + \nabla a\cdot \p \varphi^* - \u^* \cdot\nabla \eta - a \Delta \eta)\d t\nonumber\\&\quad  + \int_0^T ( \xi-\varphi^*,\J\ast \Delta \eta - \F''(\varphi^*)\Delta \eta) \d t 
		\nonumber\\&\quad +
		\int_0^T ( ((\z - \u^*) \cdot \nabla )(\z - \u^*) + \frac{\nabla a}{2} (\xi - \varphi^*)^2+(\J\ast (\xi - \varphi^*)) \nabla (\xi - \varphi^*) -\W + \U^*, \p ) \d t \nonumber \\
		&\quad + \int_0^T ((\z - \u^*) \cdot \nabla (\xi - \varphi^*),\eta) \d t  - \int_0^T((\F'''(\varphi^*+\widetilde{\theta}_4(\xi - \varphi^*)) \frac{(\xi - \varphi^*)^2}{2}),\Delta\eta)\d t =0,
		\end{align}
		where we also performed an integration by parts. Since $(\u^*,\varphi^*,\U^*)$ satisfies the first order necessary condition, i.e., $-\p(t) = \U^*(t),  \text{ a.e. } t \in [0,T]$  and using the adjoint system \eqref{adj}, we further get
		\begin{align}\label{5.35}
		& \int_0^T ( \z-\u^*,\u^*- \u_d) ) \d t + \int_0^T ( \xi-\varphi^*, \varphi^* -\varphi_d ) \d t + \int_0^T ( \U^*, \W - \U^* ) \d t \nonumber\\
		&\quad+\int_0^T ( ((\z - \u^*) \cdot \nabla )(\z - \u^*) + \frac{\nabla a}{2} (\xi - \varphi^*)^2+(\J\ast (\xi - \varphi^*)) \nabla (\xi - \varphi^*) , \p ) \d t \nonumber \\
		&\quad + \int_0^T ((\z - \u^*) \cdot \nabla (\xi - \varphi^*),\eta ) \d t -\int_0^T((\F'''(\varphi^*+\widetilde{\theta}(\xi - \varphi^*)) \frac{(\xi - \varphi^*)^2}{2}),\Delta\eta)\d t= 0.
		\end{align} 
		We know that
		\begin{align}\label{3.25}
		&\int_0^T \|(\z - \u_d)(t)\|^2 \d t+  \int_0^T \|(\xi-\varphi_d)(t)\|^2 \d t+ \int_0^T\|\W(t)\|^2\d t \nonumber \\
		&\quad  - \int_0^T \|(\u^*-\u_d)(t)\|^2 \d t-  \int_0^T \|(\varphi^*-\varphi_d)(t)\|^2 \d t- \int_0^T\|\U^*(t)\|^2\d t \nonumber \\
		&\quad = \int_0^T \|\z(t)\|^2 \d t+  \int_0^T \|\xi(t)\|^2 \d t+ \int_0^T\|\W(t)\|^2\d t \nonumber \\
		&\quad +\int_0^T ( \z-\u^*,\u^*- \u_d) ) \d t + \int_0^T ( \xi-\varphi^*, \varphi^* -\varphi_d ) \d t + \int_0^T ( \U^*, \W - \U^* ) \d t \nonumber\\
		&= \int_0^T \|\z(t)\|^2 \d t+  \int_0^T \|\xi(t)\|^2 \d t+ \int_0^T\|\W(t)\|^2\d t \nonumber \\
		&\quad -\int_0^T ( ((\z - \u^*) \cdot \nabla )(\z - \u^*) + \frac{\nabla a}{2} (\xi - \varphi^*)^2+(\J\ast (\xi - \varphi^*)) \nabla (\xi - \varphi^*) , \p ) \d t \nonumber \\
		&\quad - \int_0^T ((\z - \u^*) \cdot \nabla (\xi - \varphi^*),\eta ) \d t +\int_0^T((\F'''(\varphi^*+\widetilde{\theta}(\xi - \varphi^*)) \frac{(\xi - \varphi^*)^2}{2}),\Delta\eta)\d t
		\end{align}
		 Using \eqref{3.21}, we obtain for any $(\z,\xi,\W) \in \mathcal{A}_{\text{ad}}$, the following inequality holds:
		\begin{align*}
		&\int_0^T \|(\z - \u_d)(t)\|^2 \d t+  \int_0^T \|(\xi-\varphi_d)(t)\|^2 \d t+ \int_0^T\|\W(t)\|^2\d t  \geq \\
		&\quad   \int_0^T \|(\u^*-\u_d)(t)\|^2 \d t+  \int_0^T \|(\varphi^*-\varphi_d)(t)\|^2 \d t+ \int_0^T\|\U^*(t)\|^2\d t 
		\end{align*}
		which implies that the triplet $(\u^*,\varphi^*,\U^*)$ is an optimal triplet for the problem \eqref{control problem}. 
	\end{proof}


\begin{thebibliography}{99}	
	    	\bibitem {FART} F. Abergel and R. Temam, \textit{On Some Control Problems in Fluid Mechanics}. Theoretical and Computational Fluid Dynamics {\bf 1} (1990), 303-325.
	
	\bibitem{SA} 	S. Agmon,  \textit{Lectures on Elliptic Boundary Value Problems}, AMS Chelsea Publishing, Providence, RI, 1965.
	
	
	\bibitem{AbEk} J.-P. Aubin and  I. Ekeland,\textit{Applied Nonlinear Analysis}, Dover Publications, New York, 1984.
	
	\bibitem{BDM2}	T. Biswas, S. Dharmatti and M. T. Mohan, Maximum principle and data assimilation problem for the optimal control problems governed by 2D nonlocal Cahn-Hilliard-Navier-Stokes equations, \emph{https://arxiv.org/pdf/1803.11337.pdf}.
	
	\bibitem{exist local chns} F. Boyer, \textit{Mathematical study of multi-phase flow under shear through order parameter formulation}. Asymptotic analysis {\bf 20}(2) (1999), 175-212.
	
	\bibitem{CT} E. Casas and F. Tr\"oltzsch,  \textit{Second-order necessary and sufficient optimality conditions for optimization problems and applications to control theory}. SIAM J. Optim. {\bf 13} (2) (2202), 406-431.
	
	\bibitem{weak} P. Colli,  S. Frigeri  and M. Grasselli,  \textit{Global existence of weak solutions to a nonlocal Cahn–Hilliard Navier–Stokes system}. Journal of Mathematical Analysis and Applications {\bf 386}(1) (2012), 428-444.
	
	\bibitem{analysis} P. Colli,  G. Gilardi,  and J. Sprekels,  \textit{Analysis and optimal boundary control of a nonstandard system of phase field equations}. Milan Journal of Mathematics {\bf 80}(1) (2012), 1-31.
	
	\bibitem{boundary} P.  Colli, and J. Sprekels,  \textit{Optimal boundary control of a nonstandard Cahn-Hilliard system with dynamic boundary condition and double obstacle inclusions, In: Colli P., Favini A., Rocca E., Schimperna G., Sprekels J. (eds) Solvability, Regularity, and Optimal Control of Boundary Value Problems for PDEs}. Springer INdAM Series {\bf 22}, 151-182, Springer, Cham, 2017.
	
	\bibitem{optimal1} P. Colli  and G. Gilardi, \textit{Optimal boundary control of a nonstandard viscous Cahn-Hilliard system with dynamic boundary condition}. SIAM J. Control Optim {\bf 53}(2), 696-721 (2015).
	
	\bibitem{double} P. Colli  and G. Gilardi,  \textit{Distributed optimal control of a nonstandard nonlocal phase field system with double obstacle potential}.  Evolution Equations and Control Theory {\bf 6}, 1-35 (2017)
	
	\bibitem{tumor} P. Colli,  G. Gilardi,  E. Rocca  and J. Sprekels, 	\textit{Optimal distributed control of a diffuse interface model of tumor growth}. Nonlinearity {\bf 30} (2017), 2518-2546.
	
	\bibitem{phase} P. Colli,  G. Gilardi,   and  J. Sprekels, 	\textit{Distributed optimal control of a nonstandard nonlocal phase field system}. AIMS Mathematics {\bf 1}(3)  (2016), 225-260. 
	
	\bibitem{ED}	E.  DiBenedetto,  \textit{Degenerate Parabolic Equations}, Springer-Verlag, New York, 1993.
	
	\bibitem{SDMTS}	S. Doboszczak, M. T. Mohan, and S. S. Sritharan, \textit{Necessary conditions for distributed optimal control of linearized compressible Navier-Stokes equations}. \emph{Submitted}.
	
	\bibitem{lions} D. E. Edmunds,  \textit{Optimal control of systems governed by partial differential equations}. Bulletin of the London Mathematical Society {\bf 4}(2) (1972), 236-237.
	
	
	\bibitem{unique} S. Frigeri,  C. G.  Gal and M. Grasselli, \textit{On nonlocal Cahn–Hilliard–Navier–Stokes systems in two dimensions}. Journal of Nonlinear Science {\bf 26}(4) (2016), 847-893.
	
	
	\bibitem{strong} S. Frigeri,  M. Grasselli,  and P. Krejci,  \textit{Strong solutions for two-dimensional nonlocal Cahn–Hilliard-Navier–Stokes systems}.  Journal of Differential Equations {\bf 255}(9) (2013), 2587-2614.
	
	\bibitem{ControlCHNS} S. Friger, E. Rocca, and J. Sprekels, \textit{Optimal Distributed Control Of a Nonlocal  Cahn–Hilliard Navier–Stokes System in Two Dimension}. SIAM journal of Control and Optimization {\bf 54} (1), 221-250.
	
	\bibitem{SFMG}  S. Frigeri, M. Grasselli and J. Sprekels, \textit{Optimal distributed control of two-dimensional nonlocal Cahn-Hilliard-Navier-Stokes systems with degenerate mobility and singular potential}, \emph{Applied Mathematics {\&} Optimization} (2018), 1-33.
	
	\bibitem{fursikov} A. V. Fursikov,  \textit{Optimal control of distributed systems: Theory and applications}, American Mathemtical Society, Rhode Island, 2000.
	
	\bibitem{num3} H. Garcke, M. Hinze and C. Kahle, \textit{Optimal Control of time-discrete two-phase flow driven by a diffuse-interface model}, arXiv preprint arXiv:1612.02283, 2016.
	
	\bibitem{num2} H. Garcke, C. Hecht, M. Hinze and C. Kahle, \textit{Numerical approximation of phase field based shape and topology optimization for fluids}. SIAM Journal on Scientific Computing {\bf 37}(4) (2015), A1846-A1871.
	
	\bibitem {gunzburger} M. D. Gunzburger,  \textit{Perspectives in Flow Control and Optimization}. SIAM's Advances in Design and Control series, Philadelphia, 2003.
	
	\bibitem{num4} M. Hintermuller and D. Wegner, \textit{Optimal control of a semidiscrete Cahn-Hilliard-Navier-Stokes system}. SIAM Journal on Control and Optimization {\bf 52}(1)(2014), 747-772.
	
	\bibitem{num5} M. Hintermüller and D. Wegner, \textit{Distributed and boundary control problems for the semidiscrete Cahn-Hilliard/Navier-Stokes system with nonsmooth Ginzburg-Landau energies}. Topological Optimization and Optimal Transport, Radon Series on Computational and Applied Mathematics {\bf 17} (2017), 40-63.
	
	\bibitem{num1} T. Keil, M. Hintermüller and D. Wegner, \textit{Optimal control of a semidiscrete Cahn-Hilliard-Navier-Stokes system with non-matched fluid densities}. (2017).
	
	\bibitem{OAL}	O. A. Ladyzhenskaya, \textit{The Mathematical Theory of Viscous Incompressible Flow}. Gordon and Breach, New York, 1969.
	
	\bibitem{LijuanPezije}  W. Lijuan and H. Pezjie, \textit{Second order optimality conditions for optimal control problems governed by 3-dimensional Navier-Stokes equations}. Acta Mathematica Scientia {\bf 26}(4) (2006), 729-734.	
	
	\bibitem{robust} T. Mejdo, \textit{Robust Control of a Cahn-Hilliard-Navier-Stokes Model}. Communications on Pure and Applied Analysis {\bf 15.6} (2016).
		
	\bibitem{state const} T. Medjo, \textit{Optimal control of a Cahn–Hilliard–Navier–Stokes model with state constraints} J. Convex Anal {\bf 22} (2015), 1135-1172.
	
	\bibitem{LN}  L. Nirenberg,  \textit{On elliptic partial differential equations}. Ann. Scuola Norm. Sup. Pisa {\bf 3} 13 (1959), 115-162.
	
	
	\bibitem{raymond} J. P. Raymond,  \textit{Optimal control of partial differential equations}. Universit\'e Paul Sabatier, Lecture Notes, 2013.
	
	\bibitem{raymond2}  J. P. Raymond, \textit{Boundary feedback stabilization of the two dimensional Navier-Stokes equations}. SIAM J. Control and Optimiztion {\bf 45} (2006), 790-828.
	
		\bibitem{SJ}   J.  Simon, \textit{Compact sets in the space $\mathrm{L}^p(0,T;\mathrm{B})$}. Annali di Matematica Pura ed Applicata {\bf 146}  (1986), 65-96.
	
	\bibitem{sritharan} S. S. Sritharan, \textit{Optimal Control of Viscous Flow}. SIAM Frontiers in Applied Mathematics, Piladelphia, Society for Industrial and Applied Mathematics, 1998.
	
	\bibitem{Te} R. Temam,  \textit{Navier-Stokes Equations, Theory and Numerical
		Analysis}. North-Holland, Amsterdam, 1984.
		
		\bibitem{TW} F. Tr\"oltzsch and D. Wachsmuth,	\textit{Second-order sufficient optimality conditions for the optimal control of Navier-Stokes equations}. ESAIM: Control, Optimisation and Calculus of Variations {\bf  12} (2006), 93-119.
	
	\bibitem{WD} D. Wachsmuth,	\textit{Sufficient second-order optimality conditions for convex control constraints}. J. Math. Anal. Appl. {\bf 319} (2006) 228-247.
	
	\bibitem{optimal control for ch eq} X. Zhao  and  C. Liu, \textit{ Optimal control problem for viscous Cahn–Hilliard equation}.  Nonlinear Analysis: Theory, Methods and Applications {\bf 74}(17) (2011), 6348-6357.
	
	\bibitem{optimal control for ch eq with state constraints} J. Zheng  and Y. Wang,  \textit{Optimal control problem for Cahn–Hilliard equations with state constraint}.  Journal of Dynamical and Control Systems {\bf 21}(2) (2015), 257-272.
	\end{thebibliography}
	 \end{document}